\documentclass[11pt,reqno]{amsart}
\usepackage[utf8]{inputenc}
\usepackage{amsmath,amssymb}
\usepackage{hyperref}
\usepackage{mathtools}
\usepackage{cite}
\usepackage{tikz}
\usetikzlibrary{automata}

\usepackage{mathrsfs}

\theoremstyle{plain}
\newtheorem{thm}{Theorem}[section]
\newtheorem{ddef}[thm]{Definition}
\newtheorem{cor}[thm]{Corollary}
\newtheorem{lem}[thm]{Lemma}
\newtheorem{prop}[thm]{Proposition}

\theoremstyle{remark}

\theoremstyle{definition}
\newtheorem{example}[thm]{Example}

\newcommand{\malcev}{\mathbin{\hbox{$\bigcirc$\rlap{\kern-8.25pt\raise0,50pt\hbox{${\tt
  m}$}}}}}
\newcommand{\smalcev}{\mathbin{\hbox{$\bigcirc$\rlap{\kern-7pt\raise0,30pt\hbox{${\tt
  m}$}}}}}

\newcommand{\HH}{\mathrel{\mathscr H}}

\newcommand{\soc}{\mathop{\mathrm{soc}}\nolimits}
\newcommand{\rad}{\mathop{\mathrm{rad}}\nolimits}

\usepackage{listings}
\lstdefinelanguage{GAP}{%
  morekeywords={%
    Assert,Info,IsBound,QUIT,%
    TryNextMethod,Unbind,and,break,%
    continue,do,elif,%
    else,end,false,fi,for,%
    function,if,in,local,%
    mod,not,od,or,%
    quit,rec,repeat,return,%
    then,true,until,while%
  },%
  sensitive,%
  morecomment=[l]\#,%
  morestring=[b]",%
  morestring=[b]',%
}[keywords,comments,strings]
\lstdefinestyle{sharpc}{language=[Sharp]C, frame=lr, rulecolor=\color{blue!80!black}}
\usepackage[T1]{fontenc}
\usepackage{xcolor}
\usepackage{bookmark}
\begin{document}
\title[Homogeneous Weights on Finite Semigroup Algebras]{Homogeneous Weights on Semigroup Algebras of Finite Commutative Semigroups}
\author{M.H. Shahzamanian}
\address{M.H. Shahzamanian\\ Centro de Matem\'atica e Departamento de Matem\'atica, Faculdade de Ci\^{e}ncias,
Universidade do Porto, Rua do Campo Alegre, 687, 4169-007 Porto, Portugal}
\email{m.h.shahzamanian@gmail.com}
\subjclass[2010]{20M25,20M10,16Y60}
\keywords{semigroup determinant, Frobenius algebra, homogeneous weight, twisted contracted semigroup algebra, finite semigroup}

\begin{abstract}
The determinant of a finite semigroup is closely related to the Frobenius property of its corresponding  semigroup algebra, and it plays a significant role in coding theory applications. 
In this paper, we study determinants of twisted contracted semigroup algebras over finite fields. 
We extend Steinberg's determinant theory to this setting by establishing a Frobenius criterion for twisted contracted semigroup algebras over finite fields. 
We also develop a structural theory for two-generated twisted contracted monoid algebras. 
In particular, we identify classes for which the determinant of the associated matrix depends only on its zero--nonzero pattern, and is therefore independent of the values of the cocycle. 
Consequently, it suffices to verify the determinant condition for the trivial character, from which the corresponding result for all characters follows. 
Under suitable assumptions, these semigroups are completely determined by a small set of defining identities. 
Motivated by the character-theoretic construction of homogeneous weights on finite Frobenius rings, we construct homogeneous weights for Frobenius twisted contracted semigroup algebras and investigate the non-Frobenius case.
In particular, we characterize the principal ideals on which the character-average weight satisfies the averaging property of homogeneous weights and thereby extend the classical notion of homogeneous weight to a prescribed family of principal ideals.
\end{abstract}

\maketitle

\section{Introduction}

The determinant of a finite group was introduced by Dedekind in the 1880s and
subsequently studied by Frobenius. At about the same time, Smith considered the
determinant of a \(G\times G\) matrix whose \((g,h)\)-entry is \(x_{gh}\), where
\(G\) is a finite group and \(x_k\) is an indeterminate associated with
\(k\in G\) \cite{Smith}. The notion of determinant was later extended from
groups to finite semigroups, and several aspects of the resulting semigroup
determinant have been investigated
\cite{Lindstr,Wilf,Wood}.

Besides its intrinsic algebraic interest, the semigroup determinant has important applications in coding theory. In particular, the extension of the MacWilliams theorem from finite fields to finite Frobenius rings is closely related to the non-vanishing of semigroup determinants \cite{Wood-Duality}. A major advance in the theory was made by Steinberg~\cite{Ste-Fac-det}, who developed a general theory of determinants of finite semigroups, established a close connection between semigroup determinants and Frobenius algebras, and obtained explicit factorizations and determinant formulas for several classes of semigroups, including finite commutative semigroups.
The relationship between the determinant of a finite-dimensional algebra and its Frobenius property is classical; see, for example, \cite[Chapter~16]{Okn}.

The computation of semigroup determinants has been investigated for several
families of finite semigroups. Building on Steinberg's general theory
\cite{Ste-Fac-det}, explicit determinant formulas have been obtained for
various classes of semigroups in
\cite{Sha-Det,Sha-Det2,Sha-Det3,Sha-Det4}. The present paper continues this line of
research by studying twisted contracted semigroup algebras over finite fields
and relating their determinants to homogeneous weights.

Homogeneous weights on finite Frobenius rings play a fundamental role in coding
theory. Introduced by Constantinescu and Heise for residue class rings and
subsequently generalized to arbitrary finite rings, they provide an important
tool in the study of linear codes over finite rings. Character-theoretic
descriptions of homogeneous weights have also been extensively investigated,
notably by Gluesing-Luerssen~\cite{Gluesing-Luerssen2016}.

The present paper is motivated by bringing together two lines of research,
namely semigroup determinants and homogeneous weights, in the setting of
twisted contracted semigroup algebras over finite fields. While
Steinberg's theory is developed over the complex field, our work is carried out
over finite fields. This not only requires adapting the determinant theory to
our setting, but also has the advantage that the resulting twisted contracted
semigroup algebras become finite rings. Consequently, they lie within the
framework of coding theory, allowing the study of homogeneous weights and
providing a foundation for future applications to linear codes over finite
rings.

Our first objective is to adapt Steinberg's determinant theory to twisted contracted semigroup algebras over finite fields. We establish the corresponding Frobenius criterion in this setting. 
We also investigate two-generated twisted contracted monoid algebras, identifying classes for which the Frobenius criterion is independent of the cocycle. We identify broad classes for which the determinant of the associated matrix depends only on its zero--nonzero pattern and is therefore independent of the cocycle defining the twisted semigroup algebra. Consequently, the Frobenius criterion reduces to verifying the determinant condition for the trivial character, from which the corresponding result for all characters follows automatically. Under suitable assumptions, we further show that these semigroups are completely determined by a small collection of defining identities.

Our second objective is to investigate homogeneous weights on these algebras via the character-average construction. We prove that, whenever the algebra is Frobenius, the resulting weight is a homogeneous weight in the classical sense. We then study the non-Frobenius case and show that, although condition {\rm(iii)} of the definition of a homogeneous weight does not hold on every nonzero principal ideal, it holds precisely on those principal ideals containing the minimal ideal generated by the zero element. This naturally leads to an extension of the classical notion of homogeneous weight to a prescribed family of principal ideals, which agrees with the classical definition in the Frobenius case.

The paper is organized as follows. In Section~\ref{Prelim}, we recall the
necessary background on based algebras, Frobenius algebras, semigroup
determinants, and homogeneous weights. In
Section~\ref{SemAlgebraComSem}, we adapt Steinberg's determinant results to
twisted contracted semigroup algebras over finite fields, establish the
corresponding Frobenius criterion, obtain explicit determinant formulas, and
construct the associated homogeneous weights in the Frobenius case. In
Section~\ref{Two-G-A}, we investigate two-generated twisted contracted monoid
algebras and obtain structural results that completely determine these
semigroups under suitable assumptions. Finally, in
Section~\ref{SecWeight}, we study the non-Frobenius case, characterize the
principal ideals on which the character-average weight satisfies
condition~{\rm(iii)}, and extend the classical notion of homogeneous weight to
a prescribed family of principal ideals.


\section{Preliminaries}\label{Prelim}
\subsection{Semigroups}

For standard notation and terminology regarding semigroups, we refer the reader to~\cite[Chap.~5]{Alm}, \cite[Chaps.~1--3]{Cli-Pre}, and~\cite[Appendix~A]{Rho-Ste}.

Let \( S \) be a finite semigroup, and let \( a, b \in S \). Green’s relations, originally introduced by Green~\cite{Gre}, are defined as follows:
\begin{itemize}
    \item \( a \mathrel{\mathcal{R}} b \) if and only if \( aS^1 = bS^1 \),
    \item \( a \mathrel{\mathcal{L}} b \) if and only if \( S^1a = S^1b \),
    \item \( a \mathrel{\mathcal{H}} b \) if and only if \( a \mathrel{\mathcal{R}} b \) and \( a \mathrel{\mathcal{L}} b \),
    \item \( a \mathrel{\mathcal{J}} b \) if and only if \( S^1aS^1 = S^1bS^1 \).
\end{itemize}
If \( S \) contains an identity element, then we set \( S^1 = S \). Otherwise, we define \( S^1 = S \cup \{1\} \), where \( 1 \) acts as an identity. 
The \( \mathcal{R}, \mathcal{L}, \mathcal{H}, \mathcal{J} \)-classes of an element \( a \in S \) are denoted by \( R_a, L_a, H_a \), and \( J_a \), respectively.

We also consider a refinement of the \( \mathcal{L} \)-relation, introduced by Fountain et al.~\cite{Fou-Gom-Gou}. We write \( a \mathrel{\widetilde{\mathcal{L}}} b \) if and only if \( a \) and \( b \) share the same set of idempotent right identities, that is,
\[
ae = a \quad \text{if and only if} \quad be = b.
\]
The dual relation \( \widetilde{\mathcal{R}} \) is defined analogously, and we set \( \widetilde{\mathcal{H}} = \widetilde{\mathcal{L}} \cap \widetilde{\mathcal{R}} \). The corresponding equivalence classes of an element \( s \in S \) are denoted by \( \widetilde{L}_s, \widetilde{R}_s \), and \( \widetilde{H}_s \), respectively. For additional background, see~\cite{Lawson-Sem-Cat}.

An element \( e \in S \) is called \emph{idempotent} if \( e^2 = e \). The set of all idempotents in \( S \) is denoted by \( E(S) \). 



\subsection{Frobenius algebras}
In modern terms, a finite-dimensional algebra $\Lambda$ over a field $K$ is called \emph{Frobenius} if there exists
a $K$-linear map $\lambda : \Lambda \to K$ such that $\ker(\lambda)$ contains no nonzero left or right ideal
\cite[Definition 1.6]{Benson-Rep-coh}.
A $K$-linear map $\lambda : \Lambda \to K$ satisfying this property is called a \emph{Frobenius form}.
The definition of a Frobenius algebra is equivalent to the existence of a left $\Lambda$-module
isomorphism
\[
{}_{\Lambda}\Lambda \;\cong\; \mathrm{Hom}_K(\Lambda_{\Lambda},K)
\]
\cite[Theorem~3.15]{LamTY}.
Equivalently, there is an isomorphism of right $\Lambda$-modules
\[
\Lambda_{\Lambda} \;\cong\; \mathrm{Hom}_K({}_{\Lambda}\Lambda,K),
\]
see, for example, \cite[Exercise~31.(5.3)]{MR1245487}.

The following proposition then easily follows:
\begin{prop}\label{DirectProLambda}
Let $R_1, \dots, R_t$ be finite-dimensional Frobenius $K$-algebras
with Frobenius forms $\lambda_1, \dots, \lambda_t$, respectively.
Then the direct product
\[
R = R_1 \times \cdots \times R_t
\]
is a Frobenius algebra with Frobenius form
\[
\lambda(a_1,\dots,a_t) = \sum_{i=1}^t \lambda_i(a_i),
\]
where each $\lambda_i$ is extended to $R$ via the projection onto the $i$-th component.
\end{prop}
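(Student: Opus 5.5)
The plan is to verify the defining property directly: $\lambda$ is $K$-linear, and its kernel contains no nonzero left or right ideal of $R$. Linearity is immediate, since $\lambda$ is the sum of the compositions $\lambda_i\circ\pi_i$, where $\pi_i\colon R\to R_i$ is the projection. Each $\pi_i$ is a $K$-algebra homomorphism and each $\lambda_i$ is $K$-linear. Also, $R$ is finite-dimensional because each $R_i$ is.

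The key step is to show that if $I\subseteq \ker\lambda$ is a left ideal of $R$, then $I=0$; the right-ideal case is symmetric. Let $\varepsilon_i=(0,\dots,0,1,0,\dots,0)$ be the central idempotent with $1$ in the $i$-th slot. For a left ideal $I$ of $R$, set $I_i=\pi_i(I)$. Then $I_i$ is a left ideal of $R_i$: for $r\in R_i$ and $a\in I$, lift $r$ to $\tilde r\in R$ with $\pi_i(\tilde r)=r$, so that $r\pi_i(a)=\pi_i(\tilde r a)\in I_i$. Moreover $\varepsilon_i a\in I$ for every $a\in I$, and $\varepsilon_i a$ has $a_i$ in position $i$ and zeros elsewhere. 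Hence $\lambda(\varepsilon_i a)=\lambda_i(a_i)$.

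Since $I\subseteq\ker\lambda$, we get $\lambda_i(a_i)=0$ for all $a_i\in I_i$, so $I_i\subseteq\ker\lambda_i$. Because $\lambda_i$ is a Frobenius form, $I_i=0$ for every $i$. An element of $I$ has every component zero, so $I=0$. The same argument with right multiplication by $\varepsilon_i$ handles right ideals.

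There is no genuine obstacle. The only point needing care is that $I$ itself need not be a product of ideals a priori. This is handled by multiplying by the idempotents $\varepsilon_i$, which isolates each component inside $I$ before applying $\lambda$.
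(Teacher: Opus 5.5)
Your proof is correct. The paper does not actually prove this proposition. It only says it ``easily follows'', placed right after the characterization ${}_\Lambda\Lambda\cong\mathrm{Hom}_K(\Lambda_\Lambda,K)$.

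That placement suggests the intended route is to take the direct sum of the isomorphisms ${}_{R_i}R_i\cong\mathrm{Hom}_K(R_i,K)$, giving ${}_RR\cong\mathrm{Hom}_K(R,K)$. This shows at once that $R$ is Frobenius. To identify the specific form $\lambda=\sum_i\lambda_i\circ\pi_i$, however, one also needs to note that a Frobenius form is the image of $1$ under such an isomorphism, and that $(1,\dots,1)$ is sent to $\sum_i\lambda_i\circ\pi_i$. Your direct check on $\ker\lambda$ produces the explicit form with no module theory at all.

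The one hypothesis doing real work in your argument is unitality. The central idempotents $\varepsilon_i$ exist because each $R_i$ has an identity, which the paper's convention for Frobenius algebras guarantees. This is not a mere convenience. Take two copies of the one-dimensional algebra $Ka$ with $a^2=0$ and $\lambda_i(a)=1$. Each $\ker\lambda_i$ is zero, yet $(a,-a)\in\ker\lambda$, and it spans a nonzero ideal of the product because all products vanish.
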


%
%
%
%
%

A ring $R$ is called \emph{self-injective} if it is injective as a right regular $R_R$-module.
Following Lam \cite[Ch.~15--16]{LamTY}, a ring $R$ is called
\emph{quasi-Frobenius (QF)} if it is right noetherian and right self-injective.
An artinian QF ring $R$ is said to be \emph{Frobenius} if its socle
is isomorphic to its semisimple top as a right $R$-module, that is,
\[
\soc(R_R)_R \cong (R/\rad(R))_R,
\]
where $\soc(R_R)$ denotes the socle of the right $R$-modules $R$ and $\rad(R)$ is the Jacobson radical of $R$.
Equivalently, one may require
\[
{}_R\soc({}_R R) \cong {}_R(R/\rad(R)),
\]
since for a QF ring the left and right conditions are equivalent.
If $R$ is a finite-dimensional algebra over a field $K$, then
$R$ is a Frobenius ring if and only if it is a Frobenius $K$-algebra
\cite[Theorem~16.21]{LamTY}.  It shows that the
property of $R$ being a Frobenius $K$-algebra is actually independent of $K$
(as long as $R$ is finite-dimensional over $K$).

A character $\chi \in \widehat{R} = \operatorname{Hom}_{\mathbb{Z}}(R,\mathbb{C}^\times)$ is called \emph{generating} if $\widehat{R} = R\chi$, or equivalently, if $\ker \chi$ contains no nonzero left ideal of $R$; see, for example, \cite{Hon-Thom2001}.

Let $R$ be a finite Frobenius $K$-algebra with Frobenius form $\lambda : R \to K$, and let $\psi : K \to \mathbb{C}^\times$ be a nontrivial additive character. Since $R$ is a Frobenius ring, $\widehat{R}$ is a cyclic left $R$-module, and hence $R$ admits a generating character. In particular, there exists $u \in R$ such that the character
\[
\chi_u : R \to \mathbb{C}^\times, \qquad \chi_u(r) = \psi(\lambda(ur))
\]
is a generating character of $R$. 
Thus, a Frobenius form on $R$ naturally gives rise to a generating character of $R$.
While Frobenius algebras are classically studied via Frobenius forms, generating characters are fundamental in the study of finite Frobenius rings, particularly in coding theory. The above construction provides a bridge between these two frameworks.

The following proposition is useful for constructing generating characters of direct products of finite Frobenius rings.

\begin{prop}
Let 
\[
R = R_1 \times \cdots \times R_t,
\]
where each $R_i$ is a finite Frobenius ring. For each $1 \le i \le t$, let $\chi_i$ be a generating character of $R_i$. 

Then $R$ is a Frobenius ring and there is a natural isomorphism
\[
\widehat{R} \cong \widehat{R_1} \times \cdots \times \widehat{R_t}.
\]
Moreover, a generating character $\chi$ of $R$ is given by
\[
\chi(a_1,\dots,a_t)
=
\prod_{i=1}^t \chi_i(a_i),
\qquad (a_1,\dots,a_t) \in R.
\]

\end{prop}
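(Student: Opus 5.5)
We argue directly from the definition of a generating character: $\chi\in\widehat{R}$ is generating if and only if $\ker\chi$ contains no nonzero left ideal of $R$. We take this criterion as the working definition throughout, and we handle the three claims in turn.

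\emph{The isomorphism of character groups.} The additive group of $R$ is the direct sum of the additive groups of the $R_i$. So the map
\[
\widehat{R_1}\times\cdots\times\widehat{R_t}\to\widehat{R},\qquad (\chi_1,\dots,\chi_t)\mapsto\Bigl((a_1,\dots,a_t)\mapsto\prod_i\chi_i(a_i)\Bigr)
\]
is a group isomorphism. Its inverse sends $\chi$ to the family of restrictions $\chi|_{R_i}$, where $R_i$ is embedded as the $i$-th coordinate subgroup. The left $R$-module structure on $\widehat{R}$ is $(r\chi)(x)=\chi(xr)$. Multiplication in $R$ is componentwise, so this isomorphism is compatible with that action, with $R$ acting on the product factorwise. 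Hence it is an isomorphism of left $R$-modules, which is the natural isomorphism we need.

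\emph{The product character is generating.} Let $\chi=\prod_i\chi_i$ and let $I\le R$ be a left ideal with $I\subseteq\ker\chi$. Let $e_i$ be the central idempotent $(0,\dots,1,\dots,0)$. Then $e_iI\subseteq I$, and $e_iI$ is a left ideal of $R_i$, since it equals the projection $\pi_i(I)$ placed in the $i$-th coordinate. For $x\in e_iI$ we have $\chi(x)=\chi_i(\pi_i(x))$, so $\pi_i(I)\subseteq\ker\chi_i$. Because $\chi_i$ is generating, $\pi_i(I)=0$ for every $i$, and therefore $I=\sum_i e_iI=0$. The same argument with right ideals, or the equivalence $\widehat{R}=R\chi$ recalled above, shows that $\chi$ generates $\widehat{R}$ as a left module.

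\emph{$R$ is Frobenius.} This follows from the existence of a generating character: a finite ring with a generating character is Frobenius (Wood's/Honold's characterization, as cited via \cite{Hon-Thom2001}). Alternatively, we can argue structurally. Each $R_i$ is QF and satisfies $\soc(R_i)\cong R_i/\rad(R_i)$. Both socle and radical commute with finite direct products, and finite products of QF rings are QF, so the Frobenius condition holds componentwise and therefore for $R$. In the algebra setting, Proposition~\ref{DirectProLambda} gives this directly.

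The only step needing care is the reduction $e_iI=\pi_i(I)$. It uses that the $e_i$ are central, which holds because they are coordinate idempotents of a product. Beyond that the argument is bookkeeping.
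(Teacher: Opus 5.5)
Your proof is correct. The paper does not prove this proposition at all. It is stated as a standard fact, as is Proposition~\ref{DirectProLambda}, and implicitly rests on the characterization in \cite{Hon-Thom2001} that a finite ring is Frobenius if and only if it admits a generating character. So your write-up fills in what the paper leaves to the reader rather than taking a different route.

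Three things are worth tightening.

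\emph{Validity of the kernel criterion.} You adopt the kernel criterion as your working definition before knowing that $R$ is Frobenius, so you should say why it is valid for an arbitrary finite ring. The map $r\mapsto r\chi$ is a left $R$-module map $R\to\widehat{R}$. Its kernel $\{r\in R \mid Rr\subseteq\ker\chi\}$ is the largest left ideal contained in $\ker\chi$. Since $|R|=|\widehat{R}|$, this map is onto exactly when that kernel is zero. This is what makes the deduction ``$\chi$ generating $\Rightarrow$ $R$ Frobenius'' non-circular.

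\emph{The aside on right ideals.} Your sentence about ``the same argument with right ideals'' would give $\widehat{R}=\chi R$, that is, right generation, not $\widehat{R}=R\chi$. It is not needed.

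\emph{A shorter route to generation.} You can bypass the idempotent argument entirely by using your first part. Under the module isomorphism, $r\chi$ corresponds to $(r_1\chi_1,\dots,r_t\chi_t)$. Since the $r_i$ vary independently, $R\chi$ corresponds to $\prod_i R_i\chi_i=\prod_i\widehat{R_i}$, which is all of $\widehat{R}$.
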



\subsection{The determinant of a based algebra}
For standard notation and terminology relating to finite-dimensional algebras, the reader is referred to \cite{Assem-Ibrahim, Benson-Rep-coh}.

Let $K$ be a field. By a \emph{based algebra} we mean a finite-dimensional $K$-algebra $A$ together 
with a distinguished basis $B$. We often write $(A,B)$ for this pair. 

The multiplication on $A$ is determined by its structure constants with respect to $B$, defined by
\[
bb' = \sum_{b'' \in B} c_{b'',b,b'}\, b'',
\]
where $b,b' \in B$ and $c_{b'',b,b'} \in K$.

Let $X_B = \{x_b \mid b \in B\}$ be a set of variables in bijection with $B$. 
The \emph{Cayley table} of $(A,B)$ is the $B \times B$ matrix over $K[X_B]$ given by
\[
C(A,B)_{b,b'} = \sum_{b'' \in B} c_{b'',b,b'}\, x_{b''}.
\]

Matrices obtained by specializing $C(A,B)$ at elements of $K^B$ are called 
\emph{paratrophic matrices} (following Frobenius). Thus, $C(A,B)$ is the generic 
paratrophic matrix.

We define
\[
\theta_{(A,B)} = \det C(A,B)
\]
as an element of $K[X_B]$. Then $\theta_{(A,B)}$ is either identically zero 
or a homogeneous polynomial of degree $|B|$.
We say that $\theta_{(A,B)}$ is \emph{identically zero over $K$}, and write
\[
\theta_{(A,B)} \equiv 0 \quad \text{in } K[X_B],
\]
if it is the zero polynomial in $K[X_B]$, that is, if all of its coefficients vanish in $K$. 
Otherwise, we say that $\theta_{(A,B)}$ is \emph{not identically zero}.

Let $S$ be a finite semigroup. The semigroup $K$-algebra $KS$ consists of all formal sums
\[
\sum_{s \in S} \lambda_s s,
\]
where $\lambda_s \in K$. Addition is defined componentwise, and multiplication is induced by the
semigroup operation on $S$, namely
\[
\left( \sum_{s \in S} \lambda_s s \right)
\left( \sum_{t \in S} \mu_t t \right)
=
\sum_{s,t \in S} \lambda_s \mu_t (st).
\]
Then $KS$ is a finite-dimensional $K$-algebra with basis $S$.

Let $X_S = \{ x_s \mid s \in S \}$ be a set of variables in bijection with $S$.
The multiplication of $S$ can be encoded in the \emph{Cayley matrix}
\[
C(S) = \big( x_{st} \big)_{s,t \in S},
\]
which is an $|S| \times |S|$ matrix with entries in the polynomial ring $K[X_S]$.

The polynomial $\theta_S$, in the case $K = \mathbb{C}$, is called the 
\emph{semigroup determinant} (or \emph{Dedekind--Frobenius semigroup determinant}) of $S$. 
We shall continue to refer to $\theta_S$ as the \emph{semigroup determinant} for an arbitrary fixed field $K$.
For more details, see \cite{Frobenius1903theorie}, \cite[Chapter~16]{Okn}, and \cite{Ste-Fac-det}.

As we mentioned on the previous subsection, a finite-dimensional unital $K$-algebra $A$ is called \emph{Frobenius} if there exists 
a linear map $\lambda : A \to K$ (called a Frobenius form) such that the bilinear form
\[
(a,b) \mapsto \lambda(ab)
\]
is nondegenerate. Equivalently, $\ker \lambda$ contains no nonzero left or right ideal of $A$.

By \cite[Theorem~2.1]{Ste-Fac-det}, the semigroup determinant $\theta_{(A,B)}$ is nonzero
if and only if the based algebra $(A,B)$ is a Frobenius algebra, for the case $K=\mathbb{C}$.
This result may be viewed as an instance of Frobenius’s criterion
(cf.\ \cite[§16.82]{LamTY}), which predates the modern definition of Frobenius algebras.
Theorem~2.1 of \cite{Ste-Fac-det} also holds over any infinite field $K$.
For finite fields, one must ensure that the nonzero polynomial $\theta_{(A,B)}$
admits a nonzero specialization over $K$; this is guaranteed, for example, if
$|K| > |B|$.

\begin{thm}\label{DetAB}
Let $(A,B)$ be a based algebra over a field $K$. Assume that either $K$ is infinite or $|K| > |B|$. Then
\[
\theta_{(A,B)} \not\equiv 0
\]
if and only if $A$ is a Frobenius algebra (and, in particular, unital).
\end{thm}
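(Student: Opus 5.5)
The proof rests on one identity: for $a=\sum_b \alpha_b b\in A$, the specialization of the Cayley table at $x_b=\lambda(b)$ is the Gram matrix of the bilinear form $(u,v)\mapsto\lambda(uv)$ in the basis $B$. Here $\lambda:A\to K$ is the linear functional with $\lambda(b)=\alpha_b$; more precisely we take $\lambda$ to be the functional whose values $\lambda(b'')$ we substitute for $x_{b''}$. Indeed, for $b,b'\in B$,
\[
C(A,B)_{b,b'}\big|_{x_{b''}=\lambda(b'')}=\sum_{b''}c_{b'',b,b'}\lambda(b'')=\lambda(bb').
\]
So evaluating $\theta_{(A,B)}$ at the point $(\lambda(b''))_{b''\in B}\in K^B$ gives $\det(\lambda(bb'))_{b,b'\in B}$. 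This determinant is nonzero exactly when the pairing $(u,v)\mapsto\lambda(uv)$ is nondegenerate. Every point of $K^B$ arises this way, since $\lambda$ ranges over all of $\operatorname{Hom}_K(A,K)$. Hence $\theta_{(A,B)}$ has a nonzero value at some point of $K^B$ if and only if $A$ admits a functional $\lambda$ with nondegenerate form $\lambda(uv)$.

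Next I connect nondegeneracy with the Frobenius definition. If $\lambda(uv)$ is nondegenerate and $I$ is a left ideal contained in $\ker\lambda$, take $v\in I$. Then $uv\in I$ for all $u$, so $\lambda(uv)=0$ for all $u$, which forces $v=0$; the same argument works for right ideals. Conversely, suppose $\ker\lambda$ contains no nonzero one-sided ideal. If $v$ is in the right radical of the form, then $Av\subseteq\ker\lambda$ and $Av$ is a left ideal, so $Av=0$. The remaining step is to get from $Av=0$ to $v=0$, and this needs unitality.

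The unitality claim is the step I expect to be the main obstacle, since the statement asserts that $A$ is unital. Suppose the form $\lambda(uv)$ is nondegenerate. Then the map $A\to A^*$, $v\mapsto\lambda(-\,v)$, is injective and hence bijective by dimension count. So there is $e\in A$ with $\lambda(ue)=\lambda(u)$ for all $u$. I then show $e$ is a right identity. For all $w,u$ we have $\lambda(w(ue))=\lambda((wu)e)=\lambda(wu)$, so $\lambda(w(ue-u))=0$ for every $w$, and nondegeneracy gives $ue=u$. Symmetrically, there is a left identity $f$, and then $e=fe=f$. Thus nondegeneracy implies unital Frobenius. In the Frobenius direction, the definition already includes unitality, so $1\cdot v=v$ finishes the converse above, and the Gram matrix is invertible.

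Finally, I pass between "some nonzero value on $K^B$" and "$\theta_{(A,B)}\not\equiv0$". If $A$ is Frobenius, $\theta_{(A,B)}$ takes a nonzero value and is therefore not the zero polynomial; this direction uses no hypothesis on $K$. For the other direction, suppose $\theta_{(A,B)}$ is a nonzero polynomial. Either it is identically zero or homogeneous of degree $|B|$, so here each variable occurs with degree at most $|B|$. If $K$ is infinite, a nonzero polynomial has a nonvanishing point. If $|K|>|B|$, the Combinatorial Nullstellensatz/Schwartz--Zippel argument gives the same: induct on the number of variables, and use that a nonzero one-variable polynomial of degree at most $|B|<|K|$ cannot vanish on all of $K$. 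The resulting point $(\lambda(b))$ defines a Frobenius form, which completes the proof.
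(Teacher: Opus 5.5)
Your proof is correct and takes the same route as the paper, which defers to Steinberg's argument: specializing the Cayley table at a functional $\lambda$ yields the Gram matrix of $(u,v)\mapsto\lambda(uv)$, and unitality is extracted from nondegeneracy. The paper then adds exactly your finite-field step, stating it as a lemma that a nonzero polynomial of total degree $d<|K|$ has a nonvanishing point; your induction with per-variable degree at most $|B|$ is an equivalent way to get that point.
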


\begin{proof}
The proof follows the same lines as that of \cite[Theorem~2.1]{Ste-Fac-det} in the case of infinite fields. 

For finite fields, the only additional point is to ensure the existence of a specialization 
$\lambda : B \to K$ such that $\theta_{(A,B)}(\lambda) \neq 0$. 
This is guaranteed by the following lemma.
\begin{lem}\label{OrderKAndx1xn}
Let $K$ be a finite field with $|K| = q$, and let $f \in K[x_1,\dots,x_n]$ be a nonzero polynomial of total degree $d$. 
If $q > d$, then there exists $a \in K^n$ such that $f(a)\neq 0$.
\end{lem}
Since $\theta_{(A,B)}$ is a nonzero polynomial of degree $|B|$, the condition $|K| > |B|$ ensures the existence of such a specialization. 
The remainder of the argument is identical.
\end{proof}


Suppose that $S$ has a zero element $\{z\}$.
Let \( \widetilde{X} = X_{S \setminus \{z\}} \) if \( S \) is understood.  
Proposition~2.7 of \cite{Ste-Fac-det} (see also \cite{Wood}) extends verbatim to an arbitrary field $K$, as its proof relies only on formal manipulations of structure constants.
In particular, it establishes a connection between the contracted semigroup determinant and the semigroup determinant of a semigroup $S$ with a zero element.
There is a $K$-algebra isomorphism
\[
KS \cong K_0S \times K z,
\]
sending $s \in S \setminus \{z\}$ to $(s,0)$ and $z$ to $(0,z)$.
Put \( y_s = x_s - x_z \) for \( s \neq z \) and let \( Y = \{ y_s \mid s \in S \setminus \{z\} \} \).  
Then  
\[
\theta_S(X) = x_z \widetilde{\theta}_S(Y).
\]  





For a semigroup $S$, a \emph{twisted semigroup algebra}
of $S$ over a field $K$ is a $K$-algebra with basis $\{\overline{s} \mid s \in S\}$ such that
\[
\overline{s} \cdot \overline{t} = c(s,t)\, \overline{st},
\]
for some map $c : S \times S \to K^{\times}$.
The map $c$ is a $2$-cocycle, and we denote this algebra by $K(S,c)$.
%
%
If $S$ is a semigroup with zero $z$, a \emph{twisted contracted semigroup algebra}
of $S$ over $K$ is a $K$-algebra with basis $\{s \mid \overline{s} \in S \setminus \{z\}\}$
such that
\[
\overline{s} \cdot \overline{t} =
\begin{cases}
c(s,t)\, \overline{st} & \text{if } st \neq z,\\
0 & \text{if } st = z,
\end{cases}
\]
where $c(s,t) \in K^{\times}$ whenever $st \neq z$.
We denote this algebra by $K_0(S,c)$.


\subsection{Homogeneous Weight}\label{Hom-Wei}

Constantinescu and Heise~\cite{MR1476368} introduced
homogeneous weights on integer residue rings, characterized by the
properties that they are constant on classes of associated elements
and that the total weight of every nonzero ideal is proportional to
its cardinality. The weight was subsequently generalized to arbitrary
non-commutative finite rings by Honold and
Nechaev~\cite{Nec-Hon-Thom2001} and Greferath and
Schmidt~\cite{Gre-Sch-Hom-Wei}.
Honold~\cite{Hon-Thom2001} provides an explicit formula for the values
of the homogeneous weight on Frobenius rings.
In particular, Gluesing-Luerssen~\cite{Gluesing-Luerssen2016}
studies the homogeneous weight for finite Frobenius rings that are
isomorphic to a product of local rings.

\begin{ddef}\label{homogeneous-weight}
Let $R$ be a finite Frobenius ring.
The \emph{(left) homogeneous weight} on $R$ with average value $\gamma$
is a function
\[
\omega : R \longrightarrow \mathbb{R}
\]
such that
\begin{enumerate}
\item[(i)] $\omega(0)=0$;

\item[(ii)] $\omega(x)=\omega(y)$ for all $x,y\in R$ such that
\[
Rx = Ry;
\]

\item[(iii)] for all $x \in R \setminus \{0\}$,
\[
\sum_{y \in Rx} \omega(y) = \gamma \, |Rx|,
\]
that is, the average weight over each nonzero principal ideal is $\gamma$.
\end{enumerate}
\end{ddef}


We consider the homogeneous weight with
average value $\gamma = 1$, which we refer to as the
\emph{normalized homogeneous weight}. Following Honold~\cite[p.~412]{Hon-Thom2001},
for a finite Frobenius ring $R$ with generating character $\chi$, the
normalized homogeneous weight is explicitly given by
\begin{equation}\label{NorHomWeight}
	\omega(r)
	=
	1 - \frac{1}{|R^\times|} \sum_{u \in R^\times} \chi(ru),
	\qquad r \in R,
\end{equation}
where $R^\times$ is the group of units of $R$.

In \cite{Gluesing-Luerssen2016}, the normalized homogeneous weight
is described for a direct product of Frobenius rings in general, and
more specifically for a ring that is a direct product of finite local
Frobenius rings. These results are stated respectively in the following
propositions.

\begin{prop}\label{WeightDirectFrobenius} \cite[Proposition~3.7]{Gluesing-Luerssen2016}
Let
\[
R = R_1 \times \cdots \times R_t
\]
be a direct product of Frobenius rings.
For each $1\leq i \in t$, let $\omega_i$ be the normalized homogeneous
weight on $R_i$. Then the normalized homogeneous weight on $R$ is
given by
\[
\omega(a_1,\dots,a_t)
=
1 - \prod_{i=1}^t \bigl(1 - \omega_i(a_i)\bigr),
\qquad
(a_1,\dots,a_t) \in R.
\]
\end{prop}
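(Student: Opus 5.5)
The plan is to use Honold's explicit formula \eqref{NorHomWeight} on each factor together with the fact that a product of generating characters is a generating character. By the proposition on generating characters of direct products, $R=R_1\times\cdots\times R_t$ is Frobenius and $\chi(a_1,\dots,a_t)=\prod_i\chi_i(a_i)$ is a generating character of $R$. The unit group factors as $R^\times=R_1^\times\times\cdots\times R_t^\times$, since an element $(a_1,\dots,a_t)$ is invertible exactly when each $a_i$ is invertible in $R_i$. Hence $|R^\times|=\prod_i|R_i^\times|$.

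Next I substitute into \eqref{NorHomWeight} for $R$. For $r=(a_1,\dots,a_t)$ and $u=(u_1,\dots,u_t)\in R^\times$ we have $ru=(a_1u_1,\dots,a_tu_t)$, so $\chi(ru)=\prod_i\chi_i(a_iu_i)$. The sum over $R^\times$ then splits as a product of independent sums:
\[
\frac{1}{|R^\times|}\sum_{u\in R^\times}\chi(ru)=\prod_{i=1}^t\frac{1}{|R_i^\times|}\sum_{u_i\in R_i^\times}\chi_i(a_iu_i).
\]
By \eqref{NorHomWeight} applied to $R_i$ with the generating character $\chi_i$, each factor equals $1-\omega_i(a_i)$. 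Therefore $\omega(r)=1-\prod_i(1-\omega_i(a_i))$.

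The only delicate point is that \eqref{NorHomWeight} must define the normalized homogeneous weight independently of which generating character is chosen. Otherwise we could not identify the factors with the given $\omega_i$, nor the result with the normalized homogeneous weight on $R$. This follows from uniqueness of the normalized homogeneous weight on a finite Frobenius ring (Greferath--Schmidt, Honold). Uniqueness holds because conditions (i)--(iii) determine $\omega$ by Möbius inversion over the poset of principal left ideals, with $\omega$ constant on generators of each ideal.

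As a safeguard, one could instead verify (i)--(iii) directly for the product formula. Condition (i) is immediate. Condition (ii) holds because $Rx=Ry$ forces $R_ix_i=R_iy_i$ componentwise. Condition (iii) follows from $R(a_1,\dots,a_t)=\prod_iR_ia_i$: expanding $1-\prod_i(1-\omega_i)$ and averaging each $\omega_i$ over $R_ia_i$ gives average $1$ on every nonzero factor, while a zero factor contributes the constant value $1-\omega_i(0)=1$. I expect this averaging computation to be the main (though routine) obstacle, and I would rely on uniqueness instead.
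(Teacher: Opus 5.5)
Your argument is correct and matches the route the paper relies on. The paper gives no proof of its own and cites Gluesing-Luerssen, and the surrounding setup (Honold's formula \eqref{NorHomWeight} and the product generating character $\chi=\prod_i\chi_i$) is exactly the factorization of the character sum over $R^\times=\prod_i R_i^\times$ that you carry out, with uniqueness of the normalized homogeneous weight settling independence from the choice of generating character. Your fallback check of (i)--(iii) via $R(a_1,\dots,a_t)=\prod_i R_ia_i$ is also valid, since $\sum_{y\in Rx}\prod_i\bigl(1-\omega_i(y_i)\bigr)$ factors and every factor with $a_i\neq 0$ vanishes; it has the small advantage of not needing generating characters at all.
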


\begin{prop}\label{WeightDirectFrobeniusLocal} \cite[Proposition~3.8]{Gluesing-Luerssen2016}
Let
\[
R = R_1 \times \cdots \times R_t,
\]
where each $R_i$ is a finite local Frobenius ring with residue field
$R_i/\rad(R_i)$ of order $q$. Then the homogeneous weight
on $R$ is given by
\[
\omega(a)
=
\begin{cases}
\displaystyle
1 - \left(\frac{-1}{q-1}\right)^{\operatorname{wt}(a)},
& \text{if } a \in \soc(R), \\[1.2ex]
1, & \text{otherwise},
\end{cases}
\]
where $\operatorname{wt}(a)=|\{i | a_i \neq 0\}|$ denotes the Hamming weight of $a = (a_1,\dots,a_t)$.
\end{prop}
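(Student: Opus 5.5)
The plan is to reduce to the local case with Proposition~\ref{WeightDirectFrobenius}, compute the normalized homogeneous weight on a single finite local Frobenius ring explicitly, and then multiply out the product formula.

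\textbf{Step 1 (local case).} Let $R_i$ be a finite local Frobenius ring with residue field of order $q$. I claim its normalized homogeneous weight is
\[
\omega_i(a)=\begin{cases}0,& a=0,\\ \tfrac{q}{q-1},& a\in\soc(R_i)\setminus\{0\},\\ 1,& a\notin \soc(R_i).\end{cases}
\]
\begin{itemize}
\item Since $R_i$ is local, $R_i/\rad(R_i)$ is the unique simple module, and it has order $q$.
\item The Frobenius condition $\soc({}_{R_i}R_i)\cong {}_{R_i}(R_i/\rad R_i)$ shows that $\soc(R_i)$ is simple of order $q$. In particular it is the unique minimal left ideal.
\item Hence every nonzero principal left ideal $R_ix$ contains $\soc(R_i)$.
\end{itemize}
Now check the three conditions of Definition~\ref{homogeneous-weight} for the function above.
\begin{itemize}
\item Condition (i) holds by construction.
\item For (ii), membership in $\soc(R_i)$ depends only on $R_ix$: if $x\in\soc(R_i)$ then $R_ix\subseteq\soc(R_i)$, and if $R_ix=R_iy$ with $y\in\soc(R_i)$ then $x\in\soc(R_i)$.
\item For (iii), take $x\neq0$. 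The $q-1$ nonzero socle elements each contribute $q/(q-1)$, the remaining $|R_ix|-q$ elements each contribute $1$, and $0$ contributes $0$. The total is $q+|R_ix|-q=|R_ix|$.
\end{itemize}
So this function is a homogeneous weight with $\gamma=1$. By the uniqueness of the normalized homogeneous weight (Greferath--Schmidt~\cite{Gre-Sch-Hom-Wei}, Honold~\cite{Hon-Thom2001}), it equals $\omega_i$.

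If I wanted to avoid quoting uniqueness, I would instead verify the formula from \eqref{NorHomWeight} directly, using a generating character $\chi$:
\begin{itemize}
\item For $0\ne r\in\soc(R_i)$, the set $rR_i^\times$ equals $\soc(R_i)\setminus\{0\}$, and each element is hit equally often. Since $\chi$ is nontrivial on the minimal ideal, the average of $\chi$ over $\soc(R_i)\setminus\{0\}$ is $-1/(q-1)$.
\item For $r\notin\soc(R_i)$, one shows the unit-average of $\chi(ru)$ vanishes.
\end{itemize}
This second route is the step I expect to be the main obstacle, which is why I prefer the uniqueness argument.

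\textbf{Step 2 (product).} By Proposition~\ref{WeightDirectFrobenius}, $\omega(a)=1-\prod_i(1-\omega_i(a_i))$. From Step~1, the factor $1-\omega_i(a_i)$ is
\begin{itemize}
\item $1$ if $a_i=0$,
\item $-1/(q-1)$ if $0\ne a_i\in\soc(R_i)$,
\item $0$ if $a_i\notin\soc(R_i)$.
\end{itemize}
Since $\soc(R)=\prod_i\soc(R_i)$, there are two cases.
\begin{itemize}
\item If $a\notin\soc(R)$, some factor vanishes, so $\omega(a)=1$.
\item If $a\in\soc(R)$, the product is $\bigl(-1/(q-1)\bigr)^{\operatorname{wt}(a)}$.
\end{itemize}
This yields exactly the stated formula.
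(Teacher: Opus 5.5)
Your proof is correct. The Frobenius condition makes $\soc(R_i)$ the unique minimal left ideal, of order $q$, so the function with values $0$, $q/(q-1)$, $1$ satisfies (i)--(iii) and equals $\omega_i$ by uniqueness of the normalized homogeneous weight; Proposition~\ref{WeightDirectFrobenius} together with $\soc(R)=\prod_i\soc(R_i)$ then gives the stated formula.

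The paper gives no proof of its own: it quotes \cite[Proposition~3.8]{Gluesing-Luerssen2016} directly after the product formula. Your derivation (weight on each local factor, then the product formula) is the natural route to that cited result.

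The character-sum check you set aside as the main obstacle is actually short. Let $r\notin\soc(R_i)$ and $J=\rad(R_i)$; then $rJ\neq0$. Since $u\mapsto u+j$ permutes $R_i^\times$ for each $j\in J$, averaging over $j$ gives $\sum_{u}\chi(ru)=\bigl(|J|^{-1}\sum_{j\in J}\chi(rj)\bigr)\sum_{u}\chi(ru)$. The bracket vanishes because the generating character $\chi$ is nontrivial on the nonzero right ideal $rJ$.
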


In \cite{Gluesing-Luerssen2016}, let $R$ be a finite Frobenius ring of the form
\begin{equation}\label{form1}
R = R_1 \times \cdots \times R_t,
\end{equation}
where each
\[
R_i = R_{i,1} \times \cdots \times R_{i,n_i}
\]
is a finite product of local Frobenius rings $R_{i,j}$ satisfying
\[
|R_{i,j}/\rad(R_{i,j})| = |\soc(R_{i,j})| = q_i
\quad \text{for all } 1\leq j \leq n_i,
\]
with $q_1,\dots,q_t$ distinct.
Recall also that
\[
\soc(R_{i,j}) \cong R_{i,j}/\rad(R_{i,j}).
\]
For such $R$, the following theorem is proved  \cite[Theorem 3.9]{Gluesing-Luerssen2016}. 

\begin{thm}\label{WeightDirectFrobeniusLocalDif}
Let $R$ be as in \eqref{form1} and write its elements as
$a = (a_1,\dots,a_t)$, where $a_i \in R_i$. Using the Hamming weight
$\operatorname{wt}$ on each $R_i$, the homogeneous weight
on $R$ is given by
\[
\omega(a_1,\dots,a_t)
=
\begin{cases}
\displaystyle
1 - \prod_{i=1}^t \left(\frac{-1}{q_i - 1}\right)^{\operatorname{wt}(a_i)}, 
& \text{if } a \in \soc(R), \\[1.2ex]
1, & \text{otherwise}.
\end{cases}
\]
\end{thm}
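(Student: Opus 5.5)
The plan is to combine Proposition~\ref{WeightDirectFrobenius} with Proposition~\ref{WeightDirectFrobeniusLocal}. Applying Proposition~\ref{WeightDirectFrobenius} to the coarse decomposition $R = R_1 \times \cdots \times R_t$ gives
\[
\omega(a_1,\dots,a_t) = 1 - \prod_{i=1}^t \bigl(1-\omega_i(a_i)\bigr),
\]
where $\omega_i$ is the normalized homogeneous weight on $R_i$. Each $R_i$ is a product of local Frobenius rings $R_{i,j}$ whose residue fields all have order $q_i$. Hence Proposition~\ref{WeightDirectFrobeniusLocal} applies to $R_i$ and gives
\[
1-\omega_i(a_i) = \Bigl(\frac{-1}{q_i-1}\Bigr)^{\operatorname{wt}(a_i)} \text{ if } a_i \in \soc(R_i), \qquad 1-\omega_i(a_i)=0 \text{ otherwise}.
\]

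Next I would identify the socle of a direct product as the product of the socles: $\soc(R) = \soc(R_1)\times\cdots\times\soc(R_t)$. This holds because simple right $R$-submodules are supported in a single factor, since they are annihilated by the complementary central idempotents. Consequently $a \in \soc(R)$ if and only if $a_i \in \soc(R_i)$ for every $i$.

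Now split into two cases.
\begin{itemize}
\item If $a \in \soc(R)$, every factor $1-\omega_i(a_i)$ equals $\bigl(\frac{-1}{q_i-1}\bigr)^{\operatorname{wt}(a_i)}$, and the stated formula follows directly.
\item If $a \notin \soc(R)$, some $a_i \notin \soc(R_i)$. The corresponding factor $1-\omega_i(a_i)$ vanishes, so the product is zero and $\omega(a)=1$.
\end{itemize}

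The main point to check is the hypothesis of Proposition~\ref{WeightDirectFrobeniusLocal}: the residue field order must be uniform within each block $R_i$. This is exactly what the grouping \eqref{form1} provides. Beyond that, the only real step is the socle-of-a-product identification. The distinctness of the $q_i$ is not needed for the formula itself; it only makes the decomposition canonical.
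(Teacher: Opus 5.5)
Your proof is correct: applying Proposition~\ref{WeightDirectFrobenius} to the coarse decomposition, then Proposition~\ref{WeightDirectFrobeniusLocal} to each block $R_i$ (Frobenius as a finite product of local Frobenius rings), and using $\soc(R)=\soc(R_1)\times\cdots\times\soc(R_t)$ gives the stated formula; when $a\notin\soc(R)$, a vanishing factor $1-\omega_i(a_i)=0$ forces $\omega(a)=1$. The paper gives no proof of its own and simply cites \cite[Theorem~3.9]{Gluesing-Luerssen2016}; your argument is the natural derivation from the two preceding cited propositions.
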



\section{Semigroup Algebras of Finite Commutative Semigroups over Admissible Fields and Their Homogeneous Weights}\label{SemAlgebraComSem}

In this section, we revisit results of Steinberg (see Sections~5 and~6 of \cite{Ste-Fac-det}) concerning the structure of semigroup algebras of finite commutative semigroups. While these results are originally formulated over the complex field $\mathbb{C}$, our aim is to adapt them to a suitably chosen finite field $K$.
More precisely, for a given finite commutative semigroup $S$, we construct a finite field $K$ such that the structural results remain valid over $K$, in the same spirit as Theorem~\ref{DetAB} from the preliminary section.
Furthermore, when $KS$ is a Frobenius algebra, we combine these results with those of Subsection~\ref{Hom-Wei} to obtain the homogeneous weight on $KS$.

We begin by recalling the relevant results from Section~5 of Steinberg’s paper, and then proceed to those from Section~6.


A semigroup $S$ with zero element $z$ is called \emph{nilpotent} if there exists a positive integer $k$ such that
\[
S^k = \{z\}
\quad \text{and} \quad
S^{k-1} \neq \{z\}.
\]
The integer $k$ is called the \emph{nilpotency degree} of $S$.

Then, for every nonzero element $s \in S \setminus \{z\}$, there exists an integer $1<k' \le k$ such that
\[
s^{k'} = z \quad \text{and} \quad s^{k'-1} \neq z.
\]
It follows that the elements $s^i$, for $1 \le i \le k'-1$, are pairwise distinct. Otherwise, if $s^{i_1} = s^{i_2}$ for some integers $i_1 \neq i_2$ with $i_1, i_2 < k'$, then
\[
s^{k'-1} = s^{i_1} s^{k'-1-i_1} = s^{i_2} s^{k'-1-i_1} = s^{k''},
\]
for some $k'' > k'-1$, which contradicts $s^{k''} = z$ and $s^{k'-1} \neq z$. We say that $k'$ is the \emph{nilpotency degree} of the element $s$.

Let $S$ be a nilpotent semigroup and let $M = S \cup \{1\}$ be the monoid obtained by adjoining an identity element $1$. 
Let $K_0(M,c)$ be a twisted contracted monoid algebra over a field $K$. Then $K_0(M,c)$ is a local ring, and $K_0(S,c)$ is its Jacobson radical. 
If, moreover, $K_0(M,c)$ is Frobenius, then its socle is simple and one-dimensional over $K$.

A nonzero element $m \in M$ is called
 \emph{left} (respectively,  \emph{right}) \emph{annihilating} if 
\(
mS = \{z\} (\text{respectively}, Sm =\{z\}).
\)
If $m$ is both left and right annihilating, we say that $m$ is \emph{annihilating}.

By \cite[Proposition~5.1]{Ste-Fac-det}, if $K_0(M,c)$ is Frobenius, then $M$ has a unique right annihilating element $z'$. Moreover, $z'$ is annihilating and is also the unique left annihilating element of $M$. In particular, the left and right socles of $K_0(M,c)$ coincide and are equal to $K z'$. Although the original theorem is stated over $\mathbb{C}$, the result holds over any field $K$ by the same argument.

By \cite[Theorem~5.2]{Ste-Fac-det}, Steinberg computes the determinant 
$\widetilde{\theta}_{M,c}$ of $K_0(M,c)$ in the case $K=\mathbb{C}$, showing 
that it vanishes identically unless $M$ has a unique annihilating element $z'$. 
This result extends to an arbitrary field $K$. In the case where $K$ is finite, 
we assume in addition that $|K| > |S|$; under this condition, the proof is identical 
to that of Steinberg.
In this case, define an $M \setminus \{z\} \times M \setminus \{z\}$ matrix $A$ by
\[
A_{s,t} =
\begin{cases}
c(s,t), & \text{if } st = z', \\
0, & \text{otherwise}.
\end{cases}
\]
Then
\[
\widetilde{\theta}_{M,c} = \det(A)\, x_{z'}^{\,|S|},
\]
where $\det(A)$ is the determinant of $A$ computed over the field $K$.

In fact, the proof of this theorem yields a stronger result. When $K_0(M,c)$ is a Frobenius algebra, it provides an explicit Frobenius form, which we record in the following proposition.

\begin{prop}\label{GenCharMc}
Suppose that $K_0(M,c)$ is a Frobenius algebra. If $K$ is finite, assume in addition that $|K| > |S|$. Then a Frobenius form of $K_0(M,c)$ is given by the linear extension of the map $\lambda : M \to K$ defined by
\[
\lambda(z') = 1 \quad \text{and} \quad \lambda(s) = 0 \text{ for all } s \neq z',
\]

Moreover, viewing $K_0(M,c)$ as a finite Frobenius ring, it admits a generating character. More precisely, if $\psi : K \to \mathbb{C}^\times$ is a nontrivial additive character with $\psi(1)\neq 1$, then 
\[
\chi=\psi\circ\lambda
\]
is a generating character of $K_0(M,c)$.
\end{prop}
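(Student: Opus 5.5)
The plan is to show that $\ker\lambda$ contains no nonzero left or right ideal. From this the generating-character statement will follow by the standard bridge recalled in the preliminaries. We work in $A=K_0(M,c)$ with basis $M\setminus\{z\}$. By \cite[Proposition~5.1]{Ste-Fac-det}, valid over any field as noted above, $M$ has a unique annihilating element $z'$, and the left and right socles of $A$ both equal $Kz'$.

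\textbf{Frobenius form.} Let $I$ be a nonzero left (or right) ideal of $A$. Since $A$ is finite-dimensional and local with radical $K_0(S,c)$, the nonzero ideal $I$ contains a minimal left (respectively right) ideal. Such a minimal ideal lies in the corresponding socle, which is $Kz'$. Hence $z'\in I$, and since $\lambda(z')=1\neq 0$, we get $I\not\subseteq\ker\lambda$.

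For a more self-contained argument, take $0\neq a\in I$ and multiply by basis elements of $S$ repeatedly. Nilpotency of $S$ makes the process stop, and it ends at a nonzero element $b\in I$ with $Sb=0$, that is, $b$ is annihilated by the radical. Such a $b$ lies in the left socle, so $b=\mu z'$ with $\mu\in K^\times$, and therefore $\lambda(b)=\mu\neq0$. The right case is symmetric. Alternatively, one can note that the matrix $A$ of Steinberg's theorem is exactly the Gram matrix of $(s,t)\mapsto\lambda(\overline{s}\,\overline{t})$. Indeed, $\overline{s}\,\overline{t}=c(s,t)\overline{st}$, and $\lambda$ picks out the coefficient of $z'$. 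Nondegeneracy of the bilinear form is then equivalent to $\det(A)\neq0$, which holds because $\widetilde\theta_{M,c}=\det(A)x_{z'}^{|S|}\not\equiv0$ by Theorem~\ref{DetAB}. This is where the hypothesis $|K|>|S|$ enters.

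\textbf{Generating character.} Take $\chi=\psi\circ\lambda$. Suppose a left ideal $I\neq0$ satisfies $I\subseteq\ker\chi$. Then $\lambda(I)$ is a $K$-subspace of $K$, because $I$ is closed under multiplication by scalars $k\cdot 1\in A$. By the first step $\lambda(I)\neq0$, so $\lambda(I)=K$. This forces $\psi(K)=1$, contradicting $\psi(1)\neq1$. Hence $\ker\chi$ contains no nonzero left ideal, and $\chi$ is generating.

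The main obstacle is justifying that every nonzero one-sided ideal meets the socle $Kz'$, since this uses the uniqueness of the annihilating element. I would lean on the Gram-matrix/determinant identification if the socle argument needs more care.
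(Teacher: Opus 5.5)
Your proof is correct, and your generating-character step is the paper's own argument. The paper rescales an $x\in I$ with $\alpha_{z'}\neq0$ by $\alpha_{z'}^{-1}$ and then uses $\psi(1)\neq1$. Your observation that $\lambda(I)$ is a nonzero subspace, hence all of $K$, does the same job and shows that nontriviality of $\psi$ alone would suffice.

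For the Frobenius-form claim the paper gives no separate argument. It attributes the claim to the proof of Steinberg's Theorem~5.2, i.e.\ to the formula $\widetilde{\theta}_{M,c}=\det(A)\,x_{z'}^{|S|}$, whose specialization at $\lambda$ is the Gram matrix of $(a,b)\mapsto\lambda(ab)$. That is exactly your ``alternative'' argument, and it is where $|K|>|S|$ is invoked (through Theorem~\ref{DetAB} and the finite-field version of Steinberg's formula).

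Your main socle argument is a genuinely different and more elementary route. The step you flag as the obstacle is in fact easy:
\begin{itemize}
\item Since $K_0(M,c)=K1\oplus K_0(S,c)$ acts on its left socle by scalars, every subspace of that socle is a left ideal.
\item If the left socle had dimension at least $2$, it would meet the codimension-one kernel of any Frobenius form in a nonzero left ideal, which is impossible. The same argument works on the right.
\item So both socles are one-dimensional. They contain $z'$, which the radical kills on both sides, hence equal $Kz'$. Your nilpotency descent then puts $z'$ in every nonzero one-sided ideal.
\end{itemize}

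This route needs no determinant theory and no bound on $|K|$. Since any nonzero element of $S^{k-1}$ is annihilating and lies in the one-dimensional socle, it also recovers the uniqueness of $z'$ along the way. It directly yields the fact used in Section~\ref{SecWeight} that, in the Frobenius case, every nonzero principal ideal contains $Kz'$.

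The determinant route buys something different: it identifies nonsingularity of the matrix $A$ with $\lambda$ being a Frobenius form. That identification is what motivates the zero--nonzero pattern analysis of $A$ in Section~\ref{Two-G-A}.
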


\begin{proof}
Let $I$ be a left ideal of $R=K_0(M,c)$ such that $I \subseteq \ker(\chi)$. For $x=\sum_{s\in M}\alpha_s s \in I$, we have $\lambda(x)=\alpha_{z'}$, and hence $\psi(\alpha_{z'})=1$. If $\alpha_{z'} \neq 0$, then $\alpha_{z'}^{-1}x \in I$ and $\psi(1)\neq 1$, a contradiction. Thus $\alpha_{z'}=0$ for all $x \in I$, so $I \subseteq \ker(\lambda)$. Since $\lambda$ is a Frobenius form, $\ker(\lambda)$ contains no nonzero left ideal, and hence $I=0$. Therefore $\chi$ is generating.
\end{proof}


Now, we move to section 6 of Steinberg’ paper.
Let $S$ be a finite semigroup such that $S^2 = S$ and $E(S)$ is contained
in the center of $S$, that is, for all $e \in E(S)$ and $s \in S$, we have
\(es = se\).

Since $E(S)$ is commutative, it is a meet-semilattice with respect to the ordering \(e \leq f\) if $ef = e$.
For $s \in S$, the set
\[
\{\, e \in E(S) \mid se = s \,\}
\]
is a nonempty subsemigroup of $E(S)$ and hence possesses a unique
minimal element, which we denote by $s^{+}$.
Note that $e^{+} = e$ for all $e \in E(S)$.
We define an equivalence relation on $S$ by
\[
s \sim t
\quad \text{if and only if} \quad
s^{+} = t^{+}.
\]
Let $\widetilde{H}_s$ denote the equivalence class of $s \in S$ (this is the $\widetilde{\HH}$-class of $s$ in the sense of Fountain et al.~\cite{Fou-Gom-Gou}).
The idempotent $s^{+}$ is the unique idempotent contained in $\widetilde{H}_s$.

Let $e \in E(S)$. The maximal subgroup of $S$ at $e$,
denoted by $G_e$, is the group of units of $eSe$.
Define
\[
I_e = \{\, s \in S \mid s^{+} < e \,\}.
\]
Then $I_e$ is an ideal of $S$, and $I_e = \emptyset$ if and only if
\[
eSe = G_e = \widetilde{H}_e.
\]

We define
\[
\widetilde{H}_e^{0} =
\begin{cases}
eSe/I_e, & \text{if } I_e \neq \emptyset, \\[0.8ex]
G_e \cup \{z\}, & \text{if } I_e = \emptyset,
\end{cases}
\]
where $z$ is an adjoined zero (note that $\widetilde{H}_e = G_e$
when $I_e = \emptyset$).

Then $\widetilde{H}_e^{0}$ is a monoid with identity $e$ and may be
identified with $\widetilde{H}_e \cup \{z\}$, where $z$ is a zero
element. For $a,b \in \widetilde{H}_e$, the multiplication is given by
\[
a \cdot b =
\begin{cases}
ab, & \text{if } ab \in \widetilde{H}_e, \\[0.6ex]
z, & \text{otherwise}.
\end{cases}
\]
As no confusion will arise, we simply write $ab$ instead of
$a \cdot b$.

The group of units of $\widetilde{H}_e^{0}$ is $G_e$, and every
nonunit element of $\widetilde{H}_e^{0}$ is nilpotent. 
By \cite[Theorem 6.5]{Ste-Fac-det}, the mapping
\begin{equation}\label{Thm6.5}
Z : KS \longrightarrow \prod_{e \in E(S)}K_0 \widetilde{H}_e^{\,0}
\end{equation}
given by \(Z(s) = \sum_{t \le s} t\ (s \in S)\)
is an isomorphism of $K$-algebras. In particular, $KS$ is unital.


In (\ref{Thm6.5}), the algebra $KS$ is isomorphic to a direct product of complex algebras of certain monoids with zero, where the submonoid obtained by removing the group of units is nilpotent. We now present Steinberg’s results in detail the structure of these complex monoid algebras in the commutative case.

Let $M$ be a commutative monoid with zero $z$ and group of units $G$ such that $M\setminus G$ consists of nilpotent elements.
Then, let
\[
M/G = \{\, Gm \mid m \in M \,\}
\]
is a monoid with multiplication defined by
\(
Gm_1 \cdot Gm_2 = Gm_1 m_2,
\)
and it is a quotient of $M$ via the canonical homomorphism
\(
m \mapsto Gm.
\)

Since $G$ is a finite abelian group, all irreducible representations of $G$ are one-dimensional and hence correspond to group homomorphisms
\[
\chi : G \to \mathbb{C}^\times.
\]
In particular, every such character takes values in the group of $n$-th roots of unity, where $n = |G|$.

Choose a prime $p$ such that $p \nmid n$. Since $(\mathbb{Z}/n\mathbb{Z})^\times$ is finite, there exists an integer $k \ge 1$ such that
\[
p^k \equiv 1 \pmod n.
\]
Hence $n \mid (p^k - 1)$, and the finite field $\mathbb{F}_q$ with $q=p^k$ contains all $n$-th roots of unity, since $\mathbb{F}_q^\times$ is cyclic of order $q-1$.
It follows that every character $\chi : G \to \mathbb{F}_q^\times$ is well-defined, and hence all irreducible representations of $G$ can be realized over $\mathbb{F}_q$. Moreover, for every $i \ge 1$, the extension field $\mathbb{F}_{q^i}$ also contains all $n$-th roots of unity and therefore serves as a splitting field for $G$.

Choosing $i$ sufficiently large so that $|\mathbb{F}_{q^i}| > |M|$, we set $K=\mathbb{F}_{q^i}$. We denote by $\widehat{G}$ the group of characters
\[
\chi : G \to K^\times.
\]
In this way, all irreducible representations of $G$ are one-dimensional and their values are roots of unity of order dividing $n$. Since $\operatorname{char}(K)=p$ and $p \nmid n$, we also have $\operatorname{char}(K)\nmid |G|.$

For $\chi \in \widehat{G}$, define
\[
e_{\chi}
=
\frac{1}{|G|}
\sum_{g \in G} \chi(g)^{-1}\, g
\in KG.
\]

For $\chi \in \widehat{G}$, define
\[
e_{\chi}
=
\frac{1}{|G|}
\sum_{g \in G} \chi(g)^{-1}\, g
\in KG.
\]

Since $\operatorname{char}(K)\nmid |G|$, the elements $e_\chi$ are well-defined idempotents in $KG$, and we have
\[
1 = \sum_{\chi \in \widehat{G}} e_{\chi},
\]
which gives a decomposition of $1$ into orthogonal primitive idempotents of $KG$.

Since $KG \subseteq K_0 M$ and $K_0 M$ is commutative, we obtain an isomorphism of $K$-algebras
\begin{equation}\label{6.1}
K_0 M
\cong
\prod_{\chi \in \widehat{G}} e_{\chi} K_0 M e_{\chi}
=
\prod_{\chi \in \widehat{G}} e_{\chi} K_0 M.
\end{equation}

We have $Mm =Mm'$ if and only if $Gm =Gm'$. Then, fix, once and for all, representatives $m_1,\dots,m_r$ of the
$G$-orbits on $M \setminus \{z\}$, and without loss of generality
assume that $m_1 = 1$.
If $m_i m_j \neq z$, define $f(i,j)$ by the condition
\[
m_i m_j \in G m_{f(i,j)}.
\]
Let $G_i$ denote the stabilizer of $m_i$ in $G$. 

Let $\chi \in \widehat{G}$, and define
\[
I_{\chi} = \{\, m \in M \mid \text{the stabilizer of $m$ in $G$ is not contained in } \ker \chi \,\}.
\]
Then $I_{\chi} = \emptyset$ if $\chi$ is the trivial character $1_G$, 
and otherwise $I_{\chi}$ is a proper ideal of $M$. 
Define
\[
M_{\chi} =
\begin{cases}
M, & \text{if } \chi = 1_G,\\
M/I_{\chi}, & \text{otherwise.}
\end{cases}
\]
The ideal $I_{\chi}$ is compatible with the equivalence relation
defining $M/G$. Hence the quotient $M_{\chi}/G$ is well defined.
Note that $M_{\chi}/G$ is obtained by adjoining an identity to a
nilpotent commutative semigroup.
If $m \in M \setminus I_{\chi}$, we may define
\[
\chi(m) = \chi(g) \quad \text{where } m = g m_i \text{ with } g \in G
\]
This is well-defined: if $m = h m_i$ as well, then $h^{-1}g \in G_i \subseteq \ker \chi$, and hence $\chi(g)=\chi(h)$.
Let
\(
J_{\chi} = \{\, i \mid G_i \subseteq \ker \chi \,\}.
\)

The proof of \cite[Proposition~6.9]{Ste-Fac-det} remains valid over $K$, since it relies only on:
the semisimplicity of $KG$ (guaranteed by $\operatorname{char}(K)\nmid |G|$),
the existence of linear characters $\chi : G \to K^\times$ (as $K$ is a splitting field for $G$),
and formal manipulations of structure constants.

Then, we have an isomorphism of \emph{twisted} algebras
\begin{equation}\label{twisted}
e_{\chi} K_0 M \cong K_0(M_{\chi}/G, c_{\chi}),
\end{equation}
where the twisting 2-cocycle \(c_{\chi}\) is given by
\[
c_{\chi}(G m_i, G m_j) = \chi(m_i m_j) \quad \text{if } m_i m_j \notin I_{\chi}.
\]
In particular, if \(m_i m_j = m_{f(i,j)}\) whenever \(m_i m_j \notin I_{\chi}\), then the twist is trivial and we have the untwisted algebra
\[
e_{\chi} K_0 M \cong K_0[M_{\chi}/G].
\]

Then, the contracted semigroup determinant
\(
\widetilde{\theta}_{M_{\chi}/G, c_{\chi}}
\)
vanishes identically unless $M_{\chi}/G$ has a unique annihilating element $G m_{i_{\chi}}$.
In this case,
\[
\widetilde{\theta}_{M_{\chi}/G, c_{\chi}} 
= \det A(\chi) \cdot x_{G m_{i_{\chi}}}^{\,|M_{\chi}/G|-1},
\]
where $A(\chi)$ is the $J_{\chi} \times J_{\chi}$ matrix with entries
\[
A(\chi)_{ij} =
\begin{cases}
\chi(m_i m_j), & \text{if } m_i m_j \in G m_{i_{\chi}}, \\
0, & \text{otherwise},
\end{cases}
\quad i,j \in J_{\chi}.
\]

Now let $S$ be a finite commutative semigroup. 
%
%
%
%

\begin{ddef}
Let $S$ be a finite commutative semigroup. 
A finite field $K$ is called \emph{admissible for $S$} if:
\begin{enumerate}
\item $\operatorname{char}(K)\nmid |G_e|$ for all $e \in E(S)$,
\item $K$ is a splitting field for each maximal subgroup $G_e$ of $S$,
\item $|K| > |S|$.
\end{enumerate}
\end{ddef}

We fix an admissible field $K$ for $S$.

Such a field can be constructed by taking 
\[
N = \operatorname{lcm}\{\, |G_e| \mid e \in E(S)\,\},
\]
choosing a prime $p \nmid N$, and then a power $q=p^k$ with $q \equiv 1 \pmod N$, so that $\mathbb{F}_q$ contains all $N$-th roots of unity. Taking a sufficiently large extension $\mathbb{F}_{q^i}$ yields the desired field $K$.

By \cite[Theorem~6.6]{Ste-Fac-det}, the semigroup determinant of $S$ is equal to the product of the determinants of $\widetilde{H}_e^{0}$ appearing in \eqref{Thm6.5}, after a suitable substitution of variables.
Hence, the semigroup determinant of $S$ is not identically zero (equivalently, $KS$ is a Frobenius algebra; see \cite[Theorem~2.1]{Ste-Fac-det}) if and only if, for each $e \in E(S)$, the determinant of $\widetilde{H}_e^{0}$ is not identically zero.
By the preceding analysis, this holds if and only if, for each $e \in E(S)$ and each $\chi \in \widehat{G_e}$, the semigroup $(\widetilde{H}_e^0 / I_{\chi}) / G_e$ has a unique annihilating element and $\det A(\chi) \neq 0$, where the determinant is computed in the field $K$.

%
%
%

Moreover, by \eqref{Thm6.5}, \eqref{6.1}, and \cite[Proposition~6.9]{Ste-Fac-det}, in the case where $S$ is commutative, the algebra $KS$ is isomorphic to a finite direct product of local rings. 
Each factor is a \emph{twisted contracted monoid algebra} of a commutative nilpotent semigroup with an adjoined identity, as observed in \cite[Remark~6.15]{Ste-Fac-det}.

We provide an explicit description of a Frobenius form of $KS$ in the case where it is a Frobenius algebra.  
Combining \eqref{Thm6.5}, \eqref{6.1}, and \eqref{twisted}, we obtain an isomorphism of $K$-algebras
\begin{equation}\label{isoCS2}
KS 
\cong \prod_{e \in E(S)} K_0 \widetilde{H}_e^{\,0}
\cong \prod_{e \in E(S)} \prod_{\chi \in \widehat{G_e}} e_{\chi} K_0 \widetilde{H}_e^{\,0}
\cong \prod_{e \in E(S)} \prod_{\chi \in \widehat{G_e}} K_0\bigl((\widetilde{H}_e^{\,0})_{\chi}/G_e,\, c_{(e,\chi)}\bigr).
\end{equation}

By Propositions~\ref{DirectProLambda} and~\ref{GenCharMc}, we obtain the following result.

\begin{thm}\label{GenCharS}
Let $S$ be a finite commutative semigroup, and let $K$ be an admissible field  for $S$. Suppose that $KS$ is a Frobenius algebra.  
Then a Frobenius form $\lambda : KS \to K$ is given componentwise by
\[
\lambda
=
\sum_{e \in E(S)} \sum_{\chi \in \widehat{G_e}} \lambda_{e,\chi},
\]
where each
\[
\lambda_{e,\chi} : K_0\bigl((\widetilde{H}_e^{\,0})_{\chi}/G_e,\, c_{(e,\chi)}\bigr) \longrightarrow K
\]
is a Frobenius form of the corresponding twisted contracted monoid algebra, as described in Proposition~\ref{GenCharMc}, defined by
\[
\lambda_{e,\chi}(z_e) = 1 
\quad \text{and} \quad 
\lambda_{e,\chi}(s) = 0 \ \text{for all } s \neq z_e,
\]
where $z_e$ is the unique annihilating element of $(\widetilde{H}_e^{\,0})_{\chi}/G_e$.

Moreover, $KS$ admits a generating character
\[
\kappa
=
\prod_{e \in E(S)} \prod_{\chi \in \widehat{G_e}} \chi_{e,\chi},
\]
where 
\[
\chi_{e,\chi} = \psi \circ \lambda_{e,\chi},
\]
and $\psi : K \to \mathbb{C}^\times$ is a nontrivial additive character with $\psi(1)\neq 1$. 
\end{thm}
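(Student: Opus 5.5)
The plan is to transport everything through the isomorphism \eqref{isoCS2}, write $\Phi$ for it, and treat the factors one at a time. Write
\[
R_{e,\chi}=K_0\bigl((\widetilde{H}_e^{\,0})_{\chi}/G_e,\, c_{(e,\chi)}\bigr),
\]
so that $\Phi : KS \to \prod_{e,\chi} R_{e,\chi}$.

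The first step is to show that every factor $R_{e,\chi}$ is itself Frobenius. A direct product $\prod_i R_i$ of unital finite-dimensional algebras is Frobenius exactly when each $R_i$ is. For the forward direction, take a Frobenius form $\lambda$ on the product and restrict it to $R_i$, embedded via the central idempotent $\varepsilon_i$. Any left ideal of $R_i$ is a left ideal of the product. So if the restriction contained a nonzero left ideal in its kernel, so would $\lambda$, which is impossible. Since $KS$ is Frobenius by hypothesis, each $R_{e,\chi}$ is Frobenius.

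The next step is to check that the hypotheses of Proposition~\ref{GenCharMc} hold for each factor. The monoid $(\widetilde{H}_e^{\,0})_{\chi}/G_e$ is a nilpotent commutative semigroup with an identity adjoined, as noted before \eqref{twisted}. We also need $|K|$ to exceed the size of that nilpotent part. This follows from admissibility, because $|K|>|S|\ge |\widetilde{H}_e|\ge |(\widetilde{H}_e^{\,0})_{\chi}/G_e|-1$. Then \cite[Proposition~5.1]{Ste-Fac-det}, valid over any $K$, supplies the unique annihilating element $z_e$. Proposition~\ref{GenCharMc} then says that $\lambda_{e,\chi}$, the indicator of $z_e$, is a Frobenius form of $R_{e,\chi}$, and that $\chi_{e,\chi}=\psi\circ\lambda_{e,\chi}$ is a generating character of $R_{e,\chi}$.

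Next I assemble the pieces. Proposition~\ref{DirectProLambda} shows that $\sum_{e,\chi}\lambda_{e,\chi}$, each summand extended through the projection onto its factor, is a Frobenius form on the product. Pulling it back along the algebra isomorphism $\Phi$ gives a Frobenius form on $KS$, because $\Phi$ carries left and right ideals bijectively to left and right ideals. This pullback is the form $\lambda$ of the statement. For the character, the proposition on generating characters of direct products says that $\prod_{e,\chi}\chi_{e,\chi}$ is generating on the product. Composing with $\Phi$ keeps it generating, since $\ker(\kappa\circ\Phi)=\Phi^{-1}(\ker\kappa)$ and $\Phi$ preserves left ideals. Alternatively, one can note that $\kappa=\psi\circ\lambda$ and rerun the scaling argument of Proposition~\ref{GenCharMc} directly.

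The hardest part is the bookkeeping that justifies reading the componentwise formula as a form on $KS$. I need \eqref{isoCS2} to be an isomorphism of unital $K$-algebras, so that ideals correspond. For this I will rely on \eqref{Thm6.5}, on the fact that \eqref{6.1} is given by central idempotents, and on \eqref{twisted}, each of which holds over $K$ by the admissibility conditions. I also need to confirm the size bound on each factor, which I expect to be routine.
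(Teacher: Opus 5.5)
Your proposal is correct and follows essentially the same route as the paper. The paper simply applies Proposition~\ref{GenCharMc} to each factor and assembles the pieces with Proposition~\ref{DirectProLambda} (and the product statement for generating characters) through the isomorphism \eqref{isoCS2}. You also make explicit what the paper leaves implicit: each $R_{e,\chi}$ inherits the Frobenius property from $KS$ and satisfies the size bound. One further point worth recording is that \eqref{Thm6.5} is set up under the assumption $S^2=S$, which holds here because a Frobenius $KS$ is unital.
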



We are now in a position to apply the results of Subsection~\ref{Hom-Wei} together with Theorem~\ref{GenCharS} to construct homogeneous weights on semigroup algebras of finite commutative semigroups.

Let $S$ be a finite commutative semigroup, and let $K$ be an admissible field for $S$ such that $KS$ is a Frobenius algebra. Then the homogeneous weight on $KS$ is given as follows.

Under the isomorphism \eqref{isoCS2}, each $x \in KS$ can be written as
\[
x = \bigl(x_{(e,\chi)}\bigr)_{e,\chi}.
\]

By~(\ref{NorHomWeight}) and Theorem~\ref{GenCharS}, the normalized homogeneous weight on $KS$ is given by
\[
\omega(x)
=
1 - \frac{1}{|(KS)^\times|} \sum_{u \in (KS)^\times} \kappa(xu),
\]
and, by Proposition~\ref{WeightDirectFrobenius},
\[
\omega(x)
=
1 - \prod_{e \in E(S)} \prod_{\chi \in \widehat{G_e}} 
\bigl(1 - \omega_{(e,\chi)}(x_{(e,\chi)})\bigr),
\]
where
\[
\omega_{(e,\chi)}(r)
=
1 - \frac{1}{|R_{e,\chi}^\times|} \sum_{u \in R_{e,\chi}^\times} \chi_{e,\chi}(ru),
\quad r \in R_{e,\chi},
\]
and we write
\[
R_{e,\chi} = K_0\bigl((\widetilde{H}_e^{\,0})_{\chi}/G_e,\, c_{(e,\chi)}\bigr).
\]

Thus, computing $\omega$ reduces to computing each $\omega_{(e,\chi)}$.

Since
\[
KS \cong \prod_{e \in E(S)} \prod_{\chi \in \widehat{G_e}} R_{e,\chi},
\]
where each $R_{e,\chi}$ is a finite local Frobenius ring with residue field isomorphic to $K$, Proposition~\ref{WeightDirectFrobeniusLocal} applies with $q = |K|$.

Therefore, the homogeneous weight on $KS$ is given by
\[
\omega(a)
=
\begin{cases}
\displaystyle
1 - \left(\frac{-1}{q-1}\right)^{\operatorname{wt}(a)},
& \text{if}\ a \in \soc(KS), \\[1.2ex]
1, & \text{otherwise},
\end{cases}
\]
where $a = (a_{(e,\chi)})_{e,\chi}$ under the above decomposition, and
\[
\operatorname{wt}(a)
=
\bigl|\{(e,\chi) \mid a_{(e,\chi)} \in K^\times z_{e}\}\bigr|,
\]
that is, the number of components in which $a_{(e,\chi)}$ lies in the nonzero socle of $R_{e,\chi}$.

If different products are defined in~(\ref{isoCS2}) using different admissible fields, then their corresponding homogeneous weights can also be computed by Theorem~\ref{WeightDirectFrobeniusLocalDif}.


\section{Two-Generated Twisted Contracted Monoid Algebras}\label{Two-G-A}


As mentioned in the previous section, to determine whether the semigroup algebra of a finite commutative semigroup over an admissible finite field is a Frobenius algebra, it suffices to consider the case where $M^{(G)}$ is a finite commutative monoid with zero $z$ and group of units $G$ such that $M^{(G)}\setminus G$ consists of nilpotent elements. 
Then, to determine whether the monoid algebra $K_0M^{(G)}$ over an admissible finite field $K$ is Frobenius, it is necessary and sufficient to verify that, for every character $\chi:G\to K^\times$, the algebra
\[
K_0(M^{(G)}_{\chi}/G,c_{\chi})
\]
is Frobenius. Equivalently, each matrix $A(\chi)$ has nonzero determinant.

Throughout this section, let $S$ be a finite commutative nilpotent semigroup with zero $z$, and let
\[
M=S\cup\{1\}.
\]
Let $B$ be a finite generating set of $S$. We study the twisted contracted monoid algebra
\[
R=K_0(M,c)
\]
over an admissible finite field $K$.

The results of this section apply, in particular, to each quotient monoid $M^{(G)}_{\chi}/G$ by replacing $M$ with $M^{(G)}_{\chi}/G$ and the cocycle $c$ with $c_{\chi}$.

A principal objective of this section is to identify classes of two-generated monoids for which the Frobenius criterion is independent of the particular cocycle. For these classes, it suffices to verify the determinant condition for the trivial character, from which the corresponding result for all characters follows automatically. We then investigate the remaining cases by developing structural properties of two-generated monoids, leading to a classification under suitable assumptions.

By the results of Section~\ref{SemAlgebraComSem}, if $R$ is Frobenius, then $M$ has a unique annihilating element $z'$, and the associated matrix
\[
A = (A_{s,t})_{s,t \in M\setminus\{z\}}
\]
defined by
\[
A_{s,t} =
\begin{cases}
c(s,t), & \text{if } st = z',\\
0, & \text{otherwise},
\end{cases}
\]
has nonzero determinant. 

Throughout this section, we assume that \(M\) has a unique annihilating element \(z'\) and that the associated matrix \(A\) has no zero rows or columns. Observe that this condition is automatically satisfied whenever \(\det A\neq0\). Whenever the stronger assumption \(\det A\neq0\) is required, it will be stated explicitly.

\begin{lem}\label{AnnDetOne}
For every $s \in M \setminus \{z,z'\}$, there exists $t \in M \setminus \{z,z'\}$ such that
\(
st = z'.
\)
\end{lem}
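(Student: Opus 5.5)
The plan is to read the conclusion directly off the standing hypothesis that the matrix $A$ has no zero rows, and then use the annihilating property of $z'$ to force the partner $t$ away from $z$ and $z'$.

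\textbf{Step 1 (produce a partner).} Fix $s\in M\setminus\{z,z'\}$. The row of $A$ indexed by $s$ is not zero, so some $t\in M\setminus\{z\}$ has $A_{s,t}\neq 0$. By the definition of $A$, this means $st=z'$, and then $A_{s,t}=c(s,t)\in K^\times$. The condition $t\neq z$ holds automatically, since $sz=z\neq z'$.

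\textbf{Step 2 (rule out $t=z'$ when $s\in S$).} Suppose $s\in S\setminus\{z,z'\}$ and $t=z'$. Because $z'$ is annihilating, $sz'=z'\cdot s\in z'S=\{z\}$, so $st=z\neq z'$. This contradicts $st=z'$, so $t\neq z'$ and $t\in M\setminus\{z,z'\}$ as required. One can also check $t\neq 1$: otherwise $s=st=z'$, against the choice of $s$. Hence $t\in S\setminus\{z,z'\}$, which is the form in which the lemma will be used later for nilpotent elements.

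\textbf{Step 3 (the identity $s=1$, the main obstacle).} For $s=1$ the equation $1\cdot t=z'$ forces $t=z'$, so the partner produced in Step~1 is $z'$ itself. Step~2 cannot move it, because $1$ does not annihilate $z'$. This is the only delicate point.

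\emph{Degenerate case.} If $S=\{z\}$, then $1$ is itself the annihilating element, so $z'=1$. In that case $M\setminus\{z,z'\}=\emptyset$ and the statement is vacuous.

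\emph{Nondegenerate case.} If $S\neq\{z\}$, then $z'\in S$ and $1\neq z'$. The row of $1$ in $A$ is supported exactly in the column $z'$, with entry $c(1,z')$. I would record this as the pairing of $1$ with $z'$, the symmetric counterpart of the fact that the row of $z'$ is supported only in the column $1$. So for $s=1$ the lemma should be read with $t=z'$ as the pairing partner. For every nonunit $s$, Steps~1--2 give the stated conclusion with $t\in M\setminus\{z,z'\}$. These two facts together describe the support of every row of $A$, and that is how the lemma will be applied in the subsequent analysis of two-generated monoids.
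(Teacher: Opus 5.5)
Your argument is the paper's own: you take a nonzero entry in the row of $A$ indexed by $s$. It is more careful, though: the paper never justifies $t\neq z'$, which you correctly obtain from $sz'=z$ for $s\in S$, and your Step~3 is right that for $S\neq\{z\}$ the statement as literally written fails at $s=1$, since $1\cdot t=z'$ forces $t=z'$ --- a case the paper's one-line proof overlooks. So the lemma should be read for $s\in M\setminus\{z,z',1\}$, consistent with the paper's later convention of deleting the rows and columns of $1$ and $z'$, and for those $s$ your Steps~1--2 are a complete proof, even yielding $t\in S\setminus\{z,z'\}$, which is the form in which it is used to obtain $l_r$ and $l_s$.
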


\begin{proof}
No row of $A$ is identically zero. Hence, for each $s \in M \setminus \{z,z'\}$, there exists $t \in M \setminus \{z,z'\}$ such that $A_{s,t} \neq 0$. By definition of $A$, this is equivalent to $st = z'$.
\end{proof}

The lemma shows that every non-annihilating element of $M$ can be sent to the annihilating element $z'$ by right multiplication. 

\begin{ddef}
With the above notation, assume that $\det A\neq0$. We say that the monoid $M$ is \emph{$A$-diagonal} if, in the submatrix of $A$ indexed by $M\setminus\{1,z'\}$, each row and each column contains exactly one nonzero entry.
The monoid $M$ is said to be \emph{$A$-upper triangular} if this submatrix can be transformed into an upper triangular matrix by suitable permutations of its rows and columns.
\end{ddef}

The advantage of $M$ being $A$-diagonal or $A$-upper triangular is that the determinant of $A$ depends only on the pattern of its zero and nonzero entries. In particular, its value is independent of the specific values of the cocycle $c(s,t)$; it is enough to determine whether or not $st=z'$.

Consequently, the properties of being $A$-diagonal or $A$-upper triangular apply equally to any matrix whose zero--nonzero pattern is determined solely by the products in the underlying monoid. In particular, they apply to the matrices $A(\chi)$ arising in Section~\ref{SemAlgebraComSem}, where
\[
A(\chi)_{ij}=
\begin{cases}
\chi(m_im_j), & \text{if } m_im_j\in Gm_{i_\chi},\\
0, & \text{otherwise},
\end{cases}
\qquad i,j\in J_\chi.
\]
Therefore, whenever our results imply that the determinant of such a matrix depends only on its zero--nonzero pattern, it is enough to verify that $\det A(\chi)\neq0$ for a single character (for example, the trivial character $1_G$). It then follows that $\det A(\chi)\neq0$ for every character $\chi$.

Note that in the matrix $A$, we have
\(A_{z',t}\neq 0\)
if and only if
\(t=1\). 
Moreover, for every $s \in M$, if there exists an element $t \in S$ such that $A_{s,t}\neq 0$, then $s\neq z'$, and consequently
\(A_{s,1}=0\).
For convenience, we omit the rows and columns indexed by the identity element and by $z'$, and work with the resulting submatrix of $A$. Accordingly, in the definitions of $A$-diagonal and $A$-upper triangular, as well as in the computation of $\det A$, we always refer to this submatrix.

For some particular cases, we describe the semigroup $S$ by means of the minimal identities satisfied by $B$, together with the identities determining the unique annihilating element of $M$. We restrict our attention to the cases $|B|=1$ and $|B|=2$, namely $B = \{r,s\}$.

First, suppose that $|B| = 1$ and $B = \{r\}$. Then
\(
S = \langle r \rangle
\)
is a commutative nilpotent semigroup. Let $k>1$ denote the nilpotency degree of $r$. Since $S$ is generated by $r$, the nilpotency degrees of $S$ and $r$ coincide. Therefore,
\[
r^k = z
\qquad \text{and} \qquad
r^{k-1} \neq z.
\]
In this case, the unique annihilating element of $M$ is
\(
z' = r^{k-1},
\)
and consequently, $M$ is $A$-diagonal.

Now suppose that $|B| = 2$ and $B = \{r,s\}$. 
Then $S = \langle r,s \rangle$. 
Suppose that the nilpotency degrees of $r$ and $s$ are $1<k$ and $1<k'$, respectively.
Since $r^{k-1}, s^{k'-1} \in M \setminus \{z\}$, Lemma~\ref{AnnDetOne} implies that there exist integers $l_s, l_r \geq 0$ such that
\[
r^{k-1}s^{l_s} = z', \qquad r^{l_r}s^{k'-1} = z'.
\]

\begin{lem}\label{ii'j}
There do not exist integers $i$ and $j' < j$ such that
\[
r^i s^j = r^i s^{j'} = z'.
\]
Also, there do not exist integers $i' < i$ and $j$ such that
\[
r^i s^j = r^{i'} s^{j} = z'.
\]
\end{lem}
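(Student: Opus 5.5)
The plan is a direct argument from the two defining properties of $z'$: it is annihilating, so $z'S=\{z\}$, and it differs from $z$. We only need to show that if $r^is^{j'}=z'$ with $j'<j$, then $r^is^j=z$. That contradicts $r^is^j=z'$, because $z'\neq z$. The second assertion is the same statement with the roles of $r$ and $s$ (and of $i,j$) exchanged, since $M$ is commutative. So it is enough to prove the first.

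First write $j=j'+d$ with $d\ge 1$. Commutativity gives
\[
r^is^j=(r^is^{j'})\,s^{d}=z'\,s^{d}.
\]
Since $d\ge1$, the element $s^d$ lies in $S$. Because $z'$ is annihilating, $z'S=\{z\}$, so $z's^d=z$. Hence $r^is^j=z\neq z'$, and the two equalities $r^is^j=z'$ and $r^is^{j'}=z'$ cannot both hold.

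Two details need a little care. One is the convention for exponents: $r^0=s^0=1\in M$, and the index $i$ may be $0$. This causes no trouble, because the argument only multiplies by $s^d$ with $d\ge1$, which is a genuine element of $S$. The other is that "annihilating" is defined relative to $S$, not $M$; this is exactly what is used, since $s^d\in S$. I do not expect a real obstacle. The only point to state explicitly is that $z'\neq z$, which holds because $z'$ is by definition a nonzero element of $M$.
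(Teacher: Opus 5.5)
Your proposal is correct and is essentially the paper's own proof. The paper likewise writes $r^is^j=(r^is^{j'})\,s^{j-j'}=z'\,s^{j-j'}=z$, which contradicts $z'\neq z$, and handles the second assertion by the symmetric computation.
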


\begin{proof}
Suppose that
\(
r^i s^j = r^i s^{j'} = z'
\)
with $j' < j$. Then
\[
r^i s^j = r^i s^{j'} s^{j-j'} = z' s^{j-j'} = z,
\]
a contradiction.

Similarly, a contradiction is obtained in the other case.
\end{proof}


If $l_s = k'-1$, then
\(
r^{k-1}s^{k'-1} = z'.
\)
Then
\(
r^{l_r}s^{k'-1} = z' = r^{k-1}s^{k'-1},
\)
and hence by Lemma~\ref{ii'j}, $l_r = k-1$. The converse follows similarly.

\begin{lem}\label{k'-1k-1}
If $l_s = k'-1$ and $l_r = k-1$ then $M$ is $A$-diagonal.
\end{lem}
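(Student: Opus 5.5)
Since $l_s=k'-1$ and $l_r=k-1$, we have $z'=r^{k-1}s^{k'-1}$. The plan is to show that $M$ is, in effect, the ``box'' monoid
\[
M\setminus\{z\}=\{r^is^j \mid 0\le i\le k-1,\ 0\le j\le k'-1\},
\]
with all these elements distinct. It then follows that $r^is^j\cdot r^{i'}s^{j'}=z'$ holds exactly when $i+i'=k-1$ and $j+j'=k'-1$.

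\emph{Reduction to exponent pairs.} Every element of $M\setminus\{z\}$ has the form $r^is^j$ with $i<k$ and $j<k'$, because $S$ is commutative and $r^k=s^{k'}=z$. We claim that for any such $i,j$ we have $r^is^j\cdot r^{k-1-i}s^{k'-1-j}=z'\ne z$, so $r^is^j\neq z$. Next, suppose $r^is^j=r^{i'}s^{j'}$ with both pairs in the box. Multiplying both sides by $r^{k-1-i}s^{k'-1-j}$ gives $z'=r^{k-1-i+i'}s^{k'-1-j+j'}$. If $i'\ne i$ or $j'\ne j$, we compare with $z'=r^{k-1}s^{k'-1}$. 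When one exponent agrees and the other differs, Lemma~\ref{ii'j} gives a contradiction directly. When both exponents change, we multiply further: for instance, if $i'>i$ then the left side gets an $r$-exponent at least $k$ and vanishes, giving $z'=z$. If instead $i'<i$ and $j'>j$, we use the symmetric multiplication by $r^{k-1-i'}s^{k'-1-j'}$ applied to the same equality, which pushes the $s$-exponent past $k'-1$. In every case some exponent exceeds its nilpotency bound, producing $z'=z$. So the map $(i,j)\mapsto r^is^j$ is injective on the box.

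\emph{Characterizing products equal to $z'$.} Take $x=r^is^j$ and $y=r^{i'}s^{j'}$, so $xy=r^{i+i'}s^{j+j'}$. If $i+i'\ge k$ or $j+j'\ge k'$, then $xy=z$. Otherwise $(i+i',j+j')$ lies in the box, and by injectivity $xy=z'$ if and only if $(i+i',j+j')=(k-1,k'-1)$. Thus in the submatrix of $A$ indexed by $M\setminus\{1,z'\}$, the row of $r^is^j$ has its unique nonzero entry in the column of $r^{k-1-i}s^{k'-1-j}$, and symmetrically for columns. This complementary element is neither $1$ nor $z'$ precisely because $(i,j)$ is neither $(k-1,k'-1)$ nor $(0,0)$. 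Hence each row and each column of the submatrix contains exactly one nonzero entry, which is a permutation pattern, and $\det A\neq0$ automatically. Therefore $M$ is $A$-diagonal.

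The main obstacle is the injectivity step in the case where both exponents differ. There the choice of which complementary element to multiply by has to be made carefully so that one exponent overflows; I would handle this by a short case split on the signs of $i'-i$ and $j'-j$.
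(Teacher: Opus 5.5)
Your proof is correct, but it takes a longer route than the paper's. The paper never shows that distinct exponent pairs in the box give distinct elements. It fixes $x=r^is^j\neq 1,z'$ and an arbitrary $u=r^{i'}s^{j'}$ with $xu=z'$, and notes that $i+i'\le k-1$ and $j+j'\le k'-1$ (otherwise $xu=z$). It then multiplies $z'=r^{i+i'}s^{j+j'}$ by $r^{k-1-(i+i')}s^{k'-1-(j+j')}$ and obtains $z'$ again. Since $z'$ is annihilating, this cofactor must be $1$, so $(i',j')=(k-1-i,k'-1-j)$ for every representation of every such $u$. That gives uniqueness of the partner of $x$ directly, and existence is immediate.

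You instead first prove a normal form: $M\setminus\{z\}$ is exactly $\{r^is^j\mid 0\le i<k,\ 0\le j<k'\}$ with pairwise distinct elements. You then read the row pattern off exponent arithmetic. The underlying mechanism is the same: multiplying $z'$ by anything other than $1$ gives $z$, which shows up as exponent overflow. Your version yields more, namely that in this case $S$ is determined by $r^k=s^{k'}=z$ alone, so $|M\setminus\{z\}|=kk'$. That is in the spirit of the ``defining identities'' results later in the section. The paper's argument is shorter because it only needs uniqueness of partners, not uniqueness of representations.

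One presentational point: your case split is muddled. For $i'<i$ and $j'>j$, multiplying by $r^{k-1-i'}s^{k'-1-j'}$ overflows the $r$-exponent, not the $s$-exponent. It is cleaner to drop the case split entirely. Multiplying $r^is^j=r^{i'}s^{j'}$ by $r^{k-1-i}s^{k'-1-j}$ forces $i'\le i$ and $j'\le j$, and multiplying by $r^{k-1-i'}s^{k'-1-j'}$ forces the reverse inequalities. Then you do not need Lemma~\ref{ii'j} at all.
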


\begin{proof}
Let $x = r^i s^j \in M$, where $0 \le i \le k-1$ and $0 \le j \le k'-1$ with $x\neq 1,z'$. Suppose that
\(
x u = r^{k-1}s^{k'-1} = z'
\)
for some $u = r^{i'} s^{j'} \in M$.
As $xu= r^{i+i'} s^{j+j'}$, we have $i+i'\leq k-1$ and $j+j'\leq k'-1$ and, thus,
\begin{align*}
z'r^{k-1-(i+i')}s^{k'-1-(j+j')}&=xur^{k-1-(i+i')}s^{k'-1-(j+j')}\\&=r^{i+i'} s^{j+j'}r^{k-1-(i+i')}s^{k'-1-(j+j')}=z'.
\end{align*}
Hence, $i'=k-1-i$ and $j'=k'-1-j$. 
Therefore, $u$ is uniquely determined. It follows that for every $x \in M \setminus \{z,z',1\}$ there exists a unique $u \in M \setminus \{z,z',1\}$ such that $xu = z'$. Consequently, $M$ is $A$-diagonal.
\end{proof}

Throughout the remainder of this section, we assume that
\(l_s < k'-1\)
and
\(l_r < k-1\).

\begin{lem}\label{ijl1l2}
If
\(
r^{i}s^{j} = z',
\)
for some integers $i$ and $j$ with
\(i \neq l_r\) and \(j \neq l_s,\)
then
\[
l_r < i < k-1
\quad \text{and} \quad
l_s < j < k'-1.
\]
\end{lem}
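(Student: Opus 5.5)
The plan is to compare the given relation $r^is^j=z'$ with the two defining relations
\[
r^{k-1}s^{l_s}=z', \qquad r^{l_r}s^{k'-1}=z'.
\]
Two tools do all the work: Lemma~\ref{ii'j}, and the fact that $z'$ is annihilating, so $z'x=z\neq z'$ for every $x\in S$.

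\textbf{Upper bounds.} Since $r^k=z$, $s^{k'}=z$ and $r^is^j=z'\neq z$, we have $i\le k-1$ and $j\le k'-1$. Suppose $i=k-1$. Then $r^{k-1}s^j=r^{k-1}s^{l_s}=z'$ with $j\neq l_s$, which contradicts the first part of Lemma~\ref{ii'j}. Hence $i<k-1$. Symmetrically, $j=k'-1$ would give $r^is^{k'-1}=r^{l_r}s^{k'-1}=z'$ with $i\neq l_r$, contradicting the second part of Lemma~\ref{ii'j}. Hence $j<k'-1$.

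\textbf{Lower bounds.} Suppose $i<l_r$; since $i\neq l_r$, this is the negation of $i>l_r$. Using $j<k'-1$ from the previous step, we can factor
\[
z'=r^{l_r}s^{k'-1}=(r^is^j)\,r^{l_r-i}s^{k'-1-j}=z'\cdot r^{l_r-i}s^{k'-1-j}.
\]
Here both exponents $l_r-i$ and $k'-1-j$ are positive, so $r^{l_r-i}s^{k'-1-j}\in S$. Because $z'$ is annihilating, the right-hand side equals $z$. This gives $z'=z$, a contradiction, so $i>l_r$. In the same way, if $j<l_s$, then using $i<k-1$,
\[
z'=r^{k-1}s^{l_s}=(r^is^j)\,r^{k-1-i}s^{l_s-j}=z'\cdot r^{k-1-i}s^{l_s-j}=z,
\]
again a contradiction, so $j>l_s$.

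There is no real obstacle. The only point needing care is the order of the steps: the strict upper bounds must be established first. They guarantee that the complementary factor is a genuine element of $S$ rather than $1$, and only then can the annihilating property of $z'$ be applied.
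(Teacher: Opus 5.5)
Your proof is correct and is essentially the paper's argument: Lemma~\ref{ii'j} excludes $i=k-1$ and $j=k'-1$. Then $i<l_r$ (resp.\ $j<l_s$) is ruled out by factoring $r^{l_r}s^{k'-1}$ (resp.\ $r^{k-1}s^{l_s}$) through $r^is^j=z'$ and using that $z'$ is annihilating.

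One small point about your closing remark: the lower bounds only need the non-strict bounds $i\le k-1$ and $j\le k'-1$. The strict ones are not required, because $l_r-i>0$ (resp.\ $l_s-j>0$) by itself already puts the cofactor in $S$.
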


\begin{proof}
By Lemma~\ref{ii'j}, $i\neq k-1$ and $j\neq k'-1$. Hence either $i<l_r$ or $l_r<i<k-1$, and similarly either $j<l_s$ or $l_s<j<k'-1$.

Suppose that $i<l_r$. Then
\[z'=r^{l_r}s^{k'-1}=r^{i}s^{j} r^{l_r-i}s^{k'-1-j}=z' r^{l_r-i}s^{k'-1-j}=z,\]
a contradiction. Hence $l_r<i<k-1$.

Similarly, 
we have \(
l_s<j<k'-1.
\)
\end{proof}

\begin{lem}\label{i1i2j1j2}
Suppose that
\[r^{i_1}s^{j_1}=r^{i_2}s^{j_2}=z'.\]

If
\(l_r<i_1<i_2<k-1,\)
then
\(l_s<j_2<j_1<k'-1.\)

Similarly, if
\(l_s<j_1<j_2<k'-1,\)
then
\(l_r<i_2<i_1<k-1.\)
\end{lem}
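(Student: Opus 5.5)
Suppose $r^{i_1}s^{j_1}=r^{i_2}s^{j_2}=z'$ with $l_r<i_1<i_2<k-1$. The plan has three steps: locate $j_1,j_2$ in the allowed range, exclude $j_1=j_2$, and then exclude $j_1<j_2$. The second statement is the same argument with the roles of $r$ and $s$ exchanged.

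\textbf{Step 1 (range of $j_1,j_2$).} We first show that $j_1,j_2$ lie in $(l_s,k'-1)$. Since $i_1\neq l_r$ and $i_2\neq l_r$, we would like to apply Lemma~\ref{ijl1l2}, and for that we need $j_1\neq l_s$ and $j_2\neq l_s$. Suppose instead that $j_t=l_s$. Then $r^{i_t}s^{l_s}=z'=r^{k-1}s^{l_s}$ with $i_t<k-1$, which contradicts Lemma~\ref{ii'j} (two different powers of $r$ times the same power of $s$). Hence $j_1,j_2\neq l_s$. Lemma~\ref{ijl1l2} then gives $l_s<j_1,j_2<k'-1$.

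\textbf{Step 2 ($j_1\neq j_2$).} If $j_1=j_2$, then $r^{i_1}s^{j_1}=r^{i_2}s^{j_1}=z'$ with $i_1<i_2$. This is excluded directly by Lemma~\ref{ii'j}.

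\textbf{Step 3 ($j_1<j_2$ is impossible).} This is the main step. Assume $j_1<j_2$. Then $r^{i_2}s^{j_2}$ is a proper multiple of $r^{i_1}s^{j_1}$:
\[
z'=r^{i_2}s^{j_2}=r^{i_1}s^{j_1}\cdot r^{i_2-i_1}s^{j_2-j_1}=z'\,r^{i_2-i_1}s^{j_2-j_1}.
\]
The factor $r^{i_2-i_1}s^{j_2-j_1}$ has positive exponents, so it lies in $S$. Since $z'$ is annihilating, $z'S=\{z\}$, and the right-hand side equals $z$. This gives $z'=z$, which is absurd. Therefore $j_2<j_1$, and together with Step 1, $l_s<j_2<j_1<k'-1$.

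The only delicate point is Step 1: excluding $j_t=l_s$ requires comparing with $r^{k-1}s^{l_s}=z'$ rather than with $r^{l_r}s^{k'-1}$.
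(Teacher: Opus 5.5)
Your proof is correct and follows the paper's route: Lemma~\ref{ii'j} rules out $j_1=j_2$, the annihilating property of $z'$ rules out $j_1<j_2$, and Lemma~\ref{ijl1l2} gives the range $l_s<j_t<k'-1$. Your Step~1 checks $j_t\neq l_s$, via $r^{k-1}s^{l_s}=z'$, $i_t<k-1$ and Lemma~\ref{ii'j}, before invoking Lemma~\ref{ijl1l2}; the paper's proof uses that hypothesis without verifying it.
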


\begin{proof}
By Lemma~\ref{ii'j}, the integers $j_1,j_2$ are distinct.

Since
\(r^{i_1}s^{j_1}=r^{i_2}s^{j_2}=z'\)
and $i_1<i_2$, we cannot have $j_1<j_2$, for otherwise
\[r^{i_2}s^{j_2}=r^{i_1}s^{j_1}r^{i_2-i_1}s^{j_2-j_1}=z'r^{i_2-i_1}s^{j_2-j_1}=z,\]
a contradiction. Hence $j_1>j_2$.

Therefore, by Lemma~\ref{ijl1l2},
we obtain
\[l_s<j_2<j_1<k'-1.\]

The second statement follows by symmetry.
\end{proof}

By Lemma~\ref{i1i2j1j2}, there exist integers
\[
i_1,\ldots,i_n \quad\text{and}\quad j_1,\ldots,j_n,
\]
with $n\ge 2$, such that
\[
i_1=l_r<i_2<\cdots<i_{n-1}<i_n=k-1,
\]
\[
j_1=k'-1>j_2>\cdots>j_{n-1}>j_n=l_s,
\]
and
\begin{equation}\label{z'}
z'
 = r^{i_1}s^{j_1}
 = r^{i_2}s^{j_2}
 = \cdots
 = r^{i_{n-1}}s^{j_{n-1}}
 = r^{i_n}s^{j_n}.
\end{equation}
Moreover, if $i\notin\{i_1,\ldots,i_n\}$, then there does not exist an integer $j$ such that
\(z'=r^{i}s^{j}.\)
An analogous statement holds for the positions of the occurrences of $s$ in $z'$.

\begin{lem}\label{il1jl11}
Suppose that, for some $1\leq l\leq n-1$,
\(i_l+1<i_{l+1}\) and \(j_l>j_{l+1}+1\).
Then \(r^{i_l+1}s^{j_{l+1}+1}=0\).
\end{lem}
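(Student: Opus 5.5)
The plan is to argue by contradiction, using Lemma~\ref{AnnDetOne} together with the description \eqref{z'} of all ways of writing $z'$ as $r^{i}s^{j}$. Here ``$=0$'' means equality with the zero $z$ of $M$. Put
\[
x=r^{i_l+1}s^{j_{l+1}+1},
\]
and suppose that $x\neq z$.

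The first step is to show that $x\notin\{1,z'\}$. Clearly $x\neq 1$, since $x\in S$ and $S$ is nilpotent. Next, by hypothesis $i_l<i_l+1<i_{l+1}$, so the exponent $i_l+1$ is not among $i_1,\ldots,i_n$. By the remark following \eqref{z'}, there is then no $j$ with $r^{i_l+1}s^{j}=z'$, and so $x\neq z'$. (Symmetrically, $j_{l+1}+1$ is not among the $j_m$, because $j_{l+1}<j_{l+1}+1<j_l$; this gives the same conclusion.)

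The second step applies Lemma~\ref{AnnDetOne}. It gives some $u\in M\setminus\{z,z'\}$ with $xu=z'$. Since $M$ is commutative and generated by $r,s$, we may write $u=r^{a}s^{b}$ with $a,b\ge 0$. Then
\[
z'=xu=r^{i_l+1+a}s^{j_{l+1}+1+b}.
\]
By the remark after \eqref{z'}, applied both to the exponent of $r$ and to the exponent of $s$, and by Lemma~\ref{ii'j}, the pair $(i_l+1+a,\;j_{l+1}+1+b)$ must equal $(i_m,j_m)$ for some $m$. Lemma~\ref{ii'j} is what guarantees that the $r$-exponent determines the $s$-exponent: for a given $i_m$ the only $j$ with $r^{i_m}s^{j}=z'$ is $j_m$.

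The final step compares indices. From $i_m=i_l+1+a>i_l$ and the strict monotonicity of the $i$'s we get $m\ge l+1$. The $j$'s are strictly decreasing, so $j_m\le j_{l+1}$. On the other hand, $j_m=j_{l+1}+1+b>j_{l+1}$. This is a contradiction, and hence $x=z$.

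The one delicate point is the identification of the pair of exponents with some $(i_m,j_m)$. Elements of $M$ need not have unique exponent representations, so I will lean on the statement after \eqref{z'} exactly as the paper formulates it: it is phrased in terms of exponents, namely that every exponent pair $(i,j)$ with $r^{i}s^{j}=z'$ occurs in the list \eqref{z'}. Combined with Lemma~\ref{ii'j}, this statement is what makes the comparison of indices in the final step legitimate.
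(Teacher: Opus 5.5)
Your proof is correct, but it takes a different route from the paper's. The paper inducts on $t$, showing $r^{i_{l+1}-t}s^{j}=z$ for all $j>j_{l+1}$ and $1\le t\le i_{l+1}-i_l-1$. At each stage it takes the largest $j>j_{l+1}$ with $r^{i_{l+1}-t}s^{j}\neq z$. Multiplying this element by $s$ gives $z$ by maximality of $j$. Multiplying it by $r$ gives $z$ via $r^{i_{l+1}}s^{j_{l+1}}=z'$ when $t=1$, and via the previous stage otherwise. So the element is annihilating, hence equal to $z'$ by uniqueness of the annihilating element. That contradicts $i_{l+1}-t\notin\{i_1,\ldots,i_n\}$.

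You instead use Lemma~\ref{AnnDetOne}, i.e.\ the no-zero-row hypothesis on $A$, to get a multiple of $x$ equal to $z'$ in one step. You then read the contradiction off the exponents, using exhaustiveness of \eqref{z'}, Lemma~\ref{ii'j}, and the opposite monotonicity of the $i$'s and $j$'s. Your check that $x\neq 1$ is what makes that appeal safe: for the identity the only partner is $z'$ itself, so Lemma~\ref{AnnDetOne} only has content for elements of $S$.

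Your argument is shorter, needs no induction, and proves more. The same exponent comparison gives $r^{i}s^{j}=z$ whenever $i>i_l$ and $j>j_{l+1}$, so the gap hypotheses $i_l+1<i_{l+1}$ and $j_l>j_{l+1}+1$ are not actually needed. The paper's argument uses uniqueness of the annihilating element plus nilpotency rather than the no-zero-row condition. Both are standing hypotheses of the section, and in this finite nilpotent commutative setting each implies the other: a nonzero element can be multiplied by generators until it becomes annihilating. So the difference is one of packaging, not strength.
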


\begin{proof}
We prove, by descending induction on $t$, that
\(r^{i_{l+1}-t}s^j=0\)
for all integers $j>j_{l+1}$ and all integers $t$ satisfying
\(1\leq t\leq i_{l+1}-i_l-1\).

For $t=1$, let $j$ be the largest integer greater than $j_{l+1}$ such that
\(r^{i_{l+1}-1}s^j\neq 0\).
Then
\(r^{i_{l+1}-1}s^{j+1}=0\).
Moreover,
\(r^{i_{l+1}}s^j=0,\)
since $r^{i_{l+1}}s^{j_{l+1}}=z'$ and $j>j_{l+1}$.
Hence \(r^{i_{l+1}-1}s^j=z'\),
contradicting the fact that
\(i_l<i_{l+1}-1<i_{l+1}\).
Therefore
\(r^{i_{l+1}-1}s^j=0\)
for all $j>j_{l+1}$.

Now assume that, for some
\(1\leq t<i_{l+1}-i_l-1,\)
we have
\(r^{i_{l+1}-t}s^j=0\)
for all $j>j_{l+1}$.
Let $j$ be the largest integer greater than $j_{l+1}$ such that
\(r^{i_{l+1}-(t+1)}s^j\neq 0.\)
Then
\(r^{i_{l+1}-(t+1)}s^{j+1}=0,\)
and, by the induction hypothesis,
\(r^{i_{l+1}-t}s^j=0.\)
Hence
\(r^{i_{l+1}-(t+1)}s^j=z',\)
contradicting the fact that
\(i_l<i_{l+1}-(t+1)<i_{l+1}.\)
Therefore
\(r^{i_{l+1}-(t+1)}s^j=0\)
for all $j>j_{l+1}$.

By induction,
\(r^{i_l+1}s^j=0\)
for all $j>j_{l+1}$, and in particular
\(r^{i_l+1}s^{j_{l+1}+1}=0.\)
\end{proof}

As $S$ is nilpotent, the set
\[
\mathcal{B}:=\{\,r^is^j \mid r^is^j\neq z,1\,\}
\]
is finite. We therefore define the matrices associated with $\mathcal{B}$, denoted by $A_{\mathcal{B}}$ and $A^{(1)}_{\mathcal{B}}$, respectivly, in the same way as the matrix $A$ associated with $S$. Their rows and columns are indexed by the elements of $\mathcal{B}$, and
\[
A_{\mathcal{B}}=\bigl(a_{u,v}\bigr)_{u,v\in\mathcal{B}}, A^{(1)}_{\mathcal{B}}=\bigl(a'_{u,v}\bigr)_{u,v\in\mathcal{B}},
\]
where
\[
a_{r^is^j,\;r^{i'}s^{j'}}
=
\begin{cases}
r^{i+i'}s^{j+j'}, & \text{if } r^is^jr^{i'}s^{j'}=z'\ \text{in}\ S,\\[1mm]
0, & \text{otherwise}.
\end{cases}
\]
\[
a'_{r^is^j,\;r^{i'}s^{j'}}
=
\begin{cases}
1, & \text{if } r^is^jr^{i'}s^{j'}=z'\ \text{in}\ S,\\[1mm]
0, & \text{otherwise}.
\end{cases}
\]


Let $r^is^j\in\mathcal{B}$ be such that the corresponding row of $A_{\mathcal{B}}$ contains at least two nonzero entries.
Let
\[
r^{i^{(1)}}s^{j^{(1)}},\ldots,r^{i^{(m)}}s^{j^{(m)}}\in\mathcal{B}
\]
be precisely those column indices for which the common row has a nonzero entry. For each $h\in\{1,\ldots,m\}$, write
\(
r^is^j\,r^{i^{(h)}}s^{j^{(h)}}=r^{\alpha_h}s^{\beta_h}\) in $B^+$
where
\(
\alpha_h\in\{i_1,\ldots,i_n\}\) and \(\beta_h\in\{j_1,\ldots,j_n\}\).

Reordering the columns if necessary, we may assume that
\[
\alpha_1\leq\alpha_2\leq\cdots\leq\alpha_m.
\]

We have
\[
\begin{array}{c|cccc}
 & r^{i^{(1)}}s^{j^{(1)}} & r^{i^{(2)}}s^{j^{(2)}} & \cdots & r^{i^{(m)}}s^{j^{(m)}} \\ \hline
r^is^j
& r^{\alpha_1}s^{\beta_1}
& r^{\alpha_2}s^{\beta_2}
& \cdots
& r^{\alpha_m}s^{\beta_m}
\end{array}
\]
Since
\(
\alpha_1\leq\cdots\leq\alpha_m,
\)
it follows from Lemma~\ref{i1i2j1j2} that
\(
\beta_m\leq\cdots\leq\beta_1.
\)
Moreover, for each $h\in\{1,\ldots,m\}$,
\[
i+i^{(h)}=\alpha_h
\qquad\text{and}\qquad
j+j^{(h)}=\beta_h.
\]
Hence
\(
i^{(1)}\leq i^{(2)}\leq\cdots\leq i^{(m)}
\)
and
\(
j^{(m)}\leq j^{(m-1)}\leq\cdots\leq j^{(1)}.
\)
Since the columns
\(
r^{i^{(1)}}s^{j^{(1)}},\ldots,r^{i^{(m)}}s^{j^{(m)}}
\)
are assumed to be pairwise distinct, all of the above inequalities are in fact strict. Hence
\[
\alpha_1<\alpha_2<\cdots<\alpha_m,
\qquad
\beta_m<\beta_{m-1}<\cdots<\beta_1,
\]
and
\[
i^{(1)}<i^{(2)}<\cdots<i^{(m)},
\qquad
j^{(m)}<j^{(m-1)}<\cdots<j^{(1)}.
\]

Suppose that there exist an integer \(\alpha\in\{i_1,\ldots,i_n\}\) and an index \(1\leq t\leq m-1\) such that
\(\alpha_t<\alpha<\alpha_{t+1}\).
Let \(\beta\in\{j_1,\ldots,j_n\}\)
be the unique integer such that
\(z'=r^{\alpha}s^{\beta}\) and thus \(\beta_t>\beta>\beta_{t+1}\).

Since
\[r^is^j\,r^{\,i^{(t)}+(\alpha-\alpha_t)}s^{\,j^{(t)}+(\beta-\beta_t)}
=
r^{\alpha}s^{\beta},\]
the entry in the row indexed by $r^is^j$ and the column indexed by
\[
r^{\,i^{(t)}+(\alpha-\alpha_t)}s^{\,j^{(t)}+(\beta-\beta_t)}
\]
is nonzero. However, \(\alpha_t<\alpha<\alpha_{t+1}\), and hence
\(i^{(t)}<i^{(t)}+(\alpha-\alpha_t)<i^{(t+1)}\),
which contradicts the fact that
\(r^{i^{(1)}}s^{j^{(1)}},\ldots,r^{i^{(m)}}s^{j^{(m)}}\)
are precisely the column indices corresponding to the nonzero entries of the row indexed by $r^is^j$.

Therefore, no such $\alpha$ exists. By symmetry, no such $\beta$ exists. 
It follows that there exists an integer $l$ such that
\[
(\alpha_1,\ldots,\alpha_m)
=
(i_l,\ldots,i_{l+m-1})
\]
and
\[
(\beta_1,\ldots,\beta_m)
=
(j_l,\ldots,j_{l+m-1}).
\]

The following lemma follows readily from the preceding discussion and the description of the annihilator elements in~(\ref{z'}) and will be used to determine the number of nonzero entries in the rows of $A_{\mathcal B}$.

Define
\[
i_0=
\begin{cases}
0, & \text{if } i_1\neq0,\\
i_1, & \text{if } i_1=0,
\end{cases}
\qquad
j_{n+1}=
\begin{cases}
0, & \text{if } j_n\neq0,\\
j_n, & \text{if } j_n=0.
\end{cases}
\]

\begin{lem}\label{ijNonZeroEntry}
Let $r^is^j$ index a row of $A_{\mathcal B}$.

\begin{enumerate}
\item If $(i,j)=(i_\alpha,j_\alpha)$ for some $1\leq\alpha\leq n$, then the corresponding row has no nonzero entries.

\item If $i=i_n$ with $j_n\neq 0$, then the corresponding row has exactly one nonzero entry. This occurs only when $0\leq j<l_s$.

\item If $j=j_1$ with $i_1\neq 0$, then the corresponding row has exactly one nonzero entry. This occurs only when $0\leq i<l_r$.

\item Assume that $i\neq i_n$ and $j\neq j_1$. 
Choose integers $0\leq\alpha<n$ and $0<\beta\leq n$ such that
\[
i_\alpha\leq i<i_{\alpha+1}
\qquad\text{and}\qquad
j_{\beta+1}\leq j<j_\beta.
\]
Then:
\begin{enumerate}
\item If $i=i_\alpha$ and $j=j_{\beta+1}$, then necessarily $\alpha\leq\beta$, and the row indexed by $r^is^j$ has $\beta-\alpha+2$ nonzero entries.

\item If exactly one of the equalities $i=i_\alpha$ and $j=j_{\beta+1}$ holds, then necessarily $\alpha\leq\beta$, and the row indexed by $r^is^j$ has $\beta-\alpha+1$ nonzero entries.

\item If $i\neq i_\alpha$ and $j\neq j_{\beta+1}$, then necessarily $\alpha<\beta$, and the row indexed by $r^is^j$ has $\beta-\alpha$ nonzero entries.
\end{enumerate}
\end{enumerate}
\end{lem}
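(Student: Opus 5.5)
The plan is to reduce everything to counting admissible ``shifts.'' Fix a row index $r^is^j$ with $r^is^j\neq z,1$. A column $r^{i'}s^{j'}\in\mathcal B$ carries a nonzero entry exactly when $r^{i+i'}s^{j+j'}=z'$. By \eqref{z'} and the remark following it, every factorisation of $z'$ as $r^as^b$ has $(a,b)=(i_l,j_l)$ for some $l$. So the nonzero entries of the row correspond bijectively to those indices $l\in\{1,\dots,n\}$ with $i\le i_l$ and $j\le j_l$, via $(i',j')=(i_l-i,\,j_l-j)$.

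We must also discard the indices that give an excluded column. The only excluded case is $(i',j')=(0,0)$, which would be the identity column. Beyond that, we must check that $r^{i'}s^{j'}\neq z$; this holds because $r^{i'}s^{j'}$ divides $z'\ne z$. Hence the number of nonzero entries equals
\[
N(i,j)=\bigl|\{\,l : i\le i_l,\ j\le j_l\,\}\bigr|-[\,(i,j)\in\{(i_l,j_l)\}\,].
\]
Since the $i_l$ increase and the $j_l$ decrease, the set $\{l : i\le i_l,\ j\le j_l\}$ is an interval $\{a,\dots,b\}$. Here $a$ is the least $l$ with $i_l\ge i$, and $b$ is the largest $l$ with $j_l\ge j$. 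This is exactly the contiguity phenomenon established in the discussion preceding the lemma, and I will simply count this interval.

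Case (1): if $(i,j)=(i_\alpha,j_\alpha)$, then the condition $i_\alpha\le i_l$ forces $l\ge\alpha$, and $j_\alpha\le j_l$ forces $l\le\alpha$. So only $l=\alpha$ survives, and that index is removed because it is the identity column, leaving zero entries.

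Case (2): if $i=i_n=k-1$, only $l=n$ is possible. It needs $j\le j_n=l_s$, and we need $j\neq j_n$, since otherwise we are in case (1). This gives exactly one entry, and it requires $j<l_s$. For the ``only when'' assertion, we also need the converse: with $i=i_n$ and $j\ge l_s$, either $r^is^j\in\{z,z'\}$ or the row is empty. Hence such a row indexes $\mathcal B$ with a nonzero entry only when $0\le j<l_s$.

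Case (3) is symmetric, using $j=j_1=k'-1$.

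Case (4): with $i_\alpha\le i<i_{\alpha+1}$ and $j_{\beta+1}\le j<j_\beta$, the interval runs from $a=\alpha$ if $i=i_\alpha$, and from $a=\alpha+1$ otherwise. It runs to $b=\beta+1$ if $j=j_{\beta+1}$, and to $b=\beta$ otherwise. The interval length $b-a+1$ is therefore $\beta-\alpha+2$, $\beta-\alpha+1$, or $\beta-\alpha$ in the three subcases. No subtraction for the identity column is needed: that would require $(i,j)=(i_l,j_l)$, which is impossible here. In subcase (a) it would need $\alpha=\beta+1$, contradicting the fact that the row exists.

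The inequalities $\alpha\le\beta$ (respectively $\alpha<\beta$) follow from the row being nonzero, which Lemma~\ref{AnnDetOne} guarantees for rows of $A$. Alternatively, they follow from $r^is^j\ne z$. Indeed, if the interval were empty, the element $r^is^j$ would divide no factorisation of $z'$. A maximality argument like that in Lemma~\ref{il1jl11} would then force $r^is^j$ to be $z$ or annihilating: multiply by $r$ and $s$ until the next step gives $z$, and the last nonzero element is annihilating, hence equal to $z'$.

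I expect this last point to be the main obstacle: justifying rigorously that every nonzero, non-identity element $r^is^j$ divides some $r^{i_l}s^{j_l}$ in exponent form. This must use the normal forms $r^is^j$ with $i<k$, $j<k'$, together with uniqueness of $z'$. It also pins down the boundary conventions $i_0$ and $j_{n+1}$ used to choose $\alpha$ and $\beta$.
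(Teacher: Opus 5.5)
Your strategy is the right one and matches the paper's. The paper gives no separate argument; it only points to the contiguity discussion preceding the lemma and to \eqref{z'}. Your displayed count
\[
N(i,j)=\bigl|\{\,l\in\{1,\dots,n\} : i\le i_l,\ j\le j_l\,\}\bigr|-[\,(i,j)\in\{(i_l,j_l)\}\,]
\]
is correct, and so are your cases (1)--(3).

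\textbf{The gap.} It lies in turning $N(i,j)$ into the formulas of (4). You set $a=\alpha$ whenever $i=i_\alpha$, and $b=\beta+1$ whenever $j=j_{\beta+1}$. But $i_0$ and $j_{n+1}$ are not exponents of any factorization of $z'$: there is no index $l=0$ or $l=n+1$.
\begin{itemize}
\item If $i_1\neq0$ and $i=0$, then $\alpha=0$ and $i=i_0$, yet the interval starts at $l=1=\alpha+1$.
\item If $j_n\neq0$ and $j=0$, then $\beta=n$ and $j=j_{n+1}$, yet the interval ends at $l=n=\beta$.
\end{itemize}
In these configurations the row has one nonzero entry fewer than (4)(a) or (4)(b) predicts.

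Concretely, suppose $l_r>0$ and $l_s>0$, and take the row $s^{l_s}$. It has $\alpha=0$, $i=i_0$, $\beta=n-1$ and $j=j_n$, so (4)(a) predicts $n+1$ entries. In fact only the $n$ columns $r^{i_l}s^{j_l-l_s}$, $1\le l\le n$, are nonzero. An example with $\det A\neq0$ is
\[
r^4=s^4=r^2s^2=z,\qquad r^2=s^2,\qquad z'=r^3s=rs^3 .
\]
There the row $s$ has nonzero entries only in the columns $rs^2$ and $r^3$, not in three columns.

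\textbf{What this means.} The statement itself is off by one at these two boundaries, so your argument cannot prove it verbatim. What your argument does prove is a corrected version: the extra $+1$ for $i=i_\alpha$ counts only when $\alpha\ge1$, and the extra $+1$ for $j=j_{\beta+1}$ counts only when $\beta\le n-1$. The paper's later lemmas, e.g.\ Lemma~\ref{n4-2}, rely on this corrected count when they assert rows with exactly one nonzero entry. You should state and prove that version.

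Two smaller points.
\begin{itemize}
\item For $n\ge3$, the rows $(i_\alpha,j_\alpha)$ with $2\le\alpha\le n-1$ satisfy the hypotheses of (4), with $\beta=\alpha-1$. So (4) must explicitly exclude the rows of (1). Your phrase ``contradicting the fact that the row exists'' is not the right reason: that row is $z'$, it does exist, and it is simply empty.
\item The step you flag as the main obstacle is not one. Your maximality sketch works as written: the element reached is killed by $r$ and by $s$, hence is annihilating, hence equals $z'$. Alternatively, Lemma~\ref{AnnDetOne} together with the remark after \eqref{z'} gives the same conclusion directly.
\end{itemize}
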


Since the semigroup $S$ is generated by $B$, there is a natural surjective homomorphism from $B^{+}$ onto $S$. Thus, the matrix $A_{\mathcal B}$ associated with $B^{+}$ can be used to derive properties of the matrix $A$ associated with $S$.
The following lemma illustrates this approach. For example, if two rows of $A_{\mathcal B}$ each have exactly one nonzero entry, occurring in the same column, then the corresponding elements of $B^{+}$ must represent the same element of $S$; otherwise, the corresponding rows of $A$ would each have exactly one nonzero entry in the same column, forcing $\det A=0$. More generally, if a linear dependence among rows of $A_{\mathcal B}$ is preserved under the natural homomorphism from $B^{+}$ onto $S$, then the corresponding rows of $A$ remain linearly dependent. Consequently, $\det A=0$, a contradiction.

\begin{lem}\label{n4}
Suppose that there exists an integer $1\leq \alpha<n$ such that the following conditions hold:
\begin{enumerate}
\item either
\(i_{\alpha+1}-i_{\alpha}\leq i_1,\)
or there exists an integer $1\leq \alpha_1<n$ such that
\(i_{\alpha+1}-i_{\alpha}<i_{\alpha_1+1}-i_{\alpha_1};\)

\item either
\(j_{\alpha}-j_{\alpha+1}\leq j_n,\)
or there exists an integer $1\leq \alpha_2<n$ such that
\(j_{\alpha}-j_{\alpha+1}<j_{\alpha_2}-j_{\alpha_2+1}.\)
\end{enumerate}

Then $\det A=0$.
\end{lem}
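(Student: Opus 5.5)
The plan is to exhibit a linear dependence among columns (or rows) of $A$ that survives the passage from $B^{+}$ to $S$, in the spirit of the remark preceding the lemma. Put $d=i_{\alpha+1}-i_\alpha$ and $e=j_\alpha-j_{\alpha+1}$. By \eqref{z'} we have $z'=r^{i_\alpha}s^{j_\alpha}=r^{i_{\alpha+1}}s^{j_{\alpha+1}}$. Hence the element $w=r^{i_\alpha}s^{j_{\alpha+1}}$ satisfies $w\,r^{d}=z'=w\,s^{e}$. So the column of $w$ has nonzero entries in the rows indexed by $r^{d}$ and $s^{e}$.

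The first key step is to determine the full supports of the rows indexed by $r^d$ and by $s^e$, using Lemma~\ref{ijNonZeroEntry} together with the chain \eqref{z'}. A product $r^d\cdot r^{a}s^{b}$ equals $z'$ exactly when $(d+a,b)=(i_\beta,j_\beta)$ for some $\beta$ with $i_\beta\ge d$. So the support of row $r^d$ is $\{r^{i_\beta-d}s^{j_\beta}: i_\beta\ge d,\ r^{i_\beta-d}s^{j_\beta}\neq z\}$, and similarly for $s^e$. Condition~(1) is used here.
\begin{itemize}
\item If $d\le i_1$, then every $\beta$ contributes. In that case I would show that the nonzero positions of row $r^d$ can be matched bijectively with those of another row, for instance the row of $r^{d}$ compared with a shifted monomial.
\item If instead $d<i_{\alpha_1+1}-i_{\alpha_1}$, I would apply Lemma~\ref{il1jl11} (or the descending-induction argument in its proof) at the larger gap $\alpha_1$. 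This should show that the candidate column $r^{i_{\alpha_1+1}-d}s^{j_{\alpha_1+1}}$ also lies in $\mathcal B$ and pairs with $r^{i_{\alpha_1}}s^{j_{\alpha_1}}$-type witnesses. In particular, no new nonzero entries appear that would break the dependence.
\end{itemize}
Condition~(2) is used symmetrically, with the roles of $r$ and $s$ exchanged, for the row $s^e$.

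The second step assembles the dependence. I aim to show that the submatrix of $A$ formed by the rows indexed by $r^d$ and $s^e$ has the following property: on the set of columns where either row is nonzero, the two rows (after scaling by cocycle values) have identical zero patterns and can be combined. Alternatively, two columns $w$ and $w'$ should have the same one- or two-element support. Then some nontrivial combination of rows (or columns) of $A$ vanishes, which forces $\det A=0$. Because only the zero–nonzero pattern and coincidences of products in $S$ are used, this passes from $A_{\mathcal B}$ to $A$ via the surjection $B^{+}\to S$. If two of the relevant elements of $B^{+}$ collapse to the same element of $S$, the corresponding rows coincide up to a scalar, which again gives $\det A=0$.

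The main obstacle is the first step in the case where condition~(1) holds via a larger gap $\alpha_1$ rather than via $d\le i_1$. Here one must control exactly which monomials $r^{a}s^{b}$ survive (are $\neq z$) near the ends of the chain \eqref{z'}, so that the supports really match. My first attempt will be to replay the descending induction of Lemma~\ref{il1jl11} at the index $\alpha_1$, with the shift $d$ in place of $1$. If that fails, I would fall back to counting nonzero entries with Lemma~\ref{ijNonZeroEntry} and use a pigeonhole argument: exhibit more rows than available columns within a block closed under the relevant supports.
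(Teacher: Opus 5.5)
\textbf{There is a genuine gap: the dependence your step two aims for does not exist.}

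Your starting point is right. With $d=i_{\alpha+1}-i_\alpha$, $e=j_\alpha-j_{\alpha+1}$ and $w=r^{i_\alpha}s^{j_{\alpha+1}}$, the row of $w$ (equivalently its column, by commutativity) is nonzero exactly at $r^d$ and $s^e$. This is Lemma~\ref{ijNonZeroEntry}(4)(a) with $\beta=\alpha$.

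The rows indexed by $r^d$ and $s^e$ do not have matching zero patterns. Take $n=2$ with $d\le i_1$ and $e\le j_n$. Row $r^d$ is supported on $\{r^{i_1-d}s^{j_1},\,w\}$ and row $s^e$ on $\{w,\,r^{i_n}s^{j_n-e}\}$. These three elements are pairwise distinct in $S$, as their products with $r^d$ and with $s^e$ show. So no rescaling combines the two rows to zero.

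Your alternative, two columns sharing a two-element support, would not force $\det A=0$ either. The cocycle values are arbitrary, and two vectors in a $2$-dimensional coordinate space can be independent. Only a shared one-element support would suffice, and you never produce one.

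Your remark that words of $B^{+}$ collapsing in $S$ give proportional rows is backwards. Such words index the same row of $A$, so a collapse produces no dependence. What you must check instead is that the elements you use stay distinct in $S$.

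\textbf{The missing idea is a three-vectors-in-a-plane argument.} Conditions (1) and (2) exist to supply rows with exactly one nonzero entry, located in column $r^d$ and in column $s^e$ respectively.

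For column $r^d$:
\begin{itemize}
\item if $d\le i_1$, take $r^{i_1-d}s^{j_1}$, which has one entry by Lemma~\ref{ijNonZeroEntry}(3);
\item otherwise take $r^{i_{\alpha_1+1}-d}s^{j_{\alpha_1+1}}$. Its $r$-exponent lies strictly between $i_{\alpha_1}$ and $i_{\alpha_1+1}$, so Lemma~\ref{ijNonZeroEntry}(4)(b) gives a single entry.
\end{itemize}
Symmetrically, for column $s^e$ take $r^{i_n}s^{j_n-e}$ or $r^{i_{\alpha_2}}s^{j_{\alpha_2}-e}$.

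These two rows and the row of $w$ are all supported inside the two columns $r^d, s^e$. Three vectors in a $2$-dimensional coordinate space are linearly dependent whatever the cocycle values, so $\det A=0$. None of these rows is the identity: $w=1$ would force $n=2$, $\alpha=1$ and $i_1=0$, which contradicts (1).

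You do name $r^{i_{\alpha_1+1}-d}s^{j_{\alpha_1+1}}$, but not the fact that matters about it: its only nonzero entry is in column $r^d$. No descending induction as in Lemma~\ref{il1jl11} is needed. Your pigeonhole fallback has the right shape, but the proposal never identifies this specific $3\times 2$ block.
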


\begin{proof}
We consider the four possible cases separately.
\begin{enumerate}
\item $i_{\alpha+1}-i_\alpha\leq i_1$ and $j_\alpha-j_{\alpha+1}\leq j_n$.

Then, $i_1,j_n\neq 0$ and thus $i_\alpha, j_{\alpha+1}\neq 0$.
In this case, the rows indexed by
\[
r^{i_n}s^{j_n-j_\alpha+j_{\alpha+1}},
\qquad
r^{i_\alpha}s^{j_{\alpha+1}},
\qquad
r^{i_1-i_{\alpha+1}+i_\alpha}s^{j_1}
\]
contain the submatrix
\[
\begin{array}{c|cc}
& s^{j_\alpha-j_{\alpha+1}} & r^{i_{\alpha+1}-i_\alpha} \\ \hline
r^{i_n}s^{j_n-j_\alpha+j_{\alpha+1}}
& r^{i_n}s^{j_n} & 0 \\
r^{i_\alpha}s^{j_{\alpha+1}}
& r^{i_\alpha}s^{j_\alpha}
& r^{i_{\alpha+1}}s^{j_{\alpha+1}} \\
r^{i_1-i_{\alpha+1}+i_\alpha}s^{j_1}
& 0
& r^{i_1}s^{j_1}
\end{array}
\]
of $A_{\mathcal B}$, since none of its rows is indexed by the identity element.
By Lemma~\ref{ijNonZeroEntry}, the first and third rows each have exactly one nonzero entry, whereas the second row has exactly two nonzero entries in the matrix $A_{\mathcal B}$. Hence these rows are linearly dependentin the matrix $A$, and therefore $\det A=0$.

\item $i_{\alpha+1}-i_\alpha\leq i_1$ and there exists an integer $1\leq\alpha_2<n$ such that
\[
j_\alpha-j_{\alpha+1}
<
j_{\alpha_2}-j_{\alpha_2+1}.
\]

Then, $i_1\neq 0$ and thus $i_\alpha\neq 0$.
In this case, the rows indexed by
\[
r^{i_{\alpha_2}}s^{j_{\alpha_2}-j_\alpha+j_{\alpha+1}},
\qquad
r^{i_\alpha}s^{j_{\alpha+1}},
\qquad
r^{i_1-i_{\alpha+1}+i_\alpha}s^{j_1}
\]
contain the submatrix
\[
\begin{array}{c|cc}
& s^{j_\alpha-j_{\alpha+1}} & r^{i_{\alpha+1}-i_\alpha} \\ \hline
r^{i_{\alpha_2}}s^{j_{\alpha_2}-j_\alpha+j_{\alpha+1}}
& r^{i_{\alpha_2}}s^{j_{\alpha_2}} & 0 \\
r^{i_\alpha}s^{j_{\alpha+1}}
& r^{i_\alpha}s^{j_\alpha}
& r^{i_{\alpha+1}}s^{j_{\alpha+1}} \\
r^{i_1-i_{\alpha+1}+i_\alpha}s^{j_1}
& 0
& r^{i_1}s^{j_1}
\end{array}.
\]

Since
\[
j_{\alpha_2+1}
<
j_{\alpha_2}-j_\alpha+j_{\alpha+1}
<
j_{\alpha_2},
\]
Lemma~\ref{ijNonZeroEntry} implies that the first row has exactly one nonzero entry in the matrix $A_{\mathcal B}$. Similarly, the third row has exactly one nonzero entry, while the second row has exactly two nonzero entries. Hence these rows are linearly dependent, and therefore $\det A=0$.

\item $j_\alpha-j_{\alpha+1}\leq j_n$ and there exists an integer $1\leq \alpha_1<n$ such that
\(i_{\alpha+1}-i_{\alpha}<i_{\alpha_1+1}-i_{\alpha_1}.\)

The proof is analogous to that of {\rm(2)}, replacing the third row by
\[
r^{i_{\alpha_1+1}-i_{\alpha+1}+i_\alpha}s^{j_{\alpha_1+1}}.
\]
Since
\[
i_{\alpha_1}
<
i_{\alpha_1+1}-i_{\alpha+1}+i_\alpha
<
i_{\alpha_1+1},
\]
Lemma~\ref{ijNonZeroEntry} shows that the third row has exactly one nonzero entry  in the matrix $A_{\mathcal B}$. Therefore $\det A=0$.

\item There exist integers $1\leq\alpha_1,\alpha_2<n$ such that
\[
i_{\alpha+1}-i_\alpha<i_{\alpha_1+1}-i_{\alpha_1}
\quad\text{and}\quad
j_\alpha-j_{\alpha+1}<j_{\alpha_2}-j_{\alpha_2+1}.
\]

By combining the arguments of {\rm(2)} and {\rm(3)}, we see that the first and third rows each have exactly one nonzero entry, while the second row has exactly two nonzero entries, provided that none of them is indexed by the identity element. The corresponding submatrix of $A_{\mathcal B}$ is
\[
\begin{array}{c|cc}
 & s^{j_\alpha-j_{\alpha+1}} & r^{i_{\alpha+1}-i_\alpha} \\ \hline
r^{i_{\alpha_2}}s^{j_{\alpha_2}-j_\alpha+j_{\alpha+1}}
& r^{i_{\alpha_2}}s^{j_{\alpha_2}} & 0 \\
r^{i_\alpha}s^{j_{\alpha+1}}
& r^{i_\alpha}s^{j_\alpha}
& r^{i_{\alpha+1}}s^{j_{\alpha+1}} \\
r^{i_{\alpha_1+1}-i_{\alpha+1}+i_\alpha}s^{j_{\alpha_1+1}}
& 0
& r^{i_{\alpha_1+1}}s^{j_{\alpha_1+1}}
\end{array}
\]

It remains to verify that these rows are not indexed by the identity element. Since
\[
j_{\alpha_2}-j_\alpha+j_{\alpha+1}>0
\quad\text{and}\quad
i_{\alpha_1+1}-i_{\alpha+1}+i_\alpha>0,
\]
the first and third rows are not indexed by the identity element. The second row is indexed by the identity element only if
\(n=2\) and \(\alpha=1.\)
However, in this case $\alpha_1=\alpha_2=1$, contradicting the assumptions.

Hence these rows are linearly dependent, and therefore $\det A=0$.
\end{enumerate}
\end{proof}

By Lemma~\ref{n4}, we obtain the following corollary under the assumption that $\det A\neq 0$.

\begin{cor}\label{n4cor}
Assume that $\det A\neq0$, and let
\[l^{(1)}=\max\{i_{\alpha+1}-i_\alpha\mid 1\leq\alpha<n\},
\qquad
l^{(2)}=\max\{j_\alpha-j_{\alpha+1}\mid 1\leq\alpha<n\}.\]
Then, for every $1\leq\alpha<n$, the following hold:
\begin{enumerate}
\item
If
\(i_{\alpha+1}-i_\alpha<l^{(1)}\)
or
\(i_{\alpha+1}-i_\alpha\leq i_1,\)
then
\(j_\alpha-j_{\alpha+1}=l^{(2)}\)
and
\(j_n<l^{(2)}.\)

\item
If
\(j_\alpha-j_{\alpha+1}<l^{(2)}\)
or
\(j_\alpha-j_{\alpha+1}\leq j_n,\)
then
\(i_{\alpha+1}-i_\alpha=l^{(1)}\)
and
\(i_1<l^{(1)}.\)
\end{enumerate}
\end{cor}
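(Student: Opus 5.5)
The corollary is the contrapositive of Lemma~\ref{n4}, read through the definitions of $l^{(1)}$ and $l^{(2)}$ as maxima. I will prove (1); (2) follows by the symmetric argument, exchanging the roles of $r$ and $s$, of $i$ and $j$, and of $i_1$ and $j_n$.

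Assume $\det A\neq0$ and fix $1\le\alpha<n$ with $i_{\alpha+1}-i_\alpha<l^{(1)}$ or $i_{\alpha+1}-i_\alpha\le i_1$.

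First, condition (1) of Lemma~\ref{n4} holds for this $\alpha$. In the second case this is immediate. In the first case, choose $\alpha_1$ with $i_{\alpha_1+1}-i_{\alpha_1}=l^{(1)}$; this gives the strict inequality $i_{\alpha+1}-i_\alpha<i_{\alpha_1+1}-i_{\alpha_1}$.

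Since $\det A\neq0$, Lemma~\ref{n4} says that condition (2) must fail. So both $j_\alpha-j_{\alpha+1}>j_n$ and $j_\alpha-j_{\alpha+1}\ge j_{\alpha_2}-j_{\alpha_2+1}$ hold for every $1\le\alpha_2<n$.

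The second family of inequalities says $j_\alpha-j_{\alpha+1}$ is at least every gap. Since $l^{(2)}$ is the maximal gap, this gives $j_\alpha-j_{\alpha+1}=l^{(2)}$. The first inequality then becomes $j_n<l^{(2)}$. This proves (1).

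The only step needing care is checking that Lemma~\ref{n4} applies with the index $\alpha_1$ chosen as a maximizer. In the strict case $i_{\alpha+1}-i_\alpha<l^{(1)}$, this maximizer is automatically different from $\alpha$, so the lemma's hypothesis is met. No further computation is needed. In particular, the non-identity-row issues in case (4) of Lemma~\ref{n4} are already handled inside that lemma.
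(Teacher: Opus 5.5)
Your proof is correct and is the argument the paper intends. The paper only states that the corollary follows from Lemma~\ref{n4}. Your contrapositive reading supplies that omitted step: choosing $\alpha_1$ as a maximizer of the $i$-gaps gives condition (1) of the lemma, and negating condition (2) forces $j_\alpha-j_{\alpha+1}=l^{(2)}$ and $j_n<l^{(2)}$. Part (2) then follows symmetrically.
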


The previous corollary implies that if $j_n\geq l^{(2)}$, then
\(i_{\alpha+1}-i_\alpha=l^{(1)}\)
for every $1\leq\alpha<n$, and moreover $i_1<l^{(1)}$. Dually, if $i_1\geq l^{(1)}$, then
\(j_\alpha-j_{\alpha+1}=l^{(2)}\)
for every $1\leq\alpha<n$, and moreover $j_n<l^{(2)}$. Then,
\(j_n\geq l^{(2)}\) and \(i_1\geq l^{(1)}\)
cannot hold simultaneously.

\begin{lem}\label{n4-2}
Assume that $\det A\neq0$.
\begin{enumerate}
\item Fix an integer $0\leq\alpha\leq n-2$, and define
\[
i^{(\alpha)}
=
\begin{cases}
\min\{\,i_n-i_{n-1}-1,\; i_1\,\}, & \text{if }\alpha=0,\\[1mm]
\min\{\,i_n-i_{n-1}-1,\; i_{\alpha+1}-i_\alpha-1\,\}, & \text{if }1\leq\alpha\leq n-2,
\end{cases}
\]
and
\[
j^{(\alpha)}
=
\min\{\,j_n,\; j_{\alpha+1}-j_{\alpha+2}-1\,\}.
\]
Then
\[
r^{i_n-i^{(\alpha)}}s^{j_n-j^{(\alpha)}}
=
r^{i_{\alpha+1}-i^{(\alpha)}}s^{j_{\alpha+1}-j^{(\alpha)}}
\]
in $S$.

\item Fix $1\leq\alpha\leq n-1$, and define
\[
i_{(\alpha)}
=
\min\{ i_1,\; i_{\alpha+1}-i_{\alpha}-1\}.
\]\[
j_{(\alpha)}
=
\begin{cases}
\min\{ j_1-j_2-1,\; j_{n}\}, & \text{if }\alpha=n-1,\\[1mm]
\min\{ j_1-j_2-1,\; j_{\alpha+1}-j_{\alpha+2}-1\},& \text{if }1\leq\alpha\leq n-2,
\end{cases}\]
Then
\[
r^{i_1-i_{(\alpha)}}s^{j_1-j_{(\alpha)}}
=
r^{i_\alpha-i_{(\alpha)}}s^{j_\alpha-j_{(\alpha)}}
\] in $S$.

\item Fix distinct integers $0\leq\alpha,\beta\leq n-2$, and define
 \[
i_{\alpha,\beta}
=
\min\{\,i_{\alpha+1}-i_\alpha-1,\;
i_{\beta+1}-i_\beta-1\,\},
\]
\[
j_{\alpha,\beta}
=
\min\{\,j_{\alpha+1}-j_{\alpha+2}-1,\;
j_{\beta+1}-j_{\beta+2}-1\,\}.
\]
Then
\[
r^{i_{\alpha+1}-i_{\alpha,\beta}}
s^{j_{\alpha+1}-j_{\alpha,\beta}}
=
r^{i_{\beta+1}-i_{\alpha,\beta}}
s^{j_{\beta+1}-j_{\alpha,\beta}}
\]
in $S$.
\end{enumerate}
\end{lem}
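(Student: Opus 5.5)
Each part follows the same template as Lemma~\ref{n4}: exhibit two rows of $A_{\mathcal B}$ that each have exactly one nonzero entry, both in the same column. If the two elements of $B^+$ indexing these rows were distinct in $S$, the corresponding rows of $A$ would be proportional, since each has a single nonzero entry in a common column. That would force $\det A=0$, contradicting the hypothesis. Hence the two elements must coincide in $S$, which is exactly the claimed identity.

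For part (1), take $u=r^{i_n-i^{(\alpha)}}s^{j_n-j^{(\alpha)}}$ and $v=r^{i_{\alpha+1}-i^{(\alpha)}}s^{j_{\alpha+1}-j^{(\alpha)}}$, and the common column $w=r^{i^{(\alpha)}}s^{j^{(\alpha)}}$. Then $uw=r^{i_n}s^{j_n}=z'$ and $vw=r^{i_{\alpha+1}}s^{j_{\alpha+1}}=z'$ by \eqref{z'}. First I must check that $w\neq 1$. This can fail only if $i^{(\alpha)}=j^{(\alpha)}=0$, and in that case $u=z'$, so nothing is claimed; I will handle it as a trivial case. I must also check that $w\neq z$, which is automatic since $w$ divides $z'$.

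Then I apply Lemma~\ref{ijNonZeroEntry} to count the nonzero entries in rows $u$ and $v$.

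\begin{itemize}
\item \emph{Row $u$.} The exponent of $r$ satisfies $i_{n-1}<i_n-i^{(\alpha)}\le i_n$, because $i^{(\alpha)}\le i_n-i_{n-1}-1$. The exponent of $s$ satisfies $0\le j_n-j^{(\alpha)}\le j_n$, because $j^{(\alpha)}\le j_n$. So $u$ falls in case (2), i.e.\ $i=i_n$, or in case (4)(b)/(c) with $\alpha=n-1$ and $\beta=n$, and in each case the row has a single nonzero entry.
\item \emph{Row $v$.} The bounds $i^{(\alpha)}\le i_{\alpha+1}-i_\alpha-1$ (or $\le i_1$ when $\alpha=0$) and $j^{(\alpha)}\le j_{\alpha+1}-j_{\alpha+2}-1$ place $v$ with $i_\alpha<i\le i_{\alpha+1}$ and $j_{\alpha+2}<j\le j_{\alpha+1}$. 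When $\alpha=0$, the convention for $i_0$ is used. Then case (4)(b) or (c) of Lemma~\ref{ijNonZeroEntry} gives $(\alpha+1)-(\alpha+1)+1$ or fewer nonzero entries, i.e.\ exactly one, since $vw=z'$.
\end{itemize}

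The two rows differ as elements of $B^+$, because their $r$-exponents lie in disjoint windows. If they differed in $S$, $\det A=0$ would follow; hence they are equal in $S$.

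Parts (2) and (3) are the same argument with the roles arranged symmetrically. For (2), the rows are $r^{i_1-i_{(\alpha)}}s^{j_1-j_{(\alpha)}}$, which lies in the window near $(i_1,j_1)$, and $r^{i_\alpha-i_{(\alpha)}}s^{j_\alpha-j_{(\alpha)}}$, with common column $r^{i_{(\alpha)}}s^{j_{(\alpha)}}$. Lemma~\ref{ijNonZeroEntry}(3)/(4) gives single entries, using $i_{(\alpha)}\le i_1$ and $j_{(\alpha)}\le j_1-j_2-1$ for the first row, and the gap bounds at index $\alpha$ for the second. For (3), the rows are indexed at $\alpha+1$ and $\beta+1$ with the common column $r^{i_{\alpha,\beta}}s^{j_{\alpha,\beta}}$, and the minima guarantee that both rows sit strictly inside the appropriate gaps.

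The main obstacle is the careful bookkeeping with Lemma~\ref{ijNonZeroEntry}. Each row must land in a sub-case giving exactly one nonzero entry, including the boundary conventions for $i_0$ and $j_{n+1}$ and the subtracted offsets being $0$. The index-$0$ cases also need separate care, as does checking that no row is indexed by $1$ and that the two indexing words genuinely differ in $B^+$.
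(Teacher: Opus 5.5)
Your template is the paper's own: two rows of $A_{\mathcal B}$ each have a single nonzero entry in a common column, so $\det A\neq0$ forces the two indexing words to coincide in $S$. Your verification of part (1) is correct. It is safer to count directly than to match sub-cases of Lemma~\ref{ijNonZeroEntry}: the row $r^as^b$ meets exactly those $t$ with $i_t\ge a$, $j_t\ge b$, $(i_t,j_t)\neq(a,b)$. That lemma's boundary conventions miscount; for instance, its case (4)(b) with $\alpha=n-1$, $\beta=n$ predicts two entries for $r^{i}s^{0}$ with $i_{n-1}<i<i_n$ and $j_n\neq0$, whereas there is one.

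\textbf{The gap is in part (2).} You claim that ``the gap bounds at index $\alpha$'' make the row $v=r^{i_\alpha-i_{(\alpha)}}s^{j_\alpha-j_{(\alpha)}}$ single-entry. That would need $i_{(\alpha)}\le i_\alpha-i_{\alpha-1}-1$ and $j_{(\alpha)}\le j_\alpha-j_{\alpha+1}-1$. But $i_{(\alpha)}$ and $j_{(\alpha)}$ are built from $i_{\alpha+1}-i_\alpha$ and $j_{\alpha+1}-j_{\alpha+2}$, which are the gaps around index $\alpha+1$. A warning sign: for $\alpha=1$ the two sides of (2) are literally the same word.

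In fact (2) as printed is false. Let $S=\langle r,s\rangle$ have nonzero elements $r,r^2,r^3,r^4=s^3,s,s^2,rs,rs^2$ and $z'=r^5=r^2s=rs^3$, with all other products equal to $z$. Then:
\begin{enumerate}
\item $n=3$, $(i_1,i_2,i_3)=(1,2,5)$ and $(j_1,j_2,j_3)=(3,1,0)$.
\item The reduced matrix $A$ has a unique nonzero transversal. Rows $r^3,r^4,s^2,rs^2$ each have one entry, and this forces the rest, so $\det A\neq0$ for every cocycle.
\item For $\alpha=2$ one gets $i_{(2)}=1$ and $j_{(2)}=0$, and (2) claims $s^3=rs$.
\item But $s^3\cdot s^2=z$ while $rs\cdot s^2=z'$. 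Exactly as predicted above, the row $rs$ meets both columns $r$ and $s^2$.
\end{enumerate}

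What your argument actually proves, and what is evidently intended, is
\[
r^{i_1-i_{(\alpha)}}s^{j_1-j_{(\alpha)}}=r^{i_{\alpha+1}-i_{(\alpha)}}s^{j_{\alpha+1}-j_{(\alpha)}}.
\]
Here the definitions do confine the second row to $t=\alpha+1$. For $\alpha=n-1$ this is part (1) at $\alpha=0$; in the example it gives $s^3=r^4$. The paper's ``analogous, omitted'' hides the same index slip, but you should have detected it rather than asserted that the bookkeeping works.

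Part (3) has a smaller issue of the same kind. With the paper's convention $i_0=i_1$ when $i_1=0$, the printed $i_{0,\beta}$ equals $-1$. So for $\alpha=0$ you must use $i_1$ in place of $i_1-i_0-1$, as in part (1). Your remark that the index-$0$ cases ``need separate care'' leaves this unresolved.
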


\begin{proof}
The submatrix of $A_{\mathcal B}$ corresponding to the rows indexed by
\[
r^{i_n-i^{(\alpha)}}s^{j_n-j^{(\alpha)}}
\quad\text{and}\quad
r^{i_{\alpha+1}-i^{(\alpha)}}s^{j_{\alpha+1}-j^{(\alpha)}}
\]
is as follows:
\[
\begin{array}{c|cccccccc}
 &~ & \cdots &~& r^{i^{(\alpha)}}s^{j^{(\alpha)}} &~& \cdots &~ \\ \hline
r^{i_{n}-i^{(\alpha)}}s^{j_n-j^{(\alpha)}} & 0 & \cdots & 0 & r^{i_n}s^{j_n} & 0 & \cdots & 0 \\
r^{i_{\alpha+1}-i^{(\alpha)}}s^{j_{\alpha+1}-j^{(\alpha)}} & 0 & \cdots & 0 & r^{i_{\alpha+1}}s^{j_{\alpha+1}} & 0 & \cdots & 0 \\
\end{array}
\]
By Lemma~\ref{ijNonZeroEntry}, each of these rows has exactly one nonzero entry, occurring in the column indexed by $r^{i^{(\alpha)}}s^{j^{(\alpha)}}$. Since $\det A\neq0$, these rows cannot be distinct. Hence
\[
r^{i_n-i^{(\alpha)}}s^{j_n-j^{(\alpha)}}=r^{i_{\alpha+1}-i^{(\alpha)}}s^{j_{\alpha+1}-j^{(\alpha)}}
\]
in the semigroup $S$.

%

The proofs of {\rm(2)} and {\rm(3)} are analogous to that of {\rm(1)} and are therefore omitted.
\end{proof}

\begin{thm}\label{l_sAdd1L_2Add1}
Suppose that $\det A\neq0$ and $n=2$.
The matrix $A$ is $A$-diagonal or $A$-upper triangular.

Moreover, we distinguish the following cases, in each of which one additional identity holds:
\begin{enumerate}
\item[(i)] $2l_r+2-k \geq 0$ and $2l_s+2-k' \leq 0$:
\(
r^{l_r+1} = r^{2l_r+2-k}s^{k'-1-l_s}.
\)
\item[(ii)] $2l_r+2-k \leq 0$ and $2l_s+2-k' \geq 0$:
\(
s^{l_s+1} = r^{k-1-l_r}s^{2l_s+2-k'}.
\)
\item[(iii)] $2l_r+2-k \leq 0$ and $2l_s+2-k' \leq 0$:
\(
r^{k-1-l_r} = s^{k'-1-l_s}.
\)
\end{enumerate}
In each case, this additional identity together with the common identities
\begin{align*}
r^k = s^{k'} = z,\qquad
r^{l_r+1}s^{l_s+1} = z,\qquad
r^{k-1}s^{l_s} = r^{l_r}s^{k'-1} = z',
\end{align*}
completely determine the semigroup $S$.

The case
\[
2l_r+2-k > 0
\quad \text{and} \quad
2l_s+2-k' > 0
\]
cannot occur.
\end{thm}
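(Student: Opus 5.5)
With $n=2$, the chain \eqref{z'} reduces to $i_1=l_r$, $i_2=k-1$, $j_1=k'-1$, $j_2=l_s$. So
\[
z'=r^{l_r}s^{k'-1}=r^{k-1}s^{l_s},
\]
and the two gaps are $i_2-i_1=k-1-l_r$ and $j_1-j_2=k'-1-l_s$. In the notation of Corollary~\ref{n4cor} these are $l^{(1)}$ and $l^{(2)}$. The plan is to read off all the identities from the lemmas already proved, by specialising them to $n=2$.

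\textbf{The identity $r^{l_r+1}s^{l_s+1}=z$.} If both gaps exceed $1$, this is exactly Lemma~\ref{il1jl11} with $l=1$. If $l_r+1=k-1$, then $r^{l_r+1}s^{l_s+1}=z's=z$. The case $l_s+1=k'-1$ is symmetric. The identities $r^k=s^{k'}=z$ and the two expressions for $z'$ hold by definition.

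\textbf{The additional identity, via Lemma~\ref{n4-2}(1) with $\alpha=0$.} Here
\[
i^{(0)}=\min\{k-2-l_r,\,l_r\},\qquad j^{(0)}=\min\{l_s,\,k'-2-l_s\},
\]
and the lemma gives $r^{k-1-i^{(0)}}s^{l_s-j^{(0)}}=r^{l_r-i^{(0)}}s^{k'-1-j^{(0)}}$. The sign of $2l_r+2-k$ decides which term attains the first minimum, and likewise for $2l_s+2-k'$.
\begin{itemize}
\item In case (iii), $i^{(0)}=l_r$ and $j^{(0)}=l_s$, so the identity becomes $r^{k-1-l_r}=s^{k'-1-l_s}$.
\item In case (i), $i^{(0)}=k-2-l_r$ and $j^{(0)}=l_s$, giving $r^{l_r+1}=r^{2l_r+2-k}s^{k'-1-l_s}$.
\item Case (ii) is symmetric to case (i).
\end{itemize}
On boundary values where both minima coincide, the identities agree, so the cases are consistent.

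\textbf{Excluding both expressions positive.} In that case $i_1=l_r\ge k-1-l_r=l^{(1)}$ and $j_n=l_s\ge l^{(2)}$. The remark following Corollary~\ref{n4cor} says these two inequalities cannot hold simultaneously.

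\textbf{Completeness of the presentation.} I would show that the listed identities determine $S$. Every element is some $r^as^b$ with $a<k$ and $b<k'$. In case (iii), every word can be rewritten to a normal form with $a<k-1-l_r$, or with $a$ small relative to the substitution $r^{k-1-l_r}\mapsto s^{k'-1-l_s}$. In cases (i) and (ii), the identity rewrites $r^{l_r+1}$ into words of smaller $r$-degree (respectively $s^{l_s+1}$ into words of smaller $s$-degree), since $2l_r+2-k<l_r+1$. Together with $r^{l_r+1}s^{l_s+1}=z$ this yields a finite set of normal forms.

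I must then argue that no further identifications among the normal forms occur. Suppose two distinct normal forms were equal in $S$. Using Lemma~\ref{ijNonZeroEntry}, I would locate two rows of $A$ whose single nonzero entries lie in the same column, or produce a dependent triple of rows as in Lemma~\ref{n4}; either contradicts $\det A\neq0$. Conversely, the quotient of $B^+$ by these identities should already give a monoid with unique annihilator $z'$. This is the step I expect to be the main obstacle, because it needs careful bookkeeping of the normal forms in each case.

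\textbf{The shape of $A$.} Finally I read off the zero pattern of $A$ from Lemma~\ref{ijNonZeroEntry} with $n=2$: each row has one or two nonzero entries. Order the normal forms by $r$-degree, then by decreasing $s$-degree. A row with two entries has them in columns $u$ and $u\,r^{k-1-l_r}s^{-(k'-1-l_s)}$, which after the identification is a later column. With this order the submatrix becomes upper triangular. When every row has a single entry, for example when both gaps equal $1$, it is diagonal.
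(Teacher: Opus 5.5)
Your derivation of the identities follows the paper. You get $r^{l_r+1}s^{l_s+1}=z$ from Lemma~\ref{il1jl11}; your separate treatment of a gap equal to $1$, via $z's=rz'=z$, is more careful than the paper, which cites the lemma without checking its hypotheses. You get the additional identity from Lemma~\ref{n4-2}(1) with $\alpha=0$, and you exclude the doubly positive case by Lemma~\ref{n4}, through the remark after Corollary~\ref{n4cor}.

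The gap is the step you flag yourself: showing that $S$ has no identifications beyond those generated by the listed identities. The tool you propose cannot do this. If two distinct normal forms were equal in $S$, they would index one and the same row of $A$. There are then no two rows to compare, and $\det A\neq0$ gives no contradiction. Nonsingularity only ever \emph{forces} identifications (this is exactly how Lemma~\ref{n4-2} produced the extra identity); it never forbids one.

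What forbids them is the row pattern. If $u=v$ in $S$, then $uw=z'\iff vw=z'$ for every word $w$, so $u$ and $v$ have identical rows in $A_{\mathcal B}$. These rows are computable from exponents alone, since $r^as^b=z'$ in $S$ exactly when $(a,b)\in\{(l_r,k'-1),(k-1,l_s)\}$. Write $p=k-1-l_r$ and $q=k'-1-l_s$. Two distinct words can share a column only if they are $r^is^j$ and $r^{i-p}s^{j+q}$. Their rows coincide exactly when both are single-entry rows, which happens exactly when the pair is an instance of the additional identity (in case (i), exactly when $i>l_r$). Otherwise one of them has a second entry, and they are distinct in $S$. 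This is the paper's argument about two rows meeting in a column $r^{i_3}s^{j_3}$. Your ``converse'' claim about the presented monoid is not needed.

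The triangularity step is likewise only asserted (``after the identification is a later column''). In case (i), take a row $r^is^j$ of $A$ with two nonzero entries. You need that the column $r^{l_r-i}s^{k'-1-j}$ meets no other row, and that the column $r^{k-1-i}s^{l_s-j}$ meets, besides $r^is^j$, only the single-entry row $r^{i-p}s^{j+q}$. Then $A$ splits into $1\times1$ blocks and $2\times2$ blocks with one zero, as in the paper; case (ii) is symmetric.

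When $2l_r+2-k\le0$ and $2l_s+2-k'\le0$, the two columns of each two-entry row of $A_{\mathcal B}$ coincide in $S$, since $r^{k-1-i}s^{l_s-j}=r^{l_r-i}r^{p}s^{l_s-j}=r^{l_r-i}s^{q}s^{l_s-j}=r^{l_r-i}s^{k'-1-j}$. So in that case $A$ is $A$-diagonal whatever the gaps are, and your ``later column'' picture does not apply. Your example with both gaps equal to $1$ is also not the right description of the diagonal case. For instance, with $k=2$ and $k'\ge3$ the row $s$ has two entries, namely $s\cdot s^{k'-2}=s\cdot rs^{k'-3}=z'$.
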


\begin{proof}
By Lemma~\ref{k'-1k-1}, if $l_s=k'-1$ and $l_r=k-1$, then $M$ is $A$-diagonal. Hence, we may assume that $l_s<k'-1$ and $l_r<k-1$. Consequently,
\(k-2-l_r\geq0\) and \(k'-2-l_s\geq0\).

By Lemma~\ref{il1jl11}, we have $r^{l_r+1}s^{l_s+1} = z$.

As $n=2$, by Lemma~\ref{ijNonZeroEntry}, each row of the matrix $A$ contains at most two nonzero entries. Moreover, for a row $r^is^j$ whenever the exponent of $r$ exceeds $l_r$ or the exponent of $s$ exceeds $l_s$, the corresponding row contains at most one nonzero entry. Since $S$ is commutative, the same observations hold for the columns: each column contains at most two nonzero entries. Furthermore, the corresponding condition on the exponents of $r$ and $s$ applies analogously to the elements indexing the columns.

We apply Lemma~\ref{n4-2}(1) with $\alpha=0$ and $n=2$ (alternatively, Lemma~\ref{n4-2}(2) may be used). Then
\[i^{(0)}=\min\{\,i_2-i_1-1,\; i_1\,\}=\min\{\,k-2-l_r,\; l_r\,\},\]
and
\[j^{(0)}=\min\{\,j_2,\; j_1-j_2-1\,\}=\min\{\,l_s,\; k'-2-l_s\,\}.\]
Then, we distinguish four cases according to the signs of the integers
\[
2l_r+2-k
\quad\text{and}\quad
2l_s+2-k'.
\]
In each case, we apply Lemma~\ref{n4-2}(1) under the assumption that $\det A\neq0$ to obtain an identity in $S$. We then use this identity to derive the defining identities of $S$. Finally, we show that $A$ is equivalent to an upper triangular matrix.
\begin{enumerate}
\item[(i)] $(2l_r+2-k\geq 0)$ and $(2l_s+2-k'\leq 0)$:

In This case, we have $i^{(0)}=k-2-l_r$ and $j^{(0)}= l_s$.
Then, we obtain
\begin{equation}\label{eq1-1}
r^{l_r+1} = r^{2l_r+2-k}s^{k'-1-l_s}.
\end{equation}

Hence, $S$ satisfies the identities
\begin{equation}\label{eq1}
r^k = s^{k'} = z,
\qquad
r^{l_r+1}s^{l_s+1} = z,
\qquad
r^{k-1}s^{l_s} = r^{l_r}s^{k'-1} = z'.
\end{equation}
together with the identities above on $B$. We will show that these identities completely determine $S$, and that the corresponding matrix $A$ is equivalent to an upper triangular matrix.

Suppose that the distinct rows $r^{i_1}s^{j_1}$ and $r^{i_2}s^{j_2}$ both have nonzero entries in the column $r^{i_3}s^{j_3}$ in the matrix $A_{\mathcal B}$, without satisfying the identity~(\ref{eq1-1}).
That is, there do not exist integers $0\leq i'\leq k-2-l_r$ and $0\leq j'\leq l_s$ such that
\(i_1=i'+l_r+1\),
\(j_1=j'\),
\(i_2=i'+2l_r+2-k\),
and
\(j_2=j_1+k'-1-l_s\) or the same relations with the roles of $(i_1,j_1)$ and $(i_2,j_2)$ interchanged.
Since the two rows are distinct, their products with the column $r^{i_3}s^{j_3}$ are distinct in the matrix $A_{\mathcal B}$. Without loss of generality, assume that the product of the first row with the column is equal to $r^{k-1}s^{l_s}$, while the product of the second row with the column is equal to $r^{l_r}s^{k'-1}$.
First, suppose that neither of the two rows has a nonzero entry in any other column. If $i_3 > k-2-l_r$, then
\(i_1 < l_r+1\),
and hence the entry corresponding to this row and the column $r^{l_r-i_1}s^{k'-1-j_1}$ is nonzero, which is a contradiction. Therefore,
\(i_3 \leq k-2-l_r\). 
Taking
\(i' = k-2-l_r-i_3\),
it follows that these two rows together with the column arise from the identity~(\ref{eq1-1}).
Then at least one of the two rows has another nonzero entry. Since
\(j_2 = k'-1-j_3 = k'-1-l_s+j_1\)
and
\(l_s+1 \leq k'-1-l_s,\)
the row $r^{i_2}s^{j_2}$ cannot have any additional nonzero entries.
Now suppose that the row $r^{i_1}s^{j_1}$ has another nonzero entry corresponding to a column $r^{i_4}s^{j_4}$ distinct from $r^{i_3}s^{j_3}$. Then, the additional column must be
\(r^{l_r-i_1}s^{k'-1-j_1}\). This shows that these two rows are distinct in $S$.
\[\begin{array}{c|ccccccccc}
                      &~  & \cdots &~  & r^{i_3}s^{j_3}    &r^{i_4}s^{j_4} &~&~&~\\ \hline
r^{i_1}s^{j_1} & 0  & \cdots & 0 & r^{k-1}s^{l_s}  & r^{l_r}s^{k'-1}&0&\cdots&0 \\
r^{i_2}s^{j_2} & 0  & \cdots & 0 & r^{l_r}s^{k'-1}  &0                     &0&\cdots&0\\
r^{i_5}s^{j_5} & ~ & ~         & ~ & 0                      &0                     &~&~&~\\
\end{array}
\]

Now let $r^{i_5}s^{j_5}$ be a row distinct from both $r^{i_1}s^{j_1}$ and $r^{i_2}s^{j_2}$. Since the entries corresponding to the column $r^{i_3}s^{j_3}$ are already nonzero for the previous two rows, the entry of the row $r^{i_5}s^{j_5}$ in this column must be zero.
Suppose that the entry of the row $r^{i_5}s^{j_5}$ in the column $r^{i_4}s^{j_4}$ is nonzero. Then there are two possibilities: either this entry is equal to $r^{l_r}s^{k'-1}$, in which case we would obtain $r^{i_5}s^{j_5} = r^{i_1}s^{j_1}$ in $B^{+}$, which is impossible since the rows are distinct; or this entry is equal to $r^{k-1}s^{l_s}$.
Then we have
\(j_5 + k' - 1 - j_1 = l_s\).
Hence,
\(j_1 = k' - 1 - l_s + j_5 > l_s\),
which is a contradiction.

Therefore, after suitable permutations of its rows and columns, the matrix $A$ decomposes into blocks consisting either of single entries or of $2\times2$ blocks having exactly one zero entry on the diagonal. Consequently, $A$ can be transformed into an upper triangular matrix. Moreover, the identities~(\ref{eq1}) together with the identity~(\ref{eq1-1}) completely determine $S$.

\item[(ii)] $(2l_r+2-k> 0)$ and $(2l_s+2-k'> 0)$:

By Lemma~\ref{n4}, this case does not occur.

\item[(iii)] $(2l_r+2-k\leq 0)$ and $(2l_s+2-k'\geq 0)$:

Similarly to the first case, $S$ satisfies the identities
\[
\begin{aligned}
&s^{l_s+1} = r^{k-1-l_r}s^{2l_s+2-k'}, \qquad
r^k = s^{k'} = z, \qquad
r^{l_r+1}s^{l_s+1} = z, \\
&r^{k-1}s^{l_s} = r^{l_r}s^{k'-1} = z'.
\end{aligned}
\]
These identities are imposed on $B$ and completely determine $S$. Moreover, the corresponding matrix $A$ is equivalent to an upper triangular matrix.

\item[(iv)] $(2l_r+2-k\leq 0)$ and $(2l_s+2-k'\leq 0)$:

In This case, we have $i^{(0)}=l_r$ and $j^{(0)}= l_s$.
%
%
Since $\det A \neq 0$, we obtain
\begin{equation}\label{eq2-2}
r^{k-1-l_r} = s^{k'-1-l_s}.
\end{equation}
Hence, $S$ satisfies the identities~(\ref{eq1}), together with the identities above on $B$. We will show that these identities completely determine $S$, and that the corresponding matrix $A$ is equivalent to a diagonal matrix.

Suppose that the distinct rows $r^{i_1}s^{j_1}$ and $r^{i_2}s^{j_2}$ in the matrix $A_{\mathcal B}$ both have nonzero entries in the column $r^{i_3}s^{j_3}$.
Without loss of generality, assume that the product of the first row with the column is equal to $r^{k-1}s^{l_s}$, while the product of the second row with the column is equal to $r^{l_r}s^{k'-1}$.

As the product of $r^{i_2}s^{j_2}$ and $r^{i_3}s^{j_3}$ is $r^{l_r}s^{k'-1}$, $i_3 \leq l_r$, and therefore
\(
i_1 + i_3 \leq i_1 + l_r.
\)
Now, the product of $r^{i_1}s^{j_1}$ and $r^{i_3}s^{j_3}$ is $r^{k-1}s^{l_s}$. Thus,
\(
i_1 + i_3 = k-1,
\)
and consequently
\(
k-1-l_r \leq i_1.
\)
Since $2l_r+2-k \leq 0$, this implies
\(
l_r+1 \leq i_1.
\)
Similarly, we obtain
\(
l_s+1 \leq j_2.
\) This shows that all entries of these rows, except for the column $r^{i_3}s^{j_3}$, are zero. Hence,
\[
r^{i_1}s^{j_1} = r^{i_2}s^{j_2}.
\]
Then we have
\[
r^{k-1-l_r+i_2}s^{j_1} = r^{i_2}s^{k'-1-l_s+j_1},
\]
which shows that this equality arises from the identity~(\ref{eq2-2}).
\[\begin{array}{c|cccccccc}
                      &~  & \cdots &~  & r^{i_3}s^{j_3}     &~&~&~\\ \hline
r^{i_1}s^{j_1} & 0  & \cdots & 0 & r^{k-1}s^{l_s}  &0&\cdots&0 \\
r^{i_2}s^{j_2} & 0  & \cdots & 0 & r^{l_r}s^{k'-1}  &0&\cdots&0\\
\end{array}
\]
\end{enumerate}
\end{proof}

For example, in the case $l_r=k-2$ or $l_s=k'-2$, Theorem~\ref{l_sAdd1L_2Add1} allows us to determine when $\det A\neq0$ and to derive the identities defining the semigroup $S$.

For $n=3$, unlike the case $n=2$ treated in Theorem~\ref{l_sAdd1L_2Add1}, it may be necessary to know the actual entries of the matrix $A$, and not merely its zero--nonzero pattern, in order to determine whether $\det A\neq0$. The following example illustrates this phenomenon: the matrix $A$ is neither $A$-diagonal nor $A$-upper triangular, and its determinant depends on the values of its nonzero entries.

\begin{example}\label{Ex1}
Let $M$ be the commutative monoid with zero $z$, group of units $\{1,a\}$, generators $a,r,s$, and defining relations
\[
\begin{aligned}
&a^2=1,\qquad
r^5=s^5=z,\qquad
s^4=r^2s^2=ar^4,\qquad
r^2s=s^3,\qquad
r^3=rs^2,\\
&rs^3=r^3s=z.
\end{aligned}
\]
Put $z'=s^4$. Then
\[
M\setminus\{1,a,z\}
=
\{r,ar,s,as,r^2,ar^2,rs,ars,s^2,as^2,r^3,ar^3,r^2s,ar^2s,s^4,as^4\}.
\]

Let $1_G$ denote the trivial character of $G$, and let $\chi$ be the nontrivial character. Take
\[
1,\ r,\ s,\ r^2,\ rs,\ s^2,\ r^3,\ r^2s,\ s^4
\]
as representatives of the $G$-orbits on $M\setminus\{z\}$. Since
\[
\chi(z')=1
\quad\text{and}\quad
\chi(az')=-1,
\]
the matrices $A(1_G)$ and $A(\chi)$ are
\[
A(1_G)=
\begin{bmatrix}
0&0&0&0&0&0&0&0&1\\
0&0&0&0&0&0&1&0&0\\
0&0&0&0&0&0&0&1&0\\
0&0&0&1&0&1&0&0&0\\
0&0&0&0&1&0&0&0&0\\
0&0&0&1&0&1&0&0&0\\
0&1&0&0&0&0&0&0&0\\
0&0&1&0&0&0&0&0&0\\
1&0&0&0&0&0&0&0&0
\end{bmatrix},
\]\[
A(\chi)=
\begin{bmatrix}
0&0&0&0&0&0&0&0&1\\
0&0&0&0&0&0&-1&0&0\\
0&0&0&0&0&0&0&1&0\\
0&0&0&-1&0&1&0&0&0\\
0&0&0&0&1&0&0&0&0\\
0&0&0&1&0&1&0&0&0\\
0&-1&0&0&0&0&0&0&0\\
0&0&1&0&0&0&0&0&0\\
1&0&0&0&0&0&0&0&0
\end{bmatrix}.
\]
Then
\[\det A(1_G)=0,
\qquad
\det A(\chi)\neq0.\]
Hence the algebra
\(K_0(M_{1_G}/G,c_{1_G})\)
is not Frobenius, whereas
\(K_0(M_{\chi}/G,c_{\chi})\)
is Frobenius.
\end{example}

To determine whether a twisted contracted monoid algebra over a finite field $K$ is Frobenius, it is necessary that the trivial-character component
\[K_0(M_{1_G}/G,c_{1_G})\]
be Frobenius, where $G$ is the group of units of $M$.

When the associated matrix is \(A\)-diagonal or equivalent, up to permutations of its rows and columns, to an upper triangular matrix, its determinant is automatically nonzero, since every diagonal entry is a nonzero cocycle value. Thus the Frobenius property depends only on the multiplication structure of the monoid and is independent of the particular cocycle.
In the general case, one may first ignore the cocycle values by replacing every nonzero entry of \(A\) with \(1\). If the resulting matrix already has determinant zero, then no choice of cocycle can produce a Frobenius algebra. Hence such monoids may be discarded before any computation involving the cocycle.

From this point onward, we shall always assume that whenever two rows of $A^{(1)}_{\mathcal{B}}$ are equal, the corresponding elements of $B^+$ represent the same element of $M$; otherwise, $\det A=0$ for trivial character.
For example, suppose that there exist integers $2\le l<l'$ and $m\ge2$ such that
\[
l+m-1<l'+m-1\le n-1,
\]
and
\[
i_{l+t}-i_{l+t-1}=i_{l'+t}-i_{l'+t-1},
\qquad
j_{l+t-1}-j_{l+t}=j_{l'+t-1}-j_{l'+t},
\]
for every $1\le t\le m-1$. Then, to avoid two rows of $A$ having the same pattern of nonzero entries, the analogue of Lemma~\ref{n4-2} must hold. Since its proof is a straightforward repetition of the argument of Lemma~\ref{n4-2}, we omit the details.

Consequently,
\[
r^{\,i_l-i'}s^{\,j_{l+m-1}-j'}
=
r^{\,i_{l'}-i'}s^{\,j_{l'+m-1}-j'}
\]
in $S$, where
\[
i'
=
\min\{\,i_l-i_{l-1},\, i_{l'}-i_{l'-1}\,\}-1,
\]
and
\[
j'
=
\min\{\,j_{l+m-1}-j_{l+m},\, j_{l'+m-1}-j_{l'+m}\,\}-1.
\]

Similarly to Lemma~\ref{n4-2}, the boundary cases $l=1$ and $l'+m-1=n$ admit analogous identities. 

Theorem~\ref{l_sAdd1L_2Add1} provides a complete description of the case $n=2$. Although the same approach can be extended to $n\ge3$, the arguments require considerably more case distinctions and additional identities among the generators, and we do not pursue them here.


\section{Character-Average Weights on Non-Frobenius Semigroup Algebras}\label{SecWeight}

As in Section~\ref{Two-G-A}, let $S$ be a finite commutative nilpotent semigroup, let
\(M=S\cup\{1\}\)
be the monoid obtained by adjoining an identity element, and let
\(K_0(M,c)\)
be a twisted contracted monoid algebra over an admissible finite field $K$. 
Throughout this section, we assume that \(M\) has a unique annihilating element \(z'\), that the associated matrix \(A\) has no zero rows or columns.


Following Proposition~\ref{GenCharMc}, define the linear map
\[
\lambda:K_0(M,c)\longrightarrow K
\]
by
\[
\lambda(z')=1,
\qquad
\lambda(x)=0
\quad\text{for all }x\neq z',
\]
and let
\(\chi=\psi\circ\lambda,\)
where $\psi:K\to\mathbb{C}^{\times}$ is a nontrivial additive character satisfying $\psi(1)\neq1$.

Consider the character-average function
\[
\omega(x)
=
1-
\frac{1}{|K_0(M,c)^\times|}
\sum_{u\in K_0(M,c)^\times}
\chi(xu).
\]
Our aim is to determine the nonzero principal ideals of \(K_0(M,c)\) on which the character-average weight \(\omega\) satisfies the defining conditions of a homogeneous weight, even when \(K_0(M,c)\) is not a Frobenius algebra.

Although \(M\) has a unique annihilating element by assumption, the associated matrix \(A\) need not be nonsingular. Consequently, the results of Section~\ref{SemAlgebraComSem} do not apply directly, and a separate analysis of the function \(\omega\) is required.

We compute the character-average function of an arbitrary element
\[x=\alpha_11+\sum_{s\in S}\alpha_ss,\]
where \(\alpha_1,\alpha_s\in K\) for all \(s\in S\).

First, we determine the group of units of \(K_0(M,c)\).

Since \(S\) is nilpotent, the contracted twisted semigroup algebra \(K_0(S,c)\) is a nilpotent ideal of \(K_0(M,c)\), and
\[K_0(M,c)=K1\oplus K_0(S,c).\]
Hence every element of \(K_0(M,c)\) can be written uniquely as
\[a+x,\qquad a\in K,\quad x\in K_0(S,c).\]
Moreover, \(a+x\) is invertible if and only if \(a\neq0\). Therefore,
\[
K_0(M,c)^{\times}
=
\left\{
\sum_{s\in M\setminus\{z\}}\alpha_s s
\;\middle|\;
\alpha_1\neq0
\right\}.
\]
Consequently,
\(|K_0(M,c)^{\times}|=(|K|-1)|K|^{|S|}.\)

Let \(x=\alpha_11+\sum_{s\in S}\alpha_ss.\)

Let \(u=\beta_11+\sum_{t\in S}\beta_tt
\in K_0(M,c)^{\times}, \)
where \(\beta_1\neq0\). Then
\[
xu
=
\sum_{a,b\in M\setminus\{z\}}
\alpha_a\beta_b\,c(a,b)\,ab.
\]
Since \(\lambda\) vanishes on all basis elements except \(z'\),
\[
\lambda(xu)
=
\sum_{\substack{a,b\in M\setminus\{z\}\\ab=z'}}
\alpha_a\beta_b\,c(a,b).
\]
Hence
\[
\omega(x)
=
1-
\frac{1}{|K_0(M,c)^{\times}|}
\sum_{u\in K_0(M,c)^{\times}}
\psi\!\left(
\sum_{\substack{a,b\in M\setminus\{z\}\\ab=z'}}
\alpha_a\beta_b\,c(a,b)
\right).
\]

Writing
\[
P(\beta)
=
\alpha_1\beta_{z'}
+
\sum_{\substack{a,b\in S\\ab=z'}}
\alpha_a\beta_b\,c(a,b),
\]
we obtain
\(\lambda(xu)=\alpha_{z'}\beta_1+P(\beta).\)

Suppose first that \(P(\beta)\) is identically zero. Then
\(\lambda(xu)=\alpha_{z'}\beta_1,\)
and therefore
\[
\sum_{u\in K_0(M,c)^{\times}}\chi(xu)
=
|K|^{|S|}
\sum_{a\in K^\times}\psi(\alpha_{z'}a).
\]

If \(\alpha_{z'}=0\), as in the case \(x=0\), then
\[
\sum_{u\in K_0(M,c)^{\times}}\chi(xu)
=
(|K|-1)|K|^{|S|},
\]
and hence
\(\omega(x)=0.\)

Assume now that \(\alpha_{z'}\neq0\), for example when \(x=kz'\) with \(k\in K^\times\). Since multiplication by the nonzero scalar \(\alpha_{z'}\) is a permutation of \(K^\times\),
\[
\sum_{a\in K^\times}\psi(\alpha_{z'}a)
=
\sum_{a\in K^\times}\psi(a)
=
-1.
\]
Consequently,
\[
\sum_{u\in K_0(M,c)^{\times}}\chi(xu)
=
-|K|^{|S|},
\]
and therefore
\[
\omega(x)
=
1+\frac{1}{|K|-1}=\frac{|K|}{|K|-1}.
\]

Now suppose that \(P(\beta)\) is not identically zero. After collecting like terms, at least one variable \(\beta_t\), where \(t\in S\), has a nonzero coefficient. Since \(\beta_t\) ranges over all elements of \(K\),
\(\sum_{\beta_t\in K}\psi(c\beta_t)=0,
\ (c\neq0).\)
Hence
\[
\sum_{u\in K_0(M,c)^{\times}}\chi(xu)=0,
\]
and consequently
\(\omega(x)=1.\)
%
%
%

The values of the character-average function obtained above are summarized in the following proposition.

\begin{prop}\label{PropCharacterAverage}
For
\(x=\alpha_11+\sum_{s\in S}\alpha_ss,\)
define
\[
P(\beta)
=
\alpha_1\beta_{z'}
+
\sum_{\substack{a,b\in S\\ab=z'}}
\alpha_a\beta_b\,c(a,b).
\]
Then the character-average function
\[
\omega(x)
=
1-
\frac{1}{|K_0(M,c)^{\times}|}
\sum_{u\in K_0(M,c)^{\times}}
\chi(xu)
\]
satisfies
\[
\omega(x)=
\begin{cases}
0,
&
\text{if }P(\beta)\equiv0\text{ and }\alpha_{z'}=0,\\[1ex]
\displaystyle
\frac{|K|}{|K|-1},
&
\text{if }P(\beta)\equiv0\text{ and }\alpha_{z'}\neq0,\\[2ex]
1,
&
\text{if }P(\beta)\not\equiv0.
\end{cases}
\]
\end{prop}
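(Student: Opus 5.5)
The plan is to evaluate the character sum $\sum_{u}\chi(xu)$ directly, by parametrizing the unit group and splitting the sum into independent sums over the coordinates of $u$.

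\textbf{Parametrizing the units.} Since $K_0(S,c)$ is a nilpotent ideal and $K_0(M,c)=K1\oplus K_0(S,c)$, a unit is exactly an element $u=\beta_1 1+\sum_{t\in S}\beta_t t$ with $\beta_1\in K^\times$ and $(\beta_t)_{t\in S}\in K^{S}$ arbitrary. This gives $|K_0(M,c)^\times|=(|K|-1)|K|^{|S|}$.

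\textbf{Linearizing $\lambda(xu)$.} Expand $xu$ using the cocycle and apply $\lambda$, which reads off the $z'$-coefficient. The pairs $(a,b)$ with $ab=z'$ fall into three types.
\begin{itemize}
\item $a=z'$, $b=1$: contributes $\alpha_{z'}\beta_1$.
\item $a=1$, $b=z'$: contributes $\alpha_1\beta_{z'}$.
\item $a,b\in S$: contributes $\sum_{ab=z'}\alpha_a\beta_b c(a,b)$.
\end{itemize}
Pairs with $a=z'$ and $b\in S$ do not occur, because $z'$ is annihilating, and symmetrically for $b=z'$ with $a\in S$. Here $c(1,\cdot)=c(\cdot,1)=1$ under the paper's normalization. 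Hence
\[
\lambda(xu)=\alpha_{z'}\beta_1+P(\beta),
\]
where $P$ is a linear form in the variables $(\beta_t)_{t\in S}$ alone. Since $\psi$ is additive,
\[
\chi(xu)=\psi(\alpha_{z'}\beta_1)\,\psi(P(\beta)),
\]
and the sum over units factors as
\[
\Bigl(\sum_{\beta_1\in K^\times}\psi(\alpha_{z'}\beta_1)\Bigr)\Bigl(\sum_{\beta\in K^S}\psi(P(\beta))\Bigr).
\]

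\textbf{Evaluating the two factors.} Suppose $P\not\equiv 0$. Then some $\beta_t$ has a nonzero coefficient $c_t$, and summing over $\beta_t\in K$ gives $\sum_{y\in K}\psi(c_t y)=0$ by orthogonality, since $y\mapsto\psi(c_ty)$ is a nontrivial character. So the whole sum vanishes and $\omega(x)=1$.

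Suppose $P\equiv 0$. The second factor is $|K|^{|S|}$. The first factor is $|K|-1$ if $\alpha_{z'}=0$. If $\alpha_{z'}\neq 0$, then $\beta_1\mapsto\alpha_{z'}\beta_1$ permutes $K^\times$, so the first factor is
\[
\sum_{y\in K}\psi(y)-\psi(0)=-1.
\]
Dividing by $(|K|-1)|K|^{|S|}$ gives $\omega(x)=0$ and $\omega(x)=1+\tfrac{1}{|K|-1}=\tfrac{|K|}{|K|-1}$ respectively.

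\textbf{Main obstacle.} The only delicate step is the linearization. One has to check that the coefficient of $\beta_1$ is exactly $\alpha_{z'}$ and that $P$ involves no $\beta_1$; this rests on $z'$ being annihilating and on the unit normalization of the cocycle. One also has to note that ``$P\equiv 0$'' means every collected coefficient vanishes in $K$, which is exactly what the orthogonality argument needs. Everything after that is the standard orthogonality of additive characters of $K$.
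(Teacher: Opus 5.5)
Your proof is correct and takes essentially the same route as the paper. Both parametrize $K_0(M,c)^\times$ by $\beta_1\in K^\times$ and arbitrary $\beta_t$, write $\lambda(xu)=\alpha_{z'}\beta_1+P(\beta)$, and evaluate the sum by orthogonality of additive characters; your explicit factorization into a $\beta_1$-sum times a $\beta$-sum is just a tidier presentation of the paper's case analysis. The normalization $c(1,\cdot)=c(\cdot,1)=1$ that you flag is used only tacitly in the paper, and dropping it would merely rescale the coefficients of $\beta_1$ and $\beta_{z'}$ by nonzero constants, leaving the three cases unchanged.
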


We now verify that the character-average function \(\omega\) satisfies the first and second conditions of a homogeneous weight given in Definition~\ref{homogeneous-weight}.

\begin{prop}\label{PropFirstTwoConditions}
The character-average function \(\omega\) satisfies conditions {\rm(i)} and {\rm(ii)} of Definition~\ref{homogeneous-weight}.
\end{prop}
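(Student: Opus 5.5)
For condition (i) I will use Proposition~\ref{PropCharacterAverage} directly. For $x=0$ all coefficients $\alpha_a$ vanish, so $P(\beta)\equiv0$ and $\alpha_{z'}=0$. That proposition then gives $\omega(0)=0$. The same value also follows at once from the definition: $\chi(0\cdot u)=\psi(0)=1$ for every unit $u$, so the average is $1$.

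For condition (ii) I will not work with the formula in terms of $P(\beta)$. Instead I will use the invariance of the defining sum under the unit group. Since $K_0(M,c)$ is commutative (the cocycle is symmetric on a commutative monoid in our setting), we have $Rx=xR$. The key observation is that $Rx=Ry$ forces $y=vx$ for some unit $v$. Granting this, the map $u\mapsto vu$ is a bijection of $K_0(M,c)^\times$, so
\[
\sum_{u\in K_0(M,c)^\times}\chi(yu)=\sum_{u\in K_0(M,c)^\times}\chi(x\,vu)=\sum_{u\in K_0(M,c)^\times}\chi(xu),
\]
and hence $\omega(y)=\omega(x)$.

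It remains to justify that generators of the same principal ideal are unit multiples of each other. From $Rx=Ry$ we get $y=ax$ and $x=by$ for some $a,b\in R$, so $x=bax$. I will use that $R=K_0(M,c)$ is local, with maximal ideal $N=K_0(S,c)$ nilpotent, as recorded in Section~\ref{SemAlgebraComSem}. If $a$ is a unit we are done. Otherwise $a\in N$, so $ba\in N$ is nilpotent. Iterating $x=(ba)x$ gives $x=(ba)^mx=0$ for large $m$, and then $y=0=x$, which is trivially fine.

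The main obstacle is exactly this associates step: a finite local (hence artinian) ring must have principal ideals whose generators are unit multiples. The nilpotency argument above settles it cleanly. Commutativity is only used to avoid distinguishing $Rx$ from $xR$. If one prefers not to rely on it, the same argument run with left multiplication still works, because the character sum is taken over $xu$ with $u$ ranging over the whole unit group.
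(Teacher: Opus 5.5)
Your proof is correct and follows the paper's route. Condition (i) comes from Proposition~\ref{PropCharacterAverage}. For (ii), you use that $K_0(M,c)$ is local to show that generators of the same principal ideal differ by a unit, then reindex the sum over the unit group. Your nilpotency argument for $ba\in K_0(S,c)$ replaces the paper's observation that $1-ab$ is a unit, which is the same point.

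However, your closing remark is wrong and should be dropped. Commutativity is not used merely to identify $Rx$ with $xR$. It is used essentially when you rewrite $\chi(vxu)$ as $\chi(x\,vu)$. For a noncommutative twisted algebra, $Rx=Ry$ only gives $y=vx$ with $v$ on the left, and a sum over right multiples $xu$ does not absorb a left factor.

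In fact condition (ii) can fail. Take $M=\{1,p,r,s,z',z\}$ with $pr=ps=z'$ and all other products of non-identity elements equal to $z$. Let $c(s,p)=-1$, all other cocycle values $1$, and $\operatorname{char}K\neq2$. Then $(r+s)p=c(r,p)z'+c(s,p)z'=0$, so $\lambda\bigl((r+s)u\bigr)=0$ for every $u$ and $\omega(r+s)=0$. On the other hand, $(1+p)(r+s)=r+s+2z'$ has weight $|K|/(|K|-1)$, although $1+p$ is a unit.

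The paper's own substitution $w=uv$ silently uses the same commutativity. It is harmless there, since the cocycles $c_\chi$ arising in Section~\ref{SemAlgebraComSem} are symmetric.
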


\begin{proof}
By Proposition~\ref{PropCharacterAverage},
\(\omega(0)=0.\)
Hence condition {\rm(i)} is satisfied.

Now suppose that \(Rx=Ry\) for some elements \(x,y\in R\), where
\(R=K_0(M,c)\).
Then there exist \(a,b\in R\) such that
\[x=ay\quad\text{and}\quad y=bx.\]
Consequently,
\(x=abx\), and therefore
\((1-ab)x=0\).

Suppose that \(ab\) is not a unit. Since \(R\) is a local ring with maximal ideal
\(K_0(S,c)\), it follows that \(ab\in K_0(S,c)\). Hence \(1-ab\) is a unit. Multiplying the above equality by \((1-ab)^{-1}\), we obtain \(x=0\), and, thus \(y=0\). 
If \(x,y\neq 0\), then \(ab\) is a unit.

If \(a\) is not a unit, then \(a\in K_0(S,c)\). Since \(K_0(S,c)\) is an ideal of \(R\), then \(ab\in K_0(S,c)\), contradicting the fact that \(ab\) is a unit. Hence \(a\) is a unit. Therefore,
\(x=uy\)
for some unit \(u\in R^\times\).

It follows that
\[\omega(x)=\omega(uy)=1-\frac{1}{|R^\times|}\sum_{v\in R^\times}\chi(uyv).\]
Since multiplication by \(u\) is a bijection of \(R^\times\), the substitution
\(w=uv\)
yields
\[\sum_{v\in R^\times}\chi(uyv)=\sum_{w\in R^\times}\chi(yw).\]
%
%
%
%
Hence,
\[\omega(x)=1-\frac{1}{|R^\times|}\sum_{w\in R^\times}\chi(yw)=\omega(y).\]
Therefore condition {\rm(ii)} is also satisfied.
\end{proof}

Before proceeding further, we investigate condition \rm(iii) without assuming that every nonzero principal ideal contains \(Kz'\). The next two propositions show that the behavior of the character-average weight depends on the principal ideal under consideration. In particular, condition \rm(iii) holds for principal ideals containing \(Kz'\), while it fails for principal ideals contained in \(\ker(\lambda)\).

\begin{prop}\label{SingularMatrixNotHomogeneous}
Suppose that the associated matrix \(A\) is singular. Then there exists a
nonzero element \(x\in R\) such that
\(z'\notin Rx\)
and
\[
\omega(y)=0
\qquad
\text{for every }y\in Rx
\]
where \(R=K_0(M,c)\).
Consequently, \(\omega\) does not satisfy condition {\rm(iii)} in the
definition of a homogeneous weight.
\end{prop}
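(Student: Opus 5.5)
The plan is to extract a kernel vector of the singular matrix and show that the principal ideal it generates stays inside $\ker\lambda$.

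Since $A$ (the submatrix indexed by $M\setminus\{1,z,z'\}$, with the convention above) is singular, choose a nonzero vector $(\alpha_a)_{a}$, indexed by $a\in S\setminus\{z'\}$, lying in the left kernel of $A$. This means
\[
\sum_{a:\ ab=z'} \alpha_a c(a,b)=0 \qquad\text{for every } b\in S\setminus\{z'\}.
\]
Put $x=\sum_{a\in S\setminus\{z'\}}\alpha_a a$. Then $x\neq 0$, and $x$ has $\alpha_1=0$ and $\alpha_{z'}=0$.

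Next compute $\lambda(xu)$ for an arbitrary $u=\beta_1 1+\sum_{t\in S}\beta_t t\in R$. The $1$-coefficient of $u$ contributes $\beta_1\alpha_{z'}=0$. Any product $at$ with $t=z'$ and $a\in S$ is $z$, so it contributes nothing. Every remaining term $\beta_b\sum_{ab=z'}\alpha_a c(a,b)$ vanishes by the kernel condition. Hence $\lambda(xu)=0$ for all $u\in R$, so $Rx\subseteq\ker\lambda$, and in particular $z'\notin Rx$.

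For each $y=vx\in Rx$, every $u\in R^\times$ gives $yu=(vx)u=x(vu)$ by commutativity of $M$. Some care is needed here, because $R$ is twisted: one should confirm that the twisted algebra is commutative, or else argue with right ideals $xR$. The cleanest route is to note that $\lambda(x w)=0$ for all $w\in R$, which already yields $\lambda(yu)=\lambda(x(vu))=0$ in the commutative case. This identification is the step most likely to need attention; if commutativity of $K_0(M,c)$ is not automatic, I would instead take the kernel vector on the appropriate side so that the conclusion holds for $\lambda(R x R)$.

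Thus $\chi(yu)=\psi(0)=1$ for all units $u$, and $\omega(y)=1-1=0$. This also matches the case $P\equiv 0$, $\alpha_{z'}=0$ of Proposition~\ref{PropCharacterAverage}.

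Finally, condition (iii) fails for this ideal: $Rx\neq 0$ and $\sum_{y\in Rx}\omega(y)=0$, whereas any average value $\gamma>0$ would require $\gamma|Rx|>0$. The value $\gamma>0$ is forced because $\omega(z')=|K|/(|K|-1)>0$ on the ideal $Kz'$, and on $Kz'$ one computes $\sum_{y\in Kz'}\omega(y)=|K|$, which gives $\gamma=1$.
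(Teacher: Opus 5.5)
Your argument is correct in the commutative setting the paper works in, but it reaches $Rx\subseteq\ker\lambda$ by a different route. The paper argues abstractly:
\begin{itemize}
\item $\det A=0$ means $R$ is not Frobenius, via Steinberg's formula $\widetilde{\theta}_{M,c}=\det(A)\,x_{z'}^{|S|}$ and Theorem~\ref{DetAB};
\item so $\lambda$ in particular is not a Frobenius form, and $\ker\lambda$ contains a nonzero left ideal;
\item any nonzero $x$ in that ideal will do.
\end{itemize}
You instead read $x$ off a left null vector of $A$ and check $\lambda(xu)=0$ directly, by splitting $u$ into its $1$-, $z'$- and remaining components. (The index set should be $S\setminus\{z,z'\}$, not $S\setminus\{z'\}$.)

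Your version is elementary and self-contained. It bypasses the determinant formula and the Frobenius criterion entirely, and it exhibits an explicit $x$ in the radical with no $z'$-component. The paper's version is shorter once that machinery is available. Your last step is also slightly sharper than the paper's: using $Rz'=Kz'$ to pin $\gamma=1$ shows that (iii) fails for every average value, not just the normalized one.

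You were right to flag commutativity. Both your proof and the paper's need it; the paper writes $\lambda(bxu)=\lambda(x(bu))$. It holds in the intended setting because the cocycles $c_\chi(Gm_i,Gm_j)=\chi(m_im_j)$ are symmetric, so there your argument is complete.

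Your fallback for a non-symmetric cocycle cannot work, however. Since $R^\times$ spans $R$ and $\psi$ is nontrivial, $\omega\equiv0$ on $Rx$ is equivalent to $\lambda(RxR)=0$. This condition also kills the $1$- and $z'$-coefficients of $x$, so it forces the (nonzero) coefficient vector of $x$ into both the left and the right null space of $A$. These two null spaces can meet trivially.

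For example, take $S=\{r,s,z',z\}$ over $\mathbb{F}_5$ with $r^2=rs=s^2=z'$, $c(r,r)=c(r,s)=1$ and $c(s,r)=c(s,s)=2$. Then $A=\begin{pmatrix}1&1\\2&2\end{pmatrix}$ is singular. Its left null space is spanned by $(2,-1)$ and its right null space by $(1,-1)$, so no nonzero $x$ has $\omega\equiv0$ on $Rx$. Thus the statement genuinely requires $c$ to be symmetric. Adding that hypothesis, not switching to the other side, is the right repair.
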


\begin{proof}
Since \(A\) is singular, \(R\) is not a Frobenius algebra. 
Hence, \(\ker(\lambda)\) contains a nonzero left ideal.
Therefore, there exists a nonzero element \(x\in R\) such that
\(Rx\subseteq\ker(\lambda).\)

We first show that
\(z'\notin Rx.\)
Indeed, if \(z'\in Rx\), then there exists \(a\in R\) such that
\(ax=z'.\)
It follows that
\[
0=\lambda(ax)=\lambda(z')=1,
\]
which is a contradiction.

Now let \(y\in Rx\). Then \(y=bx\) for some \(b\in R\). For every
\(u\in R^\times\), we have
\[\lambda(yu)=\lambda(bxu)=\lambda\bigl(x(bu)\bigr)=0.\]
Therefore,
\(\chi(yu)=1\) for every \(u\in R^\times,\)
and hence
\[
\omega(y)
=
1-\frac{1}{|R^\times|}
\sum_{u\in R^\times}\chi(yu)
=
0.
\]
Thus,
\[
\sum_{y\in Rx}\omega(y)=0.
\]

Since \(x\neq0\), the principal ideal \(Rx\) is nonzero. Therefore,
\[
\sum_{y\in Rx}\omega(y)\neq |Rx|,
\]
and condition {\rm(iii)} does not hold.
\end{proof}

\begin{prop}\label{PrincipalIdealAverage}
Assume that
\[
P(\beta)\equiv0
\quad\Longleftrightarrow\quad
x\in Kz'.
\]
Let \(x\in R\setminus\{0\}\) where \(R=K_0(M,c)\).

\begin{enumerate}
\item[\rm(i)]
If \(z'\in Rx\), then
\[
\sum_{y\in Rx}\omega(y)=|Rx|.
\]

\item[\rm(ii)]
If
\(Rx\subseteq\ker(\lambda),\)
then
\[
\sum_{y\in Rx}\omega(y)=0.
\]
Consequently, condition {\rm(iii)} fails on \(Rx\).
\end{enumerate}
\end{prop}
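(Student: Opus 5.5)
The plan is to use the trichotomy of Proposition~\ref{PropCharacterAverage} together with the hypothesis $P(\beta)\equiv0 \Leftrightarrow x\in Kz'$. Under this hypothesis, $\omega(y)=0$ for $y=0$, $\omega(y)=|K|/(|K|-1)$ for $y\in K^\times z'$, and $\omega(y)=1$ for every $y\notin Kz'$. So in every case the sum over a principal ideal $I=Rx$ reduces to counting how many elements of $I$ lie in $Kz'$.

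For part (i), suppose $z'\in Rx$. Then $Kz'\subseteq Rx$, since $Rx$ is a $K$-subspace ($K\cdot 1\subseteq R$). The elements of $Rx$ split as follows: $0$ contributes $0$; the $|K|-1$ elements of $K^\times z'$ contribute $(|K|-1)\cdot |K|/(|K|-1)=|K|$ in total; and each of the remaining $|Rx|-|K|$ elements contributes $1$. The total is $|Rx|-|K|+|K|=|Rx|$, as claimed.

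For part (ii), suppose $Rx\subseteq\ker(\lambda)$. I will argue exactly as in the proof of Proposition~\ref{SingularMatrixNotHomogeneous}. For any $y=bx\in Rx$ and $u\in R^\times$, we have $yu=b(xu)=(bxu)$. By commutativity of $R$ (as $M$ is commutative; if the cocycle is not symmetric, I will instead use that $Rx$ is a two-sided ideal because $R$ is commutative as an algebra up to this point), $yu=(bu)x\in Rx\subseteq\ker\lambda$. Hence $\chi(yu)=1$ for every $u\in R^\times$, so $\omega(y)=0$ and the sum vanishes. Since $x\neq0$, $|Rx|\ge1$, so $0\neq\gamma|Rx|$ for $\gamma=1$, and condition (iii) fails on $Rx$.

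The only delicate point is the step $yu\in Rx$ in (ii), which needs $Rx$ to be closed under right multiplication by units. I expect this to be harmless: the paper's own Proposition~\ref{SingularMatrixNotHomogeneous} already uses $\lambda(bxu)=\lambda(x(bu))$, i.e.\ commutativity of $R$. Note also that in (ii) the hypothesis on $P$ is not even needed; the conclusion follows directly from $\lambda$ vanishing on $Rx R^\times$.
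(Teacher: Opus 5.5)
Your proof is correct and follows the paper's argument. In (i) you use the same split of $Rx$ into $0$, the $|K|-1$ elements of $K^\times z'$ (each of weight $|K|/(|K|-1)$), and the remaining elements (each of weight $1$); your single count also covers the paper's separate case $Rx=Kz'$. In (ii) you reproduce the argument inside Proposition~\ref{SingularMatrixNotHomogeneous}, which the paper simply cites.

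Like the paper, you rely on $R$ being commutative in (ii). State that assumption plainly and drop the parenthetical about a non-symmetric cocycle: that is exactly the case in which $R$ may fail to be commutative, so the two-sided-ideal justification there is circular.
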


\begin{proof}
Suppose first that \(z'\in Rx\). Since \(Rx\) is a \(K\)-subspace of \(R\),
we have
\(Kz'\subseteq Rx.\)
If \(Rx=Kz'\), then every nonzero element of \(Rx\) has weight
\[
\frac{|K|}{|K|-1},
\]
and hence
\[
\sum_{y\in Rx}\omega(y)
=
(|K|-1)\frac{|K|}{|K|-1}
=
|K|
=
|Rx|.
\]

Suppose that \(Rx\neq Kz'\). By
Proposition~\ref{PropCharacterAverage}, every nonzero element of \(Kz'\)
has weight
\[
\frac{|K|}{|K|-1},
\]
whereas every element of \(Rx\setminus Kz'\) has weight \(1\).
Therefore,
\[
\begin{aligned}
\sum_{y\in Rx}\omega(y)
&=
(|K|-1)\frac{|K|}{|K|-1}
+
\bigl(|Rx|-|K|\bigr)\\
&=
|Rx|.
\end{aligned}
\]

Now, suppose that
\(Rx\subseteq\ker(\lambda).\)
By Proposition~\ref{SingularMatrixNotHomogeneous},
\[
\sum_{y\in Rx}\omega(y)=0.
\]
Since \(Rx\neq\{0\}\), we have
\[
\sum_{y\in Rx}\omega(y)\neq |Rx|.
\]
Hence condition {\rm(iii)} does not hold for the principal ideal \(Rx\).
\end{proof}

By Proposition~\ref{SingularMatrixNotHomogeneous}, applied to the algebra
\(K_0(M_{1_G}/G,c_{1_G})\) in Example~\ref{Ex1}, we have
\[
\sum_{y\in R(r^2-s^2)}\omega(y)=0.
\]

The preceding results show that, in general, the character-average weight need not satisfy condition {\rm(iii)} on every principal ideal. Nevertheless, it does satisfy this condition on a distinguished family of principal ideals, namely those containing the minimal ideal \(Kz'\). This motivates the following relative version of the notion of a homogeneous weight, applicable even when the algebra is not Frobenius.

\begin{ddef}\label{RelativeHomogeneousWeight}
Let \(R\) be a finite \(K\)-algebra, and let \(\mathcal{I}\) be a family of
nonzero principal left ideals of \(R\). A function
\[
\omega:R\longrightarrow\mathbb{R}
\]
is said to be homogeneous on \(\mathcal{I}\) if it satisfies conditions
{\rm(i)} and {\rm(ii)} of Definition~\ref{homogeneous-weight} and, for every
\(I\in\mathcal{I}\),
\[
\sum_{y\in I}\omega(y)=|I|.
\]
\end{ddef}

We now summarize the results established in this section. Under the assumption
\[
P(\beta)\equiv0
\quad\Longleftrightarrow\quad
x\in Kz',
\]
the character-average construction defines a homogeneous weight on the family of principal ideals containing \(Kz'\). In the Frobenius case, every nonzero principal ideal contains \(Kz'\), so this notion coincides with the classical definition of a homogeneous weight.

\begin{thm}\label{HomogeneousOnAdmissibleIdeals}
Let \(R=K_0(M,c)\), where \(M\) has a unique annihilating element \(z'\), and let
\[
\lambda:R\longrightarrow K
\]
be the \(K\)-linear map defined by
\[
\lambda(z')=1,
\qquad
\lambda(t)=0
\quad\text{for every basis element }t\neq z'.
\]
Let
\(\chi=\psi\circ\lambda\)
and define
\[
\omega(x)
=
1-\frac{1}{|R^\times|}
\sum_{u\in R^\times}\!\chi(xu).
\]
Assume that
\[
P(\beta)\equiv0
\quad\Longleftrightarrow\quad
x\in Kz'.
\]
Then, for every nonzero element \(x\in R\),
\[
\sum_{y\in Rx}\!\omega(y)=|Rx|
\quad\Longleftrightarrow\quad
Kz'\subseteq Rx.
\]
Consequently, the family
\[
\mathcal{I}_{z'}
=
\left\{
Rx\;\middle|\;
x\in R\setminus\{0\}
\text{ and }
Kz'\subseteq Rx
\right\}
\]
is precisely the family of nonzero principal ideals on which \(\omega\)
satisfies condition {\rm(iii)} of
Definition~\ref{homogeneous-weight}. In particular, \(\omega\) is homogeneous
on \(\mathcal{I}_{z'}\).
\end{thm}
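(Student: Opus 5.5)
The plan is to prove the two implications separately. The implication ``$\Leftarrow$'' is exactly Proposition~\ref{PrincipalIdealAverage}(i): if $Kz'\subseteq Rx$ then $z'\in Rx$, so $\sum_{y\in Rx}\omega(y)=|Rx|$. For ``$\Rightarrow$'' I argue by contraposition. Assume $x\neq0$ and $Kz'\not\subseteq Rx$. Since $Rx$ is a $K$-subspace, this means $z'\notin Rx$. I will show that then $Rx\subseteq\ker(\lambda)$; Proposition~\ref{PrincipalIdealAverage}(ii), or the argument in Proposition~\ref{SingularMatrixNotHomogeneous}, will then give $\sum_{y\in Rx}\omega(y)=0\neq|Rx|$. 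The final statements about $\mathcal I_{z'}$ follow at once, together with Proposition~\ref{PropFirstTwoConditions} for conditions (i) and (ii).

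The key step is the following claim: a nonzero ideal $I=Rx$ of the commutative local ring $R$ with $Kz'\cap I=0$ lies in $\ker\lambda$. Suppose not, and pick $y\in I$ with $\alpha_{z'}(y)=\lambda(y)\neq0$. Then $y\notin Kz'$, since otherwise $z'\in I$. Also $y$ is not a unit, because $z'\notin I$ forces $I\neq R$; hence $\alpha_1(y)=0$. By the standing hypothesis ($P(\beta)\equiv0$ iff the element lies in $Kz'$), the linear form $P(\beta)$ attached to $y$ is not identically zero. Since $\alpha_1=0$, its nonzero coefficient belongs to some $\beta_b$ with $b\in S$. That coefficient is
\[
\sum_{\substack{a\in S\\ ab=z'}}\alpha_a(y)\,c(a,b)=\lambda(yb),
\]
so $y_1:=yb\in I$ satisfies $\lambda(y_1)\neq0$.

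Next I iterate. The element $y_1$ lies in the span of $S^{m+1}\setminus\{z\}$ whenever $y$ lies in the span of $S^m\setminus\{z\}$. Repeating the step therefore produces elements $y_j\in I$ with $\lambda(y_j)\neq0$ lying in the spans of successively higher powers $S^{m+j}$. At each stage either $y_j\in Kz'$, which contradicts $Kz'\cap I=0$ because $\lambda(y_j)\neq0$, or the step can be repeated. Since $S$ is nilpotent, $S^k=\{z\}$, so the process must stop after finitely many steps, which forces the contradiction. Hence $I\subseteq\ker\lambda$.

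I expect the main obstacle to be making the descent rigorous. It requires checking that $P$ for the element $y_j$ (not just for $x$) is governed by the hypothesis, which is stated for arbitrary elements. It also requires checking that multiplication by $b\in S$ genuinely pushes the support into a higher power of $S$, so that the induction on $\max\{m : y\in\operatorname{span}(S^m\setminus\{z\})\}$ terminates. Once $Rx\subseteq\ker\lambda$ is established, every $y\in Rx$ and every $u\in R^\times$ satisfy $\lambda(yu)=0$, so $\omega\equiv0$ on $Rx$ and the sum is $0\neq|Rx|$.
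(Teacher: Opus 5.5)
Your proof is correct, and it follows the paper's outline. The paper gives no separate proof of this theorem; it presents it as a summary of Propositions~\ref{PropFirstTwoConditions} and~\ref{PrincipalIdealAverage}. However, Proposition~\ref{PrincipalIdealAverage} only treats the two cases $z'\in Rx$ and $Rx\subseteq\ker(\lambda)$. It never shows that every nonzero principal ideal falls into one of them, and that is exactly what the implication ``$\Rightarrow$'' needs. Your descent argument supplies this missing step. Like the paper, it tacitly uses that $R$ is commutative, so that $Rx$ is stable under right multiplication by basis elements.

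The descent can be collapsed into a single step, and doing so shows more. Put $N=K_0(S,c)$. Since $N^k=0$, any nonzero ideal $I$ contains some $y\neq 0$ with $yN=0$: take $y\in IN^{j}$, where $IN^{j}\neq 0=IN^{j+1}$. Such a $y$ is not a unit, and every coefficient $\lambda(yb)$, $b\in S$, of its form $P(\beta)$ vanishes. The hypothesis therefore forces $y\in Kz'$, i.e.\ $Kz'\subseteq I$.

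Consequently, under the standing hypothesis every nonzero principal ideal contains $Kz'$. This has three effects:
\begin{itemize}
\item the case you handle by contraposition never occurs, so ``$\Rightarrow$'' is vacuous;
\item $\ker(\lambda)$ contains no nonzero ideal, so $\lambda$ is a Frobenius form and $R$ is Frobenius;
\item $\mathcal{I}_{z'}$ is the family of all nonzero principal ideals.
\end{itemize}
So the hypothesis excludes exactly the non-Frobenius situations the section aims at. For instance, in the trivial-character component of Example~\ref{Ex1}, $y=r^2-s^2$ satisfies $yN=0$ but $y\notin Kz'$. Your argument is sound, but it establishes $Rx\subseteq\ker(\lambda)$ in a case that cannot arise; the direct argument makes this explicit.
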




\section*{Acknowledgments}
The author acknowledges the use of ChatGPT (OpenAI) for assistance with English-language editing.



\bibliographystyle{plain}
\bibliography{ref-Det}

\end{document}